\title{\bf Discrete FEM-BEM coupling with the\\
  Generalized Optimized Schwarz Method}
\author[1,2]{Antonin Boisneault\orcidID{0000-0001-7986-9048} \thanks{Corresponding author: antonin.boisneault@inria.fr}}
\author[2]{Marcella Bonazzoli\orcidID{0000-0002-0284-5643}}
\author[1]{Xavier Claeys\orcidID{0000-0003-0826-6244}}
\author[1]{Pierre Marchand\orcidID{0000-0002-2522-6837}}
\affil[1]{{\small POEMS, CNRS, Inria, ENSTA, Institut Polytechnique de
    Paris, 91120 Palaiseau, France} }
\affil[2]{{\small Inria, Unité de Mathématiques Appliquées, ENSTA, Institut
    Polytechnique de Paris, 91120 Palaiseau, France}}
\date{}
\begin{document}

\maketitle

\begin{abstract}
The present contribution aims at developing a non-overlapping Domain
Decomposition (DD) approach to the solution of acoustic wave propagation
boundary value problems based on the Helmholtz equation, on both bounded
and unbounded domains.  This DD solver, called Generalized Optimized
Schwarz Method (GOSM), is a substructuring method, that is, the unknowns of
an iteration are associated with the subdomains interfaces.  We extend the
analysis presented in~\cite{MR4665035} to a fully discrete setting. We do
not consider only a specific set of boundary conditions, but a whole class
including, e.g., Dirichlet, Neumann, and Robin conditions.  Our analysis
will also cover interface conditions corresponding to a Finite Element
Method - Boundary Element Method (FEM-BEM) coupling. In particular, we
shall focus on three classical FEM-BEM couplings, namely the Costabel,
Johnson-N\'ed\'elec and Bielak-MacCamy couplings.  As a remarkable outcome,
the present contribution yields well-posed substructured formulations of
these classical FEM-BEM couplings for wavenumbers different from classical
spurious resonances. We also establish an explicit relation between the
dimensions of the kernels of the initial variational formulation, the local
problems and the substructured formulation. That relation especially holds
for any wavenumber for the substructured formulation of Costabel FEM-BEM
coupling, which allows us to prove that the latter formulation is
well-posed even at spurious resonances.  Besides, we introduce a
systematically geometrically convergent iterative method for the Costabel
FEM-BEM coupling, with estimates on the convergence speed.
\end{abstract}

\noindent\textbf{Acknowledgements:}{ \small This work is funded by the Inria program “Actions exploratoires” (OptiGPR3D)}

\noindent\textbf{Mathematics Subject Classification:}{ 65R20 \(\cdot\) 65N22 \(\cdot\) 65N38 \(\cdot\) 35J05 \(\cdot\) 65N55}

\section*{Introduction}

In this work, our aim is to design an efficient solver for acoustic wave
propagation modelled by the Helmholtz equation. The objective is to develop
a numerical method that is versatile enough to accommodate heterogeneous
and complex environments, while remaining efficient and tailored to high
performance computing.  Two classes of methods are often used for the
numerical simulation of time-harmonic wave propagation: the finite element
method (FEM), which is versatile and suited to heterogeneous materials, and
the boundary element method (BEM), which performs best on piecewise
homogeneous materials and/or unbounded domains. In order to optimally handle
different types of materials, we seek a numerical algorithm to
systematically couple and benefit from these two methods.

To this end, we introduce a new FEM-BEM coupling strategy based on a substructuring domain decomposition method called \emph{Generalized Optimized Schwarz Method} (GOSM). Substructuring DD methods involve
\begin{itemize}
  \item a non-overlapping partition of the domain of interest,
  \item resolution of local problems with optimized boundary conditions in each subdomain that can be performed in parallel,
  \item an iterative resolution of a substructured problem, that is, a problem on the skeleton (the union of the boundaries of the subdomains).
\end{itemize}
The novelty of our approach lies in the extension to FEM-BEM coupling of recent developments from~\cite{MR4433119,MR4665035}. One of the strengths of this current work is a clear well-posedness and convergence theory.

Research on domain decomposition methods for time-harmonic wave propagation started with a proper choice of transmission condition at the interface between subdomains. In~\cite{Despres1991DDM}, a first convergent DD method was introduced for the Helmholtz equation using impedance transmission conditions, where the impedance coefficient is \(\imath \kappa\) and \(\kappa\) the wavenumber. To accelerate the possibly slow convergence, several approaches have been explored to replace the impedance coefficient by a local transmission operator, see e.g.~\cite{GanderMagoulesEtAl2002OSM,BoubendirAntoineEtAl2012QON}.

Another approach studied in \cite{CollinoGhanemiEtAl2000DDM} is to replace the impedance coefficient with a non-local impedance operator, which ensures provable geometric convergence. A strong requirement for such a fast convergence is the absence of cross-points, i.e.\ points shared by more than two subdomains in the domain partition. All the previous methods are usually called Optimized Schwarz Methods (OSM). To treat cross-points,~\cite{MR4433119,MR4665035} recently introduced the GOSM, which uses a non-local impedance operator, and also a non-local exchange operator \(\Pi\) instead of the usual swap operator between subdomains, used in the OSMs.

Classical FEM-BEM couplings, such as the Costabel~\cite{Costabel1987SMC},
Johnson-Nédélec~\cite{JohnsonNedelec1980CBI}, and
Bielak-MacCamy~\cite{MR700668} couplings, rely on Dirichlet and Neumann
transmission conditions between the FEM and BEM subdomains. Meanwhile, new
FEM-BEM couplings have been developped building on the progress of DD
methods for the Helmholtz equation. For example, the Després impedance
condition has been used in~\cite{BendaliBoubendirEtAl2007FDD},
while~\cite{Caudron2018CFB,CaudronAntoineEtAl2020OWC} employ local
transmission operators as in~\cite{BoubendirAntoineEtAl2012QON}. All the
previous works consider a partition into two subdomains, one bounded FEM
subdomain and one unbounded BEM subdomain. With more than two subdomains,
the additional challenge related to cross-points has not been often studied
in the literature. In~\cite{BoubendirBendaliEtAl2008CNO}, one additional
unknown per cross-point is considered, which results in solving, per
iteration, one additional (sparse) linear system in each FEM subdomain, and
finally a dense linear system (whose size equals the number of
cross-points). Citing~\cite{BoubendirBendaliEtAl2008CNO}, ``the local
problems remain coupled at the cross-points''. In~\cite{MR4779883}, a
proper functional framework dealing with cross-points has been designed,
and new multidomain FEM-BEM formulations have been proposed and analyzed in
a continuous setting, but their practical implementation is not
straightforward.

We propose here a discrete counterpart of the analysis presented in~\cite{MR4665035} and extend it to FEM-BEM coupling. Note that in~\cite{MR4665035}, only bounded domains are considered, and physical conditions on their boundary are weakly imposed via additional boundary unknowns. The classical FEM-BEM couplings are reformulated as~\eqref{IntialBVP6}, that is, a substructured equation of the form \(\Id+\Pi\mS\) where 
\begin{itemize}
  \item \(\Pi\) is the non-local exchange operator mentioned above,
  \item \(\mS\) is a scattering operator, which takes ingoing impedance traces, solves the local problems and returns the outgoing impedance traces. 
\end{itemize}
We prove that the substructured reformulations are well-posed\footnote{\label{note:well_posed}Well-posedness is intended in the classical Hadamard sense (and not in the weaker Fredholm sense).}, when the classical FEM-BEM couplings are. 
Besides, when the considered FEM-BEM coupling satisfies a sign assumption related to physical dissipation (see Equation~\eqref{Assumption2}), \(\Pi \mS\) is a contraction\footnote{\label{note:contraction}The terms \emph{contractive} and \emph{contraction} are employed here as in~\cite{Brezis2011FAS}: the inequality is not necessarily strict.} (see Lemma~\ref{CaracExchangeOp} and Proposition~\ref{ProposistionContractivity}). Thus, the substructured system can be solved using a Richardson solver with guaranteed geometric convergence (see Theorem~\ref{thm:richardson_upper_bound}), even when cross-points are present. The sign assumption~\eqref{Assumption2} is satisfied by the Costabel coupling (Proposition~\ref{prop:costabel_sign_assumption}), but not by the Johnson-Nédélec and Bielak-MacCamy couplings (Proposition~\ref{prop:JN_sign_assumption}), for which a GMRes~\cite{Saad2003IMS} iterative solver can be used instead (when these couplings are well-posed).

Additionally, when the sign assumption~\eqref{Assumption2} is satisfied, an
explicit relation is established between the dimension of the kernels of
the classical variational formulation, the local problems, and the
substructured formulation (see
Theorem~\ref{EquivalenceDimensionResonance}). Note that classical FEM-BEM
couplings can have a non-trivial kernel even when the initial boundary
value problem is well-posed, a phenomenon commonly referred to as
\emph{spurious resonances}, see e.g.~\cite{SchulzHiptmair2022}.

Finally, we explain the implementation details of the GOSM, and illustrate its properties with thorough numerical experiments. Remark that subdomains can be associated either with volume unknowns for FEM subdomains, or with boundary unknowns for BEM subdomains and physical boundary conditions. Thus, we discuss the choice of the impedance operators, and we observe that convergence is improved by picking them according to the nature of the subdomains (FEM vs BEM vs boundary condition). Moreover, we show that \(h\)-robust geometric convergence can be achieved even with cross-points.

The article is structured as follows. After introducing notations in Section~\ref{NotationConventions} and the considered boundary value problems in Section~\ref{sec:pbs}, we recall the classical FEM-BEM couplings and discuss the validity of the sign assumption~\eqref{Assumption2} for their formulations. Section~\ref{sec:discr} presents the discretization framework used throughout the paper, while Section~\ref{sec:ddm} introduces the domain decomposition building blocks, such as the impedance and exchange operators. The GOSM formulation is derived in Section~\ref{sec:problem_reformulation} based on the scattering operator \(\mS\), and its well-posedness is treated in Section~\ref{sec:resonances}. A relation between the infsup constants of the initial variational problem and its substructured reformulation is given in Section~\ref{sec:infsup_estimates}. Section~\ref{sec:general_problems} is about the extension to more general settings, and Section~\ref{sec:numerical_experiments} presents the numerical investigation.

\section{Notation conventions}\label{NotationConventions}

To proceed further into the description and the analysis of the
boundary value problem and the domain decomposition method we wish to
study, we need now to fix a few notational conventions. The present
section is mainly here for reference, as most of the conventions we
adopt are classical. All vector spaces that we are going to consider have $\mathbb{C}$ as a 
scalar field.

\paragraph{Dual spaces} Assuming that $\mH$ is a Banach space equipped with the
norm $\Vert \cdot\Vert_{\mH}$, its topological dual, denoted $\mH^{*}$, 
will systematically be equipped with the norm
\begin{equation*}
  \Vert \varphi\Vert_{\mH^{*}} \coloneq
  \sup_{v\in \mH\setminus\{0\}} \frac{\vert \varphi(v) \vert}{\Vert v\Vert_{\mH}}.
\end{equation*}
The canonical duality pairing will be denoted
$\langle \cdot,\cdot\rangle\colon \mH^{*}\times\mH\to \mathbb{C}$ and
defined by $\langle \varphi, v\rangle\coloneq \varphi(v)$. Although
the space $\mH$ does not appear explicitly in the notation
``$\langle \varphi, v\rangle$'', when such pairing angle brackets
are used, it shall be clear from the context which pair of spaces
$(\mH,\mH^{*})$ is under consideration. We emphasize
that the duality pairings we consider do \emph{not} involve any
complex conjugation. We shall write $\langle v,\varphi\rangle =
\langle \varphi, v\rangle,\ \forall v\in\mH, \varphi\in \mH^*$ indifferently.

\paragraph{Inf-sup constants and adjoints}
Assuming that $\mV$ is another Banach space
equipped with the norm $\Vert \cdot\Vert_{\mV}$,
and $\mL\colon\mH\to \mV$ is a bounded linear map, we shall
refer to its \emph{inf-sup constant} denoted and defined as follows
\begin{equation}\label{DefInfSupCst}
  \infsup_{\mH\to \mV}(\mL)\coloneq\inf_{u\in \mH\setminus\{0\}}
  \frac{\Vert \mL(u)\Vert_{\mV}}{\Vert u\Vert_{\mH}}.
\end{equation}
In the case where $\mL$ is invertible, this inf-sup constant
equals the inverse of the continuity modulus of $\mL^{-1}$.
The inf-sup constant is well-defined even if $\mL$ is not
invertible though. The \emph{adjoint}\footnote{This adjoint operator, which does not involve complex conjugation, is sometimes referred to as \emph{quasi-adjoint}, see e.g.~\cite{ChandlerWildeGrahamEtAl2012NAB}. We follow here the notational convention of~\cite[\S2.6]{Brezis2011FAS} and~\cite[\S4.10]{Rudin19991book}.} of the map $\mL\colon\mH\to \mV$ shall be
defined as the unique bounded linear map $\mL^*\colon\mV^{*}\to \mH^{*}$
satisfying
\begin{equation}\label{DefAdjoint}
  \langle \mL^{*}(p),u\rangle \coloneq 
  \langle p,\mL(u)\rangle 
\end{equation}
for all $p\in \mV^*$ and all $u\in\mH$. When \(V=H\), the map \(\mL\) is said \emph{symmetric} if \(\mL^* = \mL\). Once again, we insist that
no complex conjugation comes into play in~\eqref{DefAdjoint}. The bounded
linear map $\mL$ induces another bounded linear map
$\overline{\mL}\colon\mH\to \mV$ defined by $\overline{\mL}(\overline{u})\coloneq\overline{\mL(u)}$
for all $u\in\mH$.

\paragraph{Annihilator}

For any subset $\mX\subset \mH$, we denote its \emph{annihilator} (or \emph{polar set}) by
\begin{equation}\label{DefPolarSet}
  \mX^{\circ} \coloneq\{ \varphi\in \mH^* \mid \langle \varphi,v\rangle = 0,\ \forall v\in \mX\},
\end{equation}
see, e.g.~\cite[§4.6]{Rudin19991book},~\cite[§1.3]{Brezis2011FAS}
and~\cite[§4.1.4--4.1.5]{BoffiBrezziFortin2013book}. We point out three
important properties.  First, using~\cite[Theorem 2.16]{Brezis2011FAS}, let
\(\mY\) and \(\mZ\) be two closed subspaces in \(\mH\). If \(\mZ+\mY\) is
closed in \(\mH\), then
\begin{align}\label{PropSumOfAnnihilator}
   \mZ^{\circ} + \mY^{\circ} = \left(\mZ \cap \mY\right)^{\circ}.
\end{align}
By applying Definition~\eqref{DefPolarSet} to \(\mrm{Im}(\mL) \subset \mV\), note that for \emph{any} bounded linear map \(\mL:\mH\to\mV\),
\begin{align}\label{eq:polarOfImage}
  \mrm{Im}(\mL)^{\circ} = \mrm{Ker} (\mL^*).
\end{align}
Finally, if \(\mL:\mH\to\mV\) is a bounded linear map with \emph{closed range}, then
\begin{align}\label{ClosedRangeTheorem}
  \mrm{Im}(\mL) = \mrm{Ker} (\mL^*)^{\circ}.
\end{align}

\paragraph{Scalar products} A bounded linear operator $\mT\colon\mH\to \mH^*$
is called \emph{self-adjoint} if $\overline{\mT} = \mT^{*}$ and, in this case
we have $\langle \mT(u),\overline{u}\rangle\in \RR$ for all $u\in\mH$.
It is called \emph{positive definite} if $\langle \mT(u),\overline{u}\rangle
\in (0,+\infty)$ for all $u\in\mH\setminus\{0\}$. If $\mT$ is
both self-adjoint and positive definite, the sesquilinear form
$u,v\mapsto \langle \mT(u),\overline{v}\rangle$ induces a scalar
product over $\mH$ and the associated norm is denoted
\begin{equation*}
  \Vert u\Vert_{\mT}\coloneq\sqrt{\langle \mT(u),\overline{u}\rangle}.
\end{equation*}

\paragraph{Cartesian products} We shall also consider Cartesian products
$\mH_{1}\times \dots\times \mH_{\mJ}$ where each $\mH_{j}$ is a Banach space
equipped with the norm $\Vert\cdot\Vert_{\mH_{j}}$. Then the Cartesian product
shall be equipped with the following canonical norm and duality pairings
\begin{equation*}
  \begin{aligned}
    & \Vert \bv\Vert_{\mH_{1}\times \dots\times \mH_{\mJ}}^{2} \coloneq
    \Vert v_1\Vert_{\mH_{1}}^{2}+\dots +\Vert v_{\mJ}\Vert_{\mH_{\mJ}}^{2}\\
    & \langle \bv,\bq \rangle \coloneq
    \langle v_1,q_1\rangle + \dots + \langle v_\mJ,q_\mJ\rangle
  \end{aligned}
\end{equation*}
for $\bv = (v_1,\dots,v_\mJ), v_j\in\mH_{j}$, and
$\bq = (q_1,\dots,q_{\mJ}), q_{j}\in\mH^{*}_{j}$.
If $\mV_{j},j=1,\dots,\mJ$, is another collection of Banach spaces
and $\mL_{j}\colon\mH_{j}\to \mV_{j}$ are bounded linear maps, we
shall also consider the block-diagonal operator
$\mrm{diag}(\mL_{1},\dots,\mL_{\mJ})$,
mapping $\mH_{1}\times \dots\times \mH_{\mJ}$ onto
$\mV_{1}\times \dots\times \mV_{\mJ}$ and defined,
for $\bv = (v_1,\dots,v_\mJ), v_j\in\mH_{j}$, and
$\bq = (q_1,\dots,q_{\mJ}), q_j\in\mV_{j}^*$, by
\begin{equation*}
  \langle \bq, \mrm{diag}(\mL_{1},\dots,\mL_{\mJ})\,\bv\rangle \coloneq
  \langle q_1,\mL_{1}(v_1)\rangle + \dots +
  \langle q_{\mJ},\mL_{\mJ}(v_{\mJ}) \rangle.
\end{equation*}

\paragraph{Pseudo-inverses} Finally, we shall also need to refer to the Moore-Penrose pseudo-inverse
(or generalized inverse). We recall briefly its definition, and refer
our reader to e.g.~\cite{MR1987382,OriginalMooreRef,MR69793} for more
details. Assume that $(\mH,\Vert\cdot\Vert_{\mH})$ and
$(\mV,\Vert\cdot\Vert_{\mV})$ are two Hilbert spaces and $\mL\colon
\mH\to \mV$ is a bounded linear map with closed range. By definition,
its \emph{Moore-Penrose pseudo-inverse} is the unique bounded linear
operator $\mL^{\dagger}\colon\mV\to \mH$ satisfying the four
conditions
\begin{equation}\label{MoorePenroseProperties}
  \begin{aligned}
    (i) \quad & \mL\cdot\mL^{\dagger}\cdot\mL =\mL\\
    (ii) \quad & \mL^{\dagger}\cdot\mL\cdot\mL^{\dagger} =\mL^{\dagger}\\
    (iii) \quad & \mL^{\dagger}\cdot\mL\colon \mH\to \mH\;\text{is an orthogonal projector}\\
    (iv) \quad & \mL\cdot\mL^{\dagger}\colon \mV\to \mV\;\text{is an orthogonal projector}\\
  \end{aligned}
\end{equation}
The properties above imply in particular that
\begin{subequations}\label{MoorePenroseConsequences}
  \begin{align}
    \mrm{Im}(\mL\cdot\mL^{\dagger}) &= \mrm{Im}(\mL), \label{MoorePenroseRange}\\
    \mrm{Ker}(\mL^{\dagger}\cdot\mL) &= \mrm{Ker}(\mL). \label{MoorePenroseKernel}
  \end{align}
\end{subequations}
In the case where
$\mL\colon\mH\to \mV$ is bounded and surjective, its pseudo-inverse is
characterized by
\begin{equation}\label{CaracPseudoInverse}
  \mL\cdot\mL^{\dagger} = \Id\quad
  \text{and}\quad
  \Vert \mL^{\dagger}(u)\Vert_{\mH} = \min\{\Vert v\Vert_{\mH},\; \mL(v) = u\}.
\end{equation}
Similarly, in the case where $\mL\colon\mH\to \mV$ is bounded and
one-to-one, the pseudo-inverse is characterized as a left inverse
$\mL^{\dagger}\cdot\mL = \Id$ satisfying the optimality property
$\Vert u-\mL\cdot\mL^{\dagger}(u)\Vert_{\mV} = \min\{\Vert
u-\mL(v)\Vert_{\mV},\;v\in \mH\}$.

\section{Problems under consideration}\label{sec:pbs}
In dimension $d=2$ or $3$, let $\Omega\subset \RR^d$ be a polyhedral
bounded open set, and denote $\Gamma \coloneq \partial\Omega$ its
boundary. We shall consider the space of square integrable measurable
functions $\mL^{2}(\Omega)\coloneq\{ v\colon\Omega\to \CC, \Vert
v\Vert_{\mL^{2}(\Omega)}^{2}\coloneq \int_{\Omega}\vert
v(\bx)\vert^{2} d\bx <+\infty\}$, and the Sobolev space
$\mH^{1}(\Omega)\coloneq\{ v\in \mL^{2}(\Omega), \nabla v\in
\mL^{2}(\Omega)\}$ equipped with $\Vert
v\Vert_{\mH^{1}(\Omega)}^{2}\coloneq \Vert \nabla
v\Vert_{\mL^{2}(\Omega)}^{2}+ \tilde{k}^{2}\Vert
v\Vert_{\mL^{2}(\Omega)}^2$, where $\tilde{k}>0$ is some fixed
parameter. We shall also consider $\mH^{1}_0(\Omega)$, which is the
closure of
$\mathscr{C}^{\infty}_{0,c}(\Omega)\coloneq\{\varphi\vert_{\Omega},
\varphi\in \mathscr{C}^{\infty}(\RR^d), \text{with bounded support
}\mrm{supp}(\varphi)\subset \Omega\}$ for $\Vert
\cdot\Vert_{\mH^{1}(\Omega)}$, and the space
$\mH^{1}_{\text{loc}}(\RR^d) \coloneq \{v, \varphi v \in
\mH^{1}(\RR^d) \,\forall \varphi \in
\mathscr{C}^{\infty}_{c}(\RR^d)\}$, where
$\mathscr{C}^{\infty}_{c}(\RR^d)$ is the set of compactly supported
$\mathscr{C}^{\infty}$ functions. Lastly, we consider a source term $f\in
\mL^{2}(\RR^d)$ supported in $\Omega$, and a measurable function 
$\kappa\colon\RR^d\to \CC$ modelling a wavenumber and satisfying
\begin{equation}\label{Assumption1}
  \begin{aligned}
    & \textbf{Assumption:}\\
    & \Im\{\kappa(\bx)^2\}\geq 0 \quad \forall \bx\in\Omega,
  \end{aligned}
\end{equation} 
as well as $\kappa(\bx) = \tilde{k}>0$ for $\bx\in
\RR^d\setminus\Omega$, and $\mathop{\sup\text{ess}}_{\RR^d}\vert \kappa\vert <+\infty$.
We wish to develop a solution strategy for boundary value problems
modelling acoustic wave propagation and taking one of the following
generic forms
\begin{subequations}\label{IntialBVP}
  \begin{align}
  & \begin{aligned}
    & u\in \mH^{1}(\Omega)\;\;\text{and}\;\; -\Delta u-\kappa^{2}u = f\;\;\text{in}\;\Omega\\
    & +\;\text{boundary condition on }\Gamma = \partial\Omega,
  \end{aligned} \label{IntialBVPa}\\
  \notag\\
  & \begin{aligned}
    & u\in \mH^{1}_{\text{loc}}(\RR^d)\;\;\text{and}\;\; -\Delta u-\kappa^{2}u = f\;\;\text{in}\;\RR^d\\
    & +\;\text{Sommerfeld's radiation condition}.
  \end{aligned} \label{IntialBVPb}
  \end{align}
\end{subequations}
Note that in~\eqref{IntialBVPb} $\Gamma = \partial \Omega$ is seen as a fictitious interface on which are imposed Dirichlet and Neumann transmission conditions. Sommerfeld's radiation condition at infinity can be expressed as (see, e.g.~\cite[§2.6.5]{MR1822275}) 
\begin{equation}\label{SommerfeldRadiationCondition}
  \begin{aligned}
    &\textbf{Sommerfeld's condition:}\\
    &\lim_{r\to \infty}\int_{\partial\mathcal{B}_r}\vert \partial_{r}u-\imath k u\vert^{2} \diff s = 0
  \end{aligned}
\end{equation}
where $\mathcal{B}_r$ is the ball centered at $0$ with radius $r$,
$\partial_r$ denotes the radial derivative, and $\diff s$ refers to the
Lebesgue surface measure on $\partial\mathcal{B}_r$.

Let us examine how to write a variational formulation of Problems~\eqref{IntialBVP}. Taking the cue from~\cite{MR4665035} to deal with
boundary value problems of the form~\eqref{IntialBVPa}, we first
 introduce $\mathcal{A}_{\Omega}\colon \mH^{1}(\Omega)\to
\mH^{1}(\Omega)^*$ and $\ell_{\Omega}\in \mH^{1}(\Omega)^*$ that, for
all $u,v\in \mH^{1}(\Omega)$, are defined by
\begin{equation}\label{HelmholtzOperator}
  \begin{aligned}
    & \langle \mathcal{A}_{\Omega}(u),v\rangle \coloneq
    \int_{\Omega}(\nabla u\cdot\nabla v  -\kappa^{2} u v)\,d\bx,\\
    & \langle \ell_{\Omega},v\rangle\coloneq\int_{\Omega}f v \,d\bx.
  \end{aligned}
\end{equation}
To \emph{weakly} impose the boundary condition of \eqref{IntialBVPa}, first we consider the trace space
$\mH^{1/2}(\Gamma)\coloneq\{v\vert_{\Gamma}, v\in \mH^{1}(\Omega)\}$,
equipped with the norm $\Vert
v\vert_{\Gamma}\Vert_{\mH^{1/2}(\Gamma)}\coloneq \inf_{v_0\in
  \mH^{1}_0(\Omega)}\Vert v+v_0\Vert_{\mH^{1}(\Omega)}$. Its dual
shall be denoted $\mH^{-1/2}(\Gamma)\coloneq
\mH^{1/2}(\Gamma)^*$, with $\Vert
p\Vert_{\mH^{-1/2}(\Gamma)}\coloneq \sup_{v\in
  \mH^{1/2}(\Gamma)\setminus\{0\}} \vert \langle p,v\rangle\vert/
\Vert v\Vert_{\mH^{1/2}(\Gamma)}$. Then
we introduce a bounded operator
\begin{equation*}
  \mathcal{A}_\Gamma\colon \mH^{+\frac{1}{2}}(\Gamma)\times \mH^{-\frac{1}{2}}(\Gamma)\to
    \mH^{-\frac{1}{2}}(\Gamma)\times \mH^{+\frac{1}{2}}(\Gamma)
\end{equation*}
whose precise definition depends on the boundary/interface condition we
wish to take into account in~\eqref{IntialBVP}. Several boundary
conditions for~\eqref{IntialBVPa} have been discussed in~\cite{MR4665035}. For example
a Dirichlet boundary condition can be weakly imposed by choosing (see~\cite[Example 3.1]{MR4665035})
\begin{align}\label{DirichletBC}
  \textbf{Dirichlet:}\hspace{1cm} &
  \mathcal{A}_{\Gamma} =
  \left\lbr\begin{array}{cc}
  0 & \Id\\
  \Id & 0
  \end{array}\right\rbr.
\end{align}
A Neumann boundary condition can be weakly imposed by taking (see~\cite[Example 3.2]{MR4665035})
\begin{align}\label{NeumannBC}
  \textbf{Neumann:}\hspace{1cm} &
  \mathcal{A}_{\Gamma} =
  \left\lbr\begin{array}{cc}
  0 & 0\\
  0 & \mathcal{T}_{\Gamma}^{-1}
  \end{array}\right\rbr, 
\end{align}
where $\mathcal{T}_\Gamma\colon \mH^{1/2}(\Gamma)\to
\mH^{-1/2}(\Gamma)$ is the bounded linear map defined
by \(\langle\mathcal{T}_\Gamma(v),\overline{v}\rangle \coloneq \Vert
v\Vert_{\mH^{1/2}(\Gamma)}^{2}\), \(\forall v\in \mH^{1/2}(\Gamma)\). 
Due to~\eqref{Assumption1}, $\Im\{\langle \mathcal{A}_{\Omega}(v),
\overline{v}\rangle\}\leq 0,\ \forall v\in \mH^{1}(\Omega)$ and, as shown 
in~\cite{MR4665035}, a similar property holds for the boundary
operators $\mathcal{A}_{\Gamma}$ in~\eqref{DirichletBC} and~\eqref{NeumannBC}, and for those associated with other classical boundary
conditions (Robin, mixed Dirichlet-Neumann). We state this property as
an assumption that shall be further discussed in the subsequent
analysis
\begin{equation}\label{Assumption2}
  \begin{aligned}
    & \textbf{Assumption:}\\
    & \Im\{\langle \mathcal{A}_{\Gamma}(v,q),(\overline{v},\overline{q})\rangle\}\leq 0\\
    & \forall v\in\mH^{+\frac{1}{2}}(\Gamma),\;\;\forall q\in\mH^{-\frac{1}{2}}(\Gamma).
  \end{aligned}
\end{equation}
As discussed in~\cite{MR4665035}, once the operator
$\mathcal{A}_{\Gamma}$ has been defined in accordance with the
boundary condition to be imposed, the boundary value problem~\eqref{IntialBVPa} can be reformulated into the following variational form
\begin{equation}\label{IntialBVP2}
  \begin{aligned}
    & \text{Find}\;u\in \mH^{1}(\Omega), \;p\in
    \mH^{-\frac{1}{2}}(\Gamma)\;\text{such that}\\
    & \langle \mathcal{A}_{\Omega}(u),v\rangle+\langle
    \mathcal{A}_{\Gamma}(u\vert_{\Gamma},p),
    (v\vert_{\Gamma},q)\rangle = \langle \ell_{\Omega}, v\rangle +
    \langle \ell_{\Gamma},(v\vert_{\Gamma},q)\rangle\\
    & \hspace{5.25cm} \forall v\in \mH^{1}(\Omega), \;
    \forall q\in \mH^{-\frac{1}{2}}(\Gamma),
  \end{aligned}
\end{equation}
where $\ell_{\Omega}\in \mH^1(\Omega)^*$ and $\ell_{\Gamma}\in
\mH^{-\frac{1}{2}}(\Gamma)\times \mH^{+\frac{1}{2}}(\Gamma)$ are known
source terms related to the function $f$ and the right-hand side of the
boundary condition in Problem~\eqref{IntialBVPa}. 

We are going to develop a discrete counterpart of the theory proposed in~\cite{MR4665035}. Choosing $\mathcal{A}_{\Gamma}$ as~\eqref{DirichletBC} or~\eqref{NeumannBC} for~\eqref{IntialBVPa}, two situations can occur: either~\eqref{IntialBVPa}, and thus~\eqref{IntialBVP2}, is well-posed\textsuperscript{\ref{note:well_posed}}, or it is not. The latter can happen for a cavity problem when \(\kappa\) is a resonance. In both cases,~\eqref{IntialBVP2} will be rewritten as a substructured form after decomposing the domain \(\Omega\).

\medskip
In the next section we exhibit other choices of $\mathcal{A}_{\Gamma}$
to reformulate also Problem~\eqref{IntialBVPb} into the generic form~\eqref{IntialBVP2}. This can be achieved by means of FEM-BEM coupling
schemes, which model the Dirichlet and Neumann transmission conditions
on $\Gamma = \partial \Omega$, the Helmholtz equation with constant
wavenumber in $\RR^d\setminus\Omega$, and Sommerfeld's radiation
condition at infinity.

\section{FEM-BEM coupling}\label{FEMBEMCoupling}

We are particularly interested in FEM-BEM coupling and one of the goal
of the present contribution is to design a numerical strategy that
systematically converges, with proven convergence bounds, toward the
solution to a FEM-BEM formulation. In the present section, we 
introduce a few notations related to layer potentials and boundary
integral operators, in order to then define $\mathcal{A}_{\Gamma}$ for
FEM-BEM coupling formulations.

\medskip
Denote $\bx\mapsto \mathscr{G}_{k}(\bx)$ the outgoing \emph{Green kernel} for the Helmholtz operator with constant wavenumber $k>0$. For $d = 3$, this is
$\mathscr{G}_k(\bx)\coloneq \exp(\imath k\vert \bx\vert)/(4\pi\vert \bx\vert)$
and, for $d = 2$, this is $\mathscr{G}_k(\bx)\coloneq i H^{(1)}_{0}(k\vert
\bx\vert)/(4\pi)$, where $z\mapsto H^{(1)}_{0}(z)$ is the $0$-th
order Hankel function of the first kind, see e.g.~\cite[Chap.10]{MR2723248}. For any sufficiently
smooth pair of Dirichlet and Neumann traces $(v,p)$, we define the
\emph{total layer potential operator} by the formula
\begin{equation}\label{LayerPOtentialOperator}
  \begin{aligned}
  \mathcal{G}_{\Gamma}(v,p)(\bx)\coloneq
  \int_{\Gamma} \bigl(\bn(\by)\cdot(\nabla\mathscr{G}_k)(\bx-\by)\,v(\by) & \\
  +\;\mathscr{G}_k(\bx-\by)\,p(\by) \bigr) & \; \diff s(\by),\quad \bx\in \RR^d\setminus\Gamma,
  \end{aligned}
\end{equation}
where $\diff s$ refers to the Lebesgue surface measure on $\Gamma$ and
$\bn\colon\Gamma\to \RR^d$ is the unit vector field normal to $\Gamma$
directed toward the exterior of $\Omega$. The map $(v,p)\mapsto
\mathcal{G}_{\Gamma}(v,p)\vert_{\Omega}$ can be extended by density as
a bounded linear operator $\mH^{1/2}(\Gamma)\times
\mH^{-1/2}(\Gamma)\to \mH^{1}(\Delta,\overline{\Omega})\coloneq\{v\in
\mH^{1}(\Omega),\;\Delta v\in \mL^{2}(\Omega)\}$, see
e.g.~\cite[Thm.3.1.16]{MR2743235} or~\cite[Thm.6.11]{MR1742312}.
Similarly, $(v,p)\mapsto
\mathcal{G}_{\Gamma}(v,p)\vert_{\RR^d\setminus\Omega}$ can be extended
by density as a bounded linear operator $\mH^{1/2}(\Gamma)\times
\mH^{-1/2}(\Gamma)\to
\mH^{1}_{\mrm{loc}}(\Delta,\RR^d\setminus\Omega)$.

\medskip
We stress that, because of the convolutional form of the potential
operator~\eqref{LayerPOtentialOperator} and the properties of the outgoing
Green kernel, for any pair $(v,p)\in \mH^{1/2}(\Gamma)\times\mH^{-1/2}(\Gamma)$,
the function $u(\bx)\coloneq \mathcal{G}_{\Gamma}(v,p)(\bx)$ satisfies
Sommerfeld's condition at infinity~\eqref{SommerfeldRadiationCondition}, see~\cite{Sommerfeld1912,MR29463,MR103705,MR1822275}, as well as the Helmholtz equation with wavenumber $k>0$ in $\RR^d\setminus\Omega$, see e.g.~\cite[§2.4]{CoKr:bookIEM:1983}.

\medskip
Next, we define the \emph{Dirichlet-Neumann
interior trace map} $\gamma_{\Gamma}\colon \mH^{1}(\Delta,\overline{\Omega})\to
\mH^{1/2}(\Gamma)\times \mH^{-1/2}(\Gamma)$ as the unique bounded linear
operator satisfying, for all $\varphi\in \mathscr{C}^{\infty}(\overline{\Omega})
\coloneq\{\varphi\vert_{\Omega},\;\varphi\in\mathscr{C}^{\infty}(\RR^d)\}$,
the formula 
\begin{equation*}
  \gamma_{\Gamma}(\varphi)\coloneq
  (\varphi\vert_{\Gamma}, \bn\cdot\nabla
  \varphi\vert_{\Gamma}).
\end{equation*}
We emphasize that traces are taken from the interior of $\Omega$ here.
Similarly, we define the \emph{Dirichlet-Neumann exterior trace map}
$\gamma_{\Gamma,c}\colon \mH^{1}_{\mrm{loc}}(\Delta,\RR^d\setminus{\Omega})
\to\mH^{1/2}(\Gamma)\times \mH^{-1/2}(\Gamma)$ as the unique bounded
linear map satisfying $ \gamma_{\Gamma,c}(\varphi)\coloneq
(\varphi\vert_{\Gamma}, \bn\cdot\nabla\varphi\vert_{\Gamma})$ for all
$\varphi\in \mathscr{C}^{\infty}(\RR^d\setminus\Omega)$ where, this
time, traces are taken from the exterior of $\Omega$ (with $\bn$ still
directed toward the exterior of $\Omega$).

With Dirichlet-Neumann trace maps, we can form
$\frac{1}{2}(\gamma_{\Gamma}+\gamma_{\Gamma,c})\cdot\mathcal{G}_{\Gamma}\colon
\mH^{1/2}(\Gamma)\times \mH^{-1/2}(\Gamma)\to \mH^{1/2}(\Gamma)\times
\mH^{-1/2}(\Gamma)$, which is commonly decomposed into a $2\times 2$ matrix
of \emph{boundary integral operators}
\begin{equation}\label{BIOP}
  \frac{1}{2}(\gamma_{\Gamma}+\gamma_{\Gamma,c})\cdot\mathcal{G}_{\Gamma} =
  \left\lbr\begin{array}{ll}
    \mathcal{K}_{\Gamma} & \mathcal{V}_{\Gamma}\\
    \mathcal{W}_{\Gamma} & \tilde{\mathcal{K}}_{\Gamma}
  \end{array}\right\rbr
\end{equation}
where the bounded operators $\mathcal{V}_{\Gamma}\colon \mH^{-1/2}(\Gamma)\to \mH^{+1/2}(\Gamma)$ (\emph{single-layer operator}),
$\mathcal{W}_{\Gamma}\colon \mH^{+1/2}(\Gamma)\to \mH^{-1/2}(\Gamma)$ (\emph{hypersingular operator}), 
$\mathcal{K}_{\Gamma}\colon \mH^{+1/2}(\Gamma)\to \mH^{+1/2}(\Gamma)$ (\emph{double layer operator}), and
$\tilde{\mathcal{K}}_{\Gamma}\colon \mH^{-1/2}(\Gamma)\to
\mH^{-1/2}(\Gamma)$ (\emph{adjoint double layer operator}) can simply
be defined so as to comply with~\eqref{BIOP}. Both single-layer and
hypersingular operators are symmetric, while \(\mathcal{K}_{\Gamma}^*
= - \Kprime\).

\medskip
FEM-BEM coupling can be achieved by certain choices of
$\mathcal{A}_{\Gamma}$ involving the operators in~\eqref{BIOP}. These
choices model the Dirichlet and Neumann transmission conditions on
$\Gamma$, the Helmholtz equation with wavenumber $k>0$ in
$\RR^d\setminus\Omega$, and Sommerfeld's
condition~\eqref{SommerfeldRadiationCondition}. Several FEM-BEM couplings exist, and here we shall concentrate on
three of them, which were reviewed in e.g.~\cite{MR3038165}.

\begin{example}[Symmetric Costabel coupling~\cite{Costabel1987SMC,MR1052136}]
  It is classically written as two equations: 
  find $u\in \mH^{1}(\Omega)$, $p\in \mH^{-1/2}(\Gamma)$ such that
  \[
    \begin{aligned}
      \langle \mathcal{A}_{\Omega}(u),v\rangle 
      + \int_{\Gamma} \mathcal{W}_{\Gamma}(u\vert_{\Gamma})\,v\vert_{\Gamma} \,\diff s
      + \int_{\Gamma} (\Id/2+\tilde{\mathcal{K}}_{\Gamma})(p)\,v\vert_{\Gamma} \,\diff s  
      & = \langle \ell_{\Omega},v\rangle \\
      \int_{\Gamma} (\Id/2-\mathcal{K}_{\Gamma})(u\vert_{\Gamma})\,q \,\diff s
      - \int_{\Gamma} \mathcal{V}_{\Gamma}(p)\,q \,\diff s & = 0
    \end{aligned}
  \]
  $\forall v\in \mH^{1}(\Omega)$, $\forall q\in \mH^{-1/2}(\Gamma)$,  
  with $\mathcal{A}_{\Omega}$ and $\ell_{\Omega}$ defined in~\eqref{HelmholtzOperator}. 
  After summing the two equations, it fits formulation~\eqref{IntialBVP2} by taking $\ell_{\Gamma}=0$ and 
  \begin{equation}\label{CostabelCoupling}
    \textbf{Costabel:} \qquad
    \mathcal{A}_{\Gamma} =
    \left\lbr\begin{array}{cc}
    \mathcal{W}_{\Gamma} & \Id/2+\tilde{\mathcal{K}}_{\Gamma}\\
    \Id/2-\mathcal{K}_{\Gamma} & -\mathcal{V}_{\Gamma}
    \end{array}\right\rbr.
  \end{equation}
  Note that \(\mathcal{A}_{\Gamma}^* = \mathcal{A}_{\Gamma}\), hence the name \emph{symmetric} for the Costabel coupling.
\end{example}

\begin{example}[Johnson-N\'ed\'elec coupling~\cite{JohnsonNedelec1980CBI,MR3089444,MR2831059}]
  It is classically written as two equations: 
  find $u\in \mH^{1}(\Omega)$, $p\in \mH^{-1/2}(\Gamma)$ such that
  \[
    \begin{aligned}
      \langle \mathcal{A}_{\Omega}(u),v\rangle 
      + \int_{\Gamma} p\,v\vert_{\Gamma} \, \diff s  & = \langle \ell_{\Omega},v\rangle \\
      \int_{\Gamma} (\Id/2-\mathcal{K}_{\Gamma})(u\vert_{\Gamma})\,q \,\diff s
      - \int_{\Gamma} \mathcal{V}_{\Gamma}(p)\,q \,\diff s & = 0
    \end{aligned}
  \]
  $\forall v\in \mH^{1}(\Omega)$, $\forall q\in \mH^{-1/2}(\Gamma)$,  
  with $\mathcal{A}_{\Omega}$ and $\ell_{\Omega}$ defined in~\eqref{HelmholtzOperator}. 
  After summing the two equations, it fits formulation~\eqref{IntialBVP2} by taking $\ell_{\Gamma}=0$ and 
  \begin{equation}\label{JohnsonNedelecCoupling}
    \textbf{Johnson-N\'ed\'elec:} \qquad 
    \mathcal{A}_{\Gamma} =
    \left\lbr\begin{array}{cc}
    0 & \Id\\
    \Id/2-\mathcal{K}_{\Gamma} & -\mathcal{V}_{\Gamma}
    \end{array}\right\rbr. 
  \end{equation}
\end{example}

\begin{example}[Bielak-MacCamy coupling~\cite{MR700668}] 
  It is classically written as two equations: 
  find $u\in \mH^{1}(\Omega)$, $p\in \mH^{-1/2}(\Gamma)$ such that
  \[
    \begin{aligned}
      \langle \mathcal{A}_{\Omega}(u),v\rangle 
      + \int_{\Gamma} (\Id/2+\tilde{\mathcal{K}}_{\Gamma})(p)\,v\vert_{\Gamma} \,\diff s  
      & = \langle \ell_{\Omega},v\rangle \\
      \int_{\Gamma} u\vert_{\Gamma}\,q \,\diff s
      - \int_{\Gamma} \mathcal{V}_{\Gamma}(p)\,q \,\diff s & = 0
    \end{aligned}
  \]
  $\forall v\in \mH^{1}(\Omega)$, $\forall q\in \mH^{-1/2}(\Gamma)$,  
  with $\mathcal{A}_{\Omega}$ and $\ell_{\Omega}$ defined in~\eqref{HelmholtzOperator}. 
  After summing the two equations, it fits formulation~\eqref{IntialBVP2} by taking $\ell_{\Gamma}=0$ and 
  \begin{equation}\label{BielakMacCamyCoupling}
    \textbf{Bielak-MacCamy:} \qquad
    \mathcal{A}_{\Gamma} =
    \left\lbr\begin{array}{cc}
    0   & \Id/2+\tilde{\mathcal{K}}_{\Gamma} \\
    \Id & -\mathcal{V}_{\Gamma}
    \end{array}\right\rbr.
  \end{equation}
\end{example}

It is natural to ask whether the FEM-BEM couplings above comply with the imaginary part sign assumption~\eqref{Assumption2}. As a matter of fact, only
the Costabel coupling fits this hypothesis.
\begin{prop}\label{prop:costabel_sign_assumption}\quad\\
  Operator~\eqref{CostabelCoupling} for the Costabel coupling 
  fulfills Assumption~\eqref{Assumption2}.
\end{prop}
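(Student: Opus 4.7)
The plan is to realize the Costabel operator as a simple combination of traces of the layer potential $u := \mathcal{G}_\Gamma(v, q)$, and then reduce the imaginary part of the quadratic form to a radiated-energy integral that is nonnegative by Sommerfeld's condition.

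First, I set $u := \mathcal{G}_\Gamma(v, q)$ on $\RR^d \setminus \Gamma$, and denote by $(\phi^{-}, \lambda^{-}) := \gamma_\Gamma u$ and $(\phi^{+}, \lambda^{+}) := \gamma_{\Gamma, c} u$ its interior and exterior Dirichlet-Neumann traces. By the properties of the outgoing Green kernel recalled right after~\eqref{LayerPOtentialOperator}, $u$ satisfies $-\Delta u - \tilde{k}^2 u = 0$ in both $\Omega$ and $\RR^d \setminus \overline{\Omega}$, and its exterior restriction satisfies Sommerfeld's condition~\eqref{SommerfeldRadiationCondition}. Combining the decomposition~\eqref{BIOP} with the standard jump relations for $\mathcal{G}_\Gamma$, the four traces are obtained as explicit affine combinations of $(v, q)$, together with the jump identities $v = \phi^- - \phi^+$ and $q = \lambda^- - \lambda^+$. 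A direct comparison with~\eqref{CostabelCoupling} then yields the key identification
\[
  \mathcal{A}_\Gamma(v, q) \;=\; (\lambda^{-},\; -\phi^{+}).
\]

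Next, I substitute the jump identities into $\langle \mathcal{A}_\Gamma(v, q), (\overline{v}, \overline{q})\rangle = \langle \lambda^-, \overline{v}\rangle - \langle \phi^+, \overline{q}\rangle$ and expand. Among the four resulting terms, the two ``mixed'' cross-terms $-\langle \lambda^-, \overline{\phi^+}\rangle$ and $-\langle \phi^+, \overline{\lambda^-}\rangle$ are complex conjugates of each other (using $\overline{\langle \psi, \overline{w}\rangle} = \langle \overline{\psi}, w\rangle$ together with the symmetry of the duality pairing), so they sum to a real quantity and drop out of the imaginary part. What remains is
\[
  \Im\, \langle \mathcal{A}_\Gamma(v, q), (\overline{v}, \overline{q})\rangle
  \;=\; \Im\, \langle \lambda^{-}, \overline{\phi^{-}}\rangle \;-\; \Im\, \langle \lambda^{+}, \overline{\phi^{+}}\rangle.
\]

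To conclude, I apply Green's first identity separately in $\Omega$ and in $\mathcal{B}_R \setminus \overline{\Omega}$, then let $R \to \infty$. Since $\tilde{k}$ is real, the interior identity gives $\langle \lambda^{-}, \overline{\phi^{-}}\rangle = \int_\Omega (|\nabla u|^2 - \tilde{k}^2 |u|^2) \diff \bx \in \RR$, killing the first term above. The exterior identity combined with Sommerfeld's condition~\eqref{SommerfeldRadiationCondition} gives the classical radiation relation $\Im\, \langle \lambda^{+}, \overline{\phi^{+}}\rangle = \tilde{k} \lim_{R \to \infty} \int_{\partial \mathcal{B}_R} |u|^2 \diff s \ge 0$. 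Putting these together yields $\Im\, \langle \mathcal{A}_\Gamma(v, q), (\overline{v}, \overline{q})\rangle = -\Im\, \langle \lambda^{+}, \overline{\phi^{+}}\rangle \le 0$, which is exactly the inequality in~\eqref{Assumption2}.

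The main calculational obstacle is careful sign bookkeeping in the jump relations for the paper's convention~\eqref{LayerPOtentialOperator}: a single sign error in the identification $\mathcal{A}_\Gamma(v, q) = (\lambda^{-}, -\phi^{+})$ would flip the direction of the final inequality, and the bookkeeping has to be done with care since~\eqref{LayerPOtentialOperator} uses $(\nabla \mathscr{G}_k)(\bx - \by)$ rather than the more conventional $\nabla_y \mathscr{G}_k(\bx - \by)$.
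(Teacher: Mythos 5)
Your proof is correct, and it reaches the conclusion by a more self-contained route than the paper. The paper's proof is purely algebraic: it introduces the skew pairing $\lbr(u,p),(v,q)\rbr_{\Gamma}=\langle u,q\rangle-\langle v,p\rangle$ and the averaged trace operator $\mathfrak{A}_{\Gamma}=\tfrac12(\gamma_{\Gamma}+\gamma_{\Gamma,c})\cdot\mathcal{G}_{\Gamma}$, checks the identity $\langle \mathcal{A}_{\Gamma}(u,p),(v,q)\rangle=-\lbr\mathfrak{A}_{\Gamma}(u,p),(v,q)\rbr_{\Gamma}+\tfrac12(\langle u,q\rangle+\langle v,p\rangle)$, observes that the last term has real value at $(v,q)=(\overline{u},\overline{p})$, and then \emph{cites} the known sign property $\Im\{\lbr\mathfrak{A}_{\Gamma}(u,p),(\overline{u},\overline{p})\rbr_{\Gamma}\}\geq 0$ from the literature. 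You instead identify $\mathcal{A}_{\Gamma}(v,q)=(\lambda^{-},-\phi^{+})$ directly from the jump relations for $\mathcal{G}_{\Gamma}$ (I checked that this identification is consistent with the paper's sign conventions in~\eqref{LayerPOtentialOperator} and~\eqref{BIOP}, and with the representation-formula convention that makes $(\gamma_{\Gamma}-\gamma_{\Gamma,c})\mathcal{G}_{\Gamma}=\Id$), and then you re-derive the sign property from scratch: the cross-terms cancel in the imaginary part exactly as you say, the interior Green identity makes $\langle\lambda^{-},\overline{\phi^{-}}\rangle$ real since $\tilde{k}$ is real, and the exterior Green identity plus Sommerfeld's condition gives $\Im\langle\lambda^{+},\overline{\phi^{+}}\rangle\geq 0$. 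In other words, you have unpacked and proved the lemma that the paper merely invokes ([MR1822275, Lemma~3.2.1]); what your version buys is independence from that citation at the cost of the sign bookkeeping you rightly flag, plus the (minor, standard) care needed to justify Green's identity for $\mH^{1}(\Delta,\overline{\Omega})$ functions and the limit $R\to\infty$. Both arguments ultimately rest on the same physical fact, nonnegativity of the radiated energy.
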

\begin{proof}
For the sake of brevity, set $\lbr(u,p),(v,q)\rbr_{\Gamma}\coloneq \langle u,q\rangle-\langle
v,p\rangle$ and $\mathfrak{A}_{\Gamma}\coloneq
\frac{1}{2}(\gamma_{\Gamma}+\gamma_{\Gamma,c})\cdot\mathcal{G}_{\Gamma}$,
see~\eqref{BIOP}.
Then, with $\mathcal{A}_{\Gamma}$ defined by~\eqref{CostabelCoupling},
observe that $\langle \mathcal{A}_{\Gamma}(u,p),(v,q)\rangle =
-\lbr \mathfrak{A}_{\Gamma}(u,p),(v,q)\rbr_{\Gamma} +
(\langle u,q\rangle+\langle v,p\rangle)/2$. This implies 
$\Im\{\langle \mathcal{A}_{\Gamma}(u,p),(\overline{u},\overline{p})\rangle\} =
-\Im\{\lbr \mathfrak{A}_{\Gamma}(u,p),(\overline{u},\overline{p})\rbr_{\Gamma}\}$ because $\langle u, \overline{p} \rangle + \langle \overline{u}, p \rangle = \langle u, \overline{p} \rangle + \overline{\langle u, \overline{p} \rangle} = 2 \Re \{ \langle u, \overline{p} \rangle\}$. 
Finally, according to~\cite[Proposition A.2]{MR4779883} or~\cite[Lemma 3.2.1]{MR1822275},
$\Im\{\lbr \mathfrak{A}_{\Gamma}(u,p),(\overline{u},\overline{p})
\rbr_{\Gamma}\}\geq 0$ for all $(u,p)\in \mH^{1/2}(\Gamma)\times \mH^{-1/2}(\Gamma)$.
This concludes the proof. 
\end{proof}

\begin{prop}\label{prop:JN_sign_assumption}\quad\\
  Operators~\eqref{JohnsonNedelecCoupling} and~\eqref{BielakMacCamyCoupling} for the Johnson-N\'ed\'elec and
  Bielak-MacCamy couplings do \emph{not} fulfill
  Assumption~\eqref{Assumption2}.
\end{prop}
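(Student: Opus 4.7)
I propose to construct explicit counterexamples, that is, real-valued test pairs $(v,q)\in \mH^{+1/2}(\Gamma)\times \mH^{-1/2}(\Gamma)$ making the imaginary part strictly positive.

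The first step is a direct computation from~\eqref{JohnsonNedelecCoupling} and~\eqref{BielakMacCamyCoupling}. Setting $\alpha := \langle q,\overline{v}\rangle$ and using $\langle v,\overline{q}\rangle = \overline{\alpha}$ (which follows from $\overline{q}(v)=\overline{q(\overline{v})}$ as in Section~\ref{NotationConventions}), the cross-term combinations $\alpha + \tfrac{1}{2}\overline{\alpha}$ (JN) and $\tfrac{1}{2}\alpha + \overline{\alpha}$ (BM) have imaginary parts $\pm\tfrac{1}{2}\Im\{\alpha\}$. This yields
\begin{align*}
\Im\{\langle \mathcal{A}_{\Gamma}^{\mrm{JN}}(v,q),(\overline{v},\overline{q})\rangle\} &= \tfrac{1}{2}\Im\{\langle q,\overline{v}\rangle\} - \Im\{\langle \K v,\overline{q}\rangle\} - \Im\{\langle \V q,\overline{q}\rangle\},\\
\Im\{\langle \mathcal{A}_{\Gamma}^{\mrm{BM}}(v,q),(\overline{v},\overline{q})\rangle\} &= -\tfrac{1}{2}\Im\{\langle q,\overline{v}\rangle\} + \Im\{\langle \Kprime q,\overline{v}\rangle\} - \Im\{\langle \V q,\overline{q}\rangle\}.
\end{align*}
The $\V$-term has the ``good'' sign ($\geq 0$, by the same radiation argument invoked in the proof of Proposition~\ref{prop:costabel_sign_assumption}) and contributes $\leq 0$; the off-diagonal $\K$- and $\Kprime$-terms are indefinite.

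The second step is a scaling argument. For real-valued $v,q$, the term $\Im\{\langle q,\overline{v}\rangle\}$ vanishes. Rescaling $q\mapsto tq$ with $t\in \RR$ in the Johnson--Nédélec expression then gives
\[
\Im\{\langle \mathcal{A}_{\Gamma}^{\mrm{JN}}(v,tq),(\overline{v},t\overline{q})\rangle\} = -t\,\Im\{\langle \K v,q\rangle\} - t^{2}\Im\{\langle \V q,q\rangle\},
\]
and analogously for Bielak--MacCamy with $+t\,\Im\{\langle \Kprime q,v\rangle\}$ in place of $-t\,\Im\{\langle \K v,q\rangle\}$. The polynomial $t\mapsto \pm t\,I_{1} - t^{2} I_{2}$ (with $I_{2}\geq 0$) attains a strictly positive maximum whenever $I_{1}\neq 0$: at $t = \pm I_{1}/(2 I_{2})$ if $I_{2}>0$, and for any $t$ of appropriate sign if $I_{2}=0$. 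The proof therefore reduces to exhibiting real pairs with $\Im\{\langle \K v,q\rangle\}\neq 0$ (JN) and $\Im\{\langle \Kprime q,v\rangle\}\neq 0$ (BM).

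This last step follows from the non-triviality of the ``imaginary parts'' of $\K$ and $\Kprime$. For a generic real $v$, the function $\Im\{\K v\}$ is not identically zero on $\Gamma$, because the outgoing Helmholtz Green kernel has a non-trivial imaginary part ($\Im\{\mathscr{G}_{k}(z)\} = \sin(k\vert z\vert)/(4\pi\vert z\vert)$ in $d=3$, analogous in $d=2$), which rules out $\Im\{\K\}$ being the zero operator. Choosing then $q := \Im\{\K v\}$ gives $\int_{\Gamma}\Im\{\K v\}\, q\,\diff s = \int_\Gamma (\Im\{\K v\})^{2}\,\diff s > 0$; the construction for $\Kprime$ is symmetric. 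The main obstacle I anticipate is rigorously justifying this non-vanishing on the polyhedral $\Gamma$ under consideration: a degenerate choice such as $(v,q)$ being the Cauchy data of an interior or exterior Helmholtz solution would, by the Calderón identities, make both of the above expressions vanish identically, so the construction must carefully avoid such pairs.
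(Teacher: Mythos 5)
Your overall strategy is structurally the same as the paper's: isolate the off-diagonal term, use that $\Im\{\langle \V q,\overline{q}\rangle\}\geq 0$, and exploit the linear-versus-quadratic scaling in $q$ so that the off-diagonal contribution dominates the $\V$-term for a suitable scaling parameter. Your first two steps (the computation of the imaginary part and the optimization of $t\mapsto \pm t\,I_1 - t^2 I_2$) are correct and match the role of the parameter $\alpha$ in the paper's proof.

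The genuine gap is exactly the one you flag yourself: after restricting to \emph{real} pairs $(v,q)$ to kill the term $\Im\{\langle q,\overline{v}\rangle\}$, your argument hinges on exhibiting a real $v$ with $\Im\{\K v\}\not\equiv 0$, and the justification offered (``the outgoing Green kernel has a non-trivial imaginary part'') does not by itself rule out that the operator $\Im\{\K\}$ vanishes: the double-layer kernel is $\bn(\by)\cdot\nabla\mathscr{G}_k(\bx-\by)$, so one must check that the smooth kernel $\bn(\by)\cdot\nabla\Im\{\mathscr{G}_k\}(\bx-\by)$ is not identically zero on $\Gamma\times\Gamma$ (it is not, e.g.\ because for $\bx,\by$ on non-coplanar faces of the polyhedron one has $\bn(\by)\cdot(\bx-\by)\neq 0$ while the radial factor vanishes only at discrete radii — but this needs to be said). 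The paper sidesteps this entirely by working with complex test pairs: it picks $\phi$ with $(\Id/2+\K)\phi\neq 0$ — which is immediate from the Fredholm alternative, since $\mrm{Ker}(\Id/2+\K)$ is finite-dimensional — and sets $p=\imath\alpha\,\T(\Id/2+\K)\phi$ for a real positive definite $\T$, so that the off-diagonal term contributes $\imath\alpha\lVert(\Id/2+\K)\phi\rVert_{\T}^2$, whose imaginary part is strictly positive by construction. In other words, the factor $\imath$ in the definition of $p$ converts a manifestly nonzero real quantity into the needed positive imaginary part, whereas your restriction to real densities forces you to prove a separate (true but unproven) non-degeneracy statement about $\Im\{\K\}$. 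Your proposal is therefore a viable but incomplete variant; to close it you must either supply the kernel-non-vanishing argument above or adopt the complex Ansatz $p\propto\imath\,\T(\Id/2+\K)\phi$.
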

\begin{proof}
  We construct a pair \((\Phi, P) \in \Hdemi \times \mH^{-1/2}(\Gamma)\)
  such that \( \Im\{\langle\mathcal{A}_{\Gamma}(\Phi,P),
  (\overline{\Phi,P})\rangle\} > 0\) in the case of the 
  Johnson-Nédélec coupling. A similar construction can be made for the 
  Bielak-MacCamy coupling. For all \(\phi \in \Hdemi\), \(p \in
  \mH^{-1/2}(\Gamma)\)
  \begin{align*}
      \langle \mathcal{A}_{\Gamma}(\phi,p), \overline{(\phi,p)} \rangle 
      &= \langle p, \overline{\phi} \rangle +\langle (\Id/2 - \K) \phi - \V p, \overline{p} \rangle \\
      &= \langle p, \overline{\phi} \rangle + \langle \phi, \overline{p} \rangle -
      \langle (\Id/2 + \K) \phi, \overline{p} \rangle - \langle \V p, \overline{p} \rangle \\
      &= \langle p, \overline{\phi} \rangle + \overline{\langle \overline{\phi}, p \rangle} -
      \langle (\Id/2 + \K) \phi, \overline{p} \rangle - \langle \V p, \overline{p} \rangle \\
      &= 2\Re\{\langle p, \overline{\phi} \rangle\} - \langle (\Id/2 + \K) \phi, \overline{p} \rangle - \langle \V p, \overline{p} \rangle.
  \end{align*}
  According to~\cite[Lemma~3.1]{EnglederSteinbach2007MBI},
  \(\Im\{\langle \V p, \overline{p} \rangle\} \geq 0\). Our goal is
  then to find a pair $(\phi,p)$ such that \(-\Im \{\langle (\Id/2 + \K) \phi, \overline{p}
  \rangle\} > \Im\{\langle \V p, \overline{p} \rangle\} \).
  Consider any invertible real positive definite operator \(\T\colon\Hdemi
  \to \mH^{-1/2}(\Gamma)\).  According to the Fredholm alternative, \(\mrm{dim} \, \mrm{Ker}(\Id/2 + \K)<+\infty\), so $\mrm{Ker}(\Id/2 +
  \mathcal{K}_{\Gamma})\neq \Hdemi$ and there exists $\phi\in \Hdemi$ satisfying
  $\lVert (\Id/2 + \mathcal{K}_{\Gamma}) \phi \rVert_{\mT}>0$.
  Let $\alpha>0$ be some real parameter whose value will be
  chosen appropriately below, and set $p_{\alpha}\coloneqq \alpha p$
  with \( p\coloneqq\imath \T (\Id/2 + \mathcal{K}_{\Gamma}) \phi\). We obtain
  \begin{equation*}
      \langle\mathcal{A}_{\Gamma}(\phi,p_{\alpha}),( \overline{\phi, p_{\alpha}}) \rangle 
      = 2\Re\{\langle p_{\alpha}, \overline{\phi} \rangle\} +
      \imath\, \alpha \lVert (\Id/2 + \mathcal{K}_{\Gamma}) \phi \rVert_{\mT}^2 - \alpha^{2}\langle  \V p,\overline{p} \rangle.
  \end{equation*}
  Hence, \(\Im\{\langle\mathcal{A}_{\Gamma}(\phi,p_{\alpha}),
  (\overline{\phi, p_{\alpha}}) \rangle\} = \alpha \lVert (\Id/2 +
  \mathcal{K}_{\Gamma}) \phi \rVert_{\mT}^2 - \alpha^{2}\Im\{\langle
  \overline{p}, \V p \rangle\}\). If $\Im\{\langle \overline{p}, \V p
  \rangle\} = 0$, we can choose $\alpha = 1$. Otherwise, $\Im\{\langle
  \overline{p}, \V p \rangle\} > 0$ and we can choose
  $\alpha>0$ satisfying $\alpha < \lVert (\Id/2 +
  \mathcal{K}_{\Gamma}) \phi \rVert_{\mT}^2/\Im\{\langle \overline{p},
  \V p \rangle\}$. Then \((\Phi, P) \coloneqq (\phi, p_{\alpha})\)
  yields a counterexample of~Assumption~\eqref{Assumption2}.
\end{proof}

In the following, we are going to develop a discrete counterpart of the
theory in~\cite{MR4665035} that will cover in particular the case where
\(\mathcal{A}_{\Gamma}\) yields one of the previous FEM-BEM couplings, to
reformulate Problem~\eqref{IntialBVPb}. However, even
when~\eqref{IntialBVPb} is well-posed, the corresponding variational
formulation~\eqref{IntialBVP2} can be ill-posed at some wavenumbers, called
\emph{spurious resonances}, due to the introduction of boundary integral
operators, see e.g.~\cite{SchulzHiptmair2022}. Thus, several situations can
occur:
\begin{itemize}
  \item \eqref{IntialBVP2} is well-posed, then~\eqref{IntialBVP2} will be
    rewritten as a substructured form with a trivial kernel. It can be the
    case of the three
    couplings~\eqref{CostabelCoupling},~\eqref{JohnsonNedelecCoupling}
    and~\eqref{BielakMacCamyCoupling} if the wavenumber \(\tilde{k}\) does
    not match a spurious resonance.
  \item Assuming~\eqref{Assumption2} and using Moore-Penrose
    pseudo-inverses,~\eqref{IntialBVP2} will be rewritten as a
    substructured form with a trivial kernel, even when the operator
    attached to~\eqref{IntialBVP2} admits a non-trivial kernel because of a
    spurious resonance. In this situation, the substructured formulation
    takes the form of ``identity + contraction''. This is the case of the
    Costabel coupling~\eqref{CostabelCoupling}.
\end{itemize}
These two situations are not mutually exclusive: for instance, the Costabel coupling~\eqref{CostabelCoupling} falls into both when the wavenumber is not a spurious resonance. 

\begin{rem}\quad\\
  If the Costabel FEM-BEM coupling is applied, then for any value of
  $\tilde{k}$ we can reconstruct a solution of Problem~\eqref{IntialBVPb}
  from a solution of the variational form~\eqref{IntialBVP2}.  If the
  Johnson-Nédélec or Bielak-MacCamy FEM-BEM couplings are applied, then we
  can reconstruct a solution of Problem~\eqref{IntialBVPb} from a solution
  of the variational form~\eqref{IntialBVP2} only when \(\tilde{k}\) does
  not match a spurious resonance.
\end{rem}

\section{Discretization}
\label{sec:discr}

Now we introduce notations related to the discretization of problems
of the form~\eqref{IntialBVP2}. We shall assume that the polyhedral
domain $\Omega\subset \RR^d$ is covered by a $d$-dimensional regular
simplicial mesh $\mathcal{T}_h^{d}(\Omega)$, i.e.~a
collection of $d$-dimensional simplices:
\begin{equation*}
  \overline{\Omega} = \cup\{\;\overline{\tau},\; \tau\in
  \mathcal{T}_h^{d}(\Omega)\;\}.
\end{equation*}
Any polyhedral $d$-dimensional subdomain $\Theta\subset \Omega$ will
be said to \emph{conform} to this mesh if there exists a
sub-collection $\mathcal{T}_h^{d}(\Theta)\subset
\mathcal{T}_h^{d}(\Omega)$ such that $\overline{\Theta} = \cup_{\tau\in
  \mathcal{T}_h^{d}(\Theta)}\overline{\tau}$. Let us denote
$\mathcal{T}_h^{d-1}(\Omega)$ the collection of $(d-1)$-dimensional
simplices that are faces of elements of $\mathcal{T}_h^{d}(\Omega)$.
Any $(d-1)$-dimensional set $\Upsilon\subset \overline{\Omega}$ is
said to \emph{conform} to $\mathcal{T}_h^{d}(\Omega)$ if there exists a
subcollection of faces $\mathcal{T}_h^{d-1}(\Upsilon)\subset
\mathcal{T}_h^{d-1}(\Omega)$ such that $\overline{\Upsilon} =
\cup_{\tau\in \mathcal{T}_h^{d-1}(\Upsilon)}\overline{\tau}$. In
particular, the boundary $\partial\Theta$ of any conforming
$d$-dimensional subset $\Theta\subset \Omega$ is conforming. Next we
consider the space of $\mathbb{P}_k$-Lagrange finite element functions
\begin{equation*}
  \Vh(\Omega) \coloneq \{ v\in \mathscr{C}^{0}(\overline{\Omega}),\;
  v\vert_{\tau}\in \mathbb{P}_k(\tau),
  \;\forall\tau\in\mathcal{T}_h^d(\Omega)\}.
\end{equation*}
We have $\Vh(\Omega)\subset \mH^{1}(\Omega)$. For any
$d$-dimensional or $(d-1)$-dimensional conforming subset
$\Theta\subset \Omega$, we consider the corresponding
$\mathbb{P}_k$-Lagrange finite element space $\Vh(\Theta) \coloneq \{
v\vert_{\Theta},\; v\in \Vh(\Omega)\}$. We will also need to consider
the spaces
\begin{equation*}
  \begin{aligned}
    & \Qh(\Gamma)\subset \mH^{-1/2}(\Gamma),\;\; \mrm{dim}\, \mQ_h(\Gamma) <+\infty,\\
    & \Vh(\Omega,\Gamma)\coloneq \Vh(\Omega)\times\Qh(\Gamma).
  \end{aligned}
\end{equation*}
The subsequent analysis does not need any further assumption on
$\mQ_h(\Gamma)$. The finite dimensional space $\Vh(\Omega,\Gamma)$ yields a Galerkin approximation of
$\mH^{1}(\Omega)\times\mH^{-1/2}(\Gamma)$, which leads to the operator
$\mA_{\Omega\times \Gamma}\colon \Vh(\Omega,\Gamma)\to\Vh(\Omega,\Gamma)^*$ defined by 
\begin{equation*}
  \begin{aligned}
  \langle \mA_{\Omega\times \Gamma}(u,p),(v,q)\rangle \coloneq
  \langle \mathcal{A}_{\Omega}(u),v\rangle+\langle
  \mathcal{A}_{\Gamma}(u\vert_{\Gamma},p),
  (v\vert_{\Gamma},q)\rangle,&\\
  \forall (u,p), (v,q) \in \Vh(\Omega,\Gamma).&\\
  \end{aligned}
\end{equation*}
The operator $\mA_{\Omega\times\Gamma}$ is simply the matrix stemming
from the Galerkin discretization of~\eqref{IntialBVP2}. Similarly, we
consider $\ell_{\Omega\times\Gamma}\in \Vh(\Omega,\Gamma)^*$ defined
by $\langle \ell_{\Omega\times\Gamma},(v,q)\rangle \coloneq \langle
\ell_{\Omega},v\rangle + \langle
\ell_\Gamma,(v\vert_{\Gamma},q)\rangle$ for all $v\in \Vh(\Omega)$,
$q\in\Qh(\Gamma)$. Leaving~\eqref{IntialBVP2} apart, from now on we
shall focus on the discrete problem
\begin{equation}\label{IntialBVP3}
  \begin{aligned}
    & (u,p)\in \Vh(\Omega,\Gamma)\;\text{and}\\
    & \mA_{\Omega\times \Gamma}(u,p) = \ell_{\Omega\times\Gamma}.
  \end{aligned}
\end{equation}
One of our goals is to develop a discrete
counterpart of the theory presented in~\cite{MR4665035}, and the
starting point of this analysis will be~\eqref{IntialBVP3}.

\medskip
We do not discard the possibility of a non-trivial kernel
$\mrm{Ker}(\mA_{\Omega\times\Gamma})\neq \{0\}$. If the wavenumber
$\kappa(\bx)$ is real-valued, this may occur for example when
considering~\eqref{HelmholtzOperator}-\eqref{DirichletBC}-\eqref{IntialBVP3}
or~\eqref{HelmholtzOperator}-\eqref{NeumannBC}-\eqref{IntialBVP3}. This can
also occur with FEM-BEM coupling,
i.e.~\eqref{HelmholtzOperator}-\eqref{CostabelCoupling}-\eqref{IntialBVP3}
and~\eqref{HelmholtzOperator}-\eqref{JohnsonNedelecCoupling}-\eqref{IntialBVP3},
when the wavenumber matches a spurious resonance. See the discussion at the end
of Sections~\ref{sec:pbs} and~\ref{FEMBEMCoupling}.

\section{Domain decomposition}\label{sec:ddm}

We now subdivide the domain $\Omega$ into a finite
collection of non-overlapping polyhedra $\overline{\Omega} =
\overline{\Omega}_1\cup \dots\cup \overline{\Omega}_{J}$ with
$\Omega_j\cap \Omega_k = \emptyset$ for $j \neq k$, and assume that
each $\Omega_j$ conforms with the mesh
$\mathcal{T}_h(\Omega)$. 
In practice, the
generation of such a partition can easily be automated with the help
of a graph partitioner.
In the context of FEM-BEM coupling, already
the case $J=1$ appears relevant. 
We shall denote $\Gamma_j\coloneq\partial\Omega_j$
and $\Sigma \coloneq \Gamma \cup \Gamma_1\cup \dots\cup \Gamma_{J}$ the \emph{skeleton} of the subdomain partition. 
In line with the notation of Section~\ref{sec:discr}, we recall that $\Vh(\Gamma) \coloneq \{v\vert_{\Gamma}, \,v \in \Vh(\Omega)\}$, $\Vh(\Sigma) \coloneq \{v\vert_{\Sigma}, \,v \in \Vh(\Omega)\}$, $\Vh(\Gamma_j) \coloneq \{v\vert_{\Gamma_j}, \,v \in \Vh(\Omega)\}$, and $\Vh(\Omega_j) \coloneq \{v\vert_{\Omega_j}, \,v \in \Vh(\Omega)\}$. 
Next, we introduce function spaces
adapted to the multi-domain setting:
\begin{equation*}
  \begin{aligned}
    & \Vh^{\textsc{b}}(\Gamma)\coloneq \Vh(\Gamma)\times \Qh(\Gamma)\\
    & \mbV_h(\Omega) \coloneq \Vh^{\textsc{b}}(\Gamma)\times  \Vh(\Omega_1)\times\cdots\times \Vh(\Omega_J)\\
    & \mbV_h(\Sigma) \coloneq \Vh(\Gamma)\times  \Vh(\Gamma_1)\times\cdots\times \Vh(\Gamma_J).
  \end{aligned}
\end{equation*}
The space $\mbV_h(\Sigma)$ consists of independent tuples of Dirichlet traces on the boundary
of each subdomain, and we shall refer to it as \textit{Dirichlet multi-trace space}.
Then, we introduce trace operators, beginning with
$\mathcal{B}\colon \Vh(\Omega,\Gamma)\to \Vh(\Sigma)$ defined by $\mathcal{B}(v,q)\coloneq
v\vert_{\Sigma}$. 
We also consider a multi-domain boundary trace
operator $\mB\colon \mbV_h(\Omega)\to \mbV_h(\Sigma)$ that acts
subdomain-wise and is block-diagonal:
\begin{equation*}
  \begin{aligned}
    & \mB \coloneq \mrm{diag}(\mB_\Gamma,\mB_{\Omega_1},\dots,\mB_{\Omega_J})\\
    & \mB_\Gamma(v,q)\coloneq v\quad\text{and}\quad \mB_{\Omega_j}(u)\coloneq u\vert_{\Gamma_j}.
  \end{aligned}
\end{equation*}
In practice, the trace operators $\mathcal{B},\mB$ are made of Boolean matrices that
keep track of the correspondence between volume degrees of freedom and
boundary degrees of freedom. We also need restriction operators
$\mathcal{R}\colon \Vh(\Omega,\Gamma)\to \mbV_h(\Omega)$ and 
$\mR\colon \Vh(\Sigma)\to \mbV_h(\Sigma)$ defined by 
\begin{equation}\label{DomainRestrictionOperators}
  \begin{aligned}
    & \mathcal{R}(v,q) \coloneq ((v\vert_{\Gamma},q),v\vert_{\Omega_1},\dots, v\vert_{\Omega_J})
    & v\in\Vh(\Omega), q\in \Qh(\Gamma) \\
    & \mR(u)  \coloneq (u\vert_{\Gamma},u\vert_{\Gamma_1},\dots, u\vert_{\Gamma_J})
    & u\in \Vh(\Sigma).
  \end{aligned}
\end{equation}
Here again, the restriction operators $\mathcal{R},\mR$ reduce to
Boolean matrices encoding adjacency relation between subdomains or
boundaries. They play a pivotal role in the expression of
transmission conditions. Note the commutativity relation
$\mR\cdot\mathcal{B} = \mB\cdot\mathcal{R}$. Next we need to introduce
a notation for the range of the restriction operators
\begin{equation}\label{DefinitionSingleTraceSpace}
  \begin{aligned}
    & \mbX_h(\Omega)\coloneq \mathcal{R}(\Vh(\Omega,\Gamma)),\\
    & \mbX_h(\Sigma)\coloneq \mR(\Vh(\Sigma)).
  \end{aligned}
\end{equation}
By construction, $\mbX_h(\Omega)\subset\mbV_h(\Omega)$ and
$\mbX_h(\Sigma)\subset\mbV_h(\Sigma)$. We call $\mbX_h(\Sigma)$ the
\textit{Dirichlet single trace space}: it consists of tuples of Dirichlet
traces that match across subdomain interfaces. Because this space encodes
Dirichlet transmission conditions, it will play an important role in the
subsequent analysis.  Neumann transmission conditions are encoded by its
annihilator $\mbX_h(\Sigma)^{\circ} \subset \mbV_h(\Sigma)^*$.  The
following two properties hold.
\begin{lem}\label{ReformulationTransmission}\quad\\[-15pt]
  \begin{itemize}
  \item[i)]  $\mbX_h(\Omega) = \mB^{-1}(\mbX_h(\Sigma))$,\\[-20pt]
  \item[ii)] $\mbX_h(\Omega)^{\circ} =\mB^{*}(\mbX_h(\Sigma)^{\circ})$.
  \end{itemize}
\end{lem}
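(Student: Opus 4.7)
My plan is to prove both identities by reducing everything to bookkeeping on the trace/restriction operators and the commutativity $\mR\cdot\mathcal{B} = \mB\cdot\mathcal{R}$, plus one non-algebraic ingredient: that $\mathbb{P}_k$-Lagrange pieces matching on mesh-conforming interfaces glue to an element of $\Vh(\Omega)$.

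For part (i), I would first prove $\mbX_h(\Omega) \subset \mB^{-1}(\mbX_h(\Sigma))$ by applying commutativity to any $\bv = \mathcal{R}(v,q) \in \mbX_h(\Omega)$, obtaining $\mB(\bv) = \mR(\mathcal{B}(v,q)) = \mR(v|_{\Sigma}) \in \mR(\Vh(\Sigma)) = \mbX_h(\Sigma)$. For the reverse inclusion, I would start from $\bv = ((v_\Gamma,q_\Gamma),v_1,\dots,v_J) \in \mbV_h(\Omega)$ with $\mB(\bv) \in \mbX_h(\Sigma) = \mR(\Vh(\Sigma))$, which expands as the existence of $u \in \Vh(\Sigma)$ such that $v_\Gamma = u|_{\Gamma}$ and $v_j|_{\Gamma_j} = u|_{\Gamma_j}$ for all $j$. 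The traces of the $v_j$ therefore coincide on every interface $\Gamma_j \cap \Gamma_k$, so a gluing step yields a well-defined $v \in \mathscr{C}^0(\overline{\Omega})$ with $v|_{\Omega_j} = v_j$; because each $\Omega_j$ is mesh-conforming, $v$ is still piecewise $\mathbb{P}_k$, hence $v \in \Vh(\Omega)$, and $v|_{\Gamma} = v_\Gamma$. Then $\mathcal{R}(v,q_\Gamma) = \bv$, which places $\bv \in \mbX_h(\Omega)$.

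For part (ii), my plan rests on two supporting facts: (a) $\mB$ is surjective, since each $\mB_{\Omega_j}\colon \Vh(\Omega_j)\to \Vh(\Gamma_j)$ admits an obvious nodal right inverse and $\mB_\Gamma$ is a projection; (b) $\mrm{Ker}(\mB) \subset \mbX_h(\Omega)$, by the same gluing argument as in (i) applied to local functions with vanishing traces. Combining (a) and (i) gives $\mB(\mbX_h(\Omega)) = \mbX_h(\Sigma)$. The inclusion $\mB^*(\mbX_h(\Sigma)^{\circ}) \subset \mbX_h(\Omega)^{\circ}$ is then immediate from the definition of the adjoint and (i): for $\psi \in \mbX_h(\Sigma)^{\circ}$ and $\bv \in \mbX_h(\Omega)$, $\langle \mB^*(\psi),\bv\rangle = \langle \psi,\mB(\bv)\rangle = 0$. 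For the converse, I would take $\varphi \in \mbX_h(\Omega)^{\circ}$, invoke (b) to get $\varphi \in \mrm{Ker}(\mB)^{\circ}$, and apply~\eqref{ClosedRangeTheorem} to $\mB^*$ (which has closed range in finite dimensions) to obtain $\mrm{Ker}(\mB)^{\circ} = \mrm{Im}(\mB^*)$. This yields $\varphi = \mB^*(\psi)$ for some $\psi \in \mbV_h(\Sigma)^*$, and I would finally verify $\psi \in \mbX_h(\Sigma)^{\circ}$ by lifting any $\bu \in \mbX_h(\Sigma) = \mB(\mbX_h(\Omega))$ to some $\bw \in \mbX_h(\Omega)$ and computing $\langle \psi,\bu\rangle = \langle \mB^*(\psi),\bw\rangle = \langle \varphi,\bw\rangle = 0$.

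The only non-algebraic step is the gluing argument underlying both (i) and (b): it relies on the mesh conformity of the partition and on the $\mathscr{C}^0$ character of $\mathbb{P}_k$-Lagrange functions, so that pieces matching pointwise on shared faces patch into a function of $\Vh(\Omega)$. I expect this to be the only delicate point; everything else is straightforward manipulation of the definitions of $\mbX_h(\Omega)$, $\mbX_h(\Sigma)$, $\mR$, $\mathcal{R}$, together with the polar-set identity~\eqref{eq:polarOfImage} and the closed-range theorem~\eqref{ClosedRangeTheorem}.
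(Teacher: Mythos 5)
Your proof is correct. The paper itself omits the argument, deferring to the proof of Lemma~4.1 in~\cite{MR4665035}, and your proposal follows exactly that path in its discrete incarnation: part (i) by the commutation $\mR\mathcal{B}=\mB\mathcal{R}$ plus the gluing of $\mathbb{P}_k$-Lagrange pieces with matching traces on mesh-conforming interfaces (the discrete substitute for the $\mH^1$ gluing lemma used in the continuous setting), and part (ii) by surjectivity of $\mB$, the inclusion $\mrm{Ker}(\mB)\subset\mbX_h(\Omega)$, and the annihilator/closed-range identities. You have correctly isolated the one non-algebraic step, and your handling of it is sound.
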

\noindent 
Here \(\mB^{-1}\) refers to a preimage, as \(\mB\) is not invertible. We do not provide the proof of this lemma as it follows exactly the
same path as the one of~\cite[Lemma 4.1]{MR4665035}. 

\medskip
Let us now discuss in detail how we impose transmission
conditions through interfaces. Our strategy requires a
scalar product on the trace space, induced by an operator $\mT$ called \emph{impedance} (or \emph{transmission}) \emph{operator}:
\begin{equation}\label{ChoiceImpedance}
  \begin{aligned}
    & \mT\colon\mbV_h(\Sigma)\to \mbV_h(\Sigma)^*\quad \text{such that}\\
    & \langle \mT(\bu),\overline{\bu}\rangle>0, \ \forall \bu\in \mbV_h(\Sigma)\setminus\{0\}
    \quad \text{and}\quad \mT^* = \mT = \overline{\mT}.
  \end{aligned}
\end{equation}
The above properties transfer to the inverse map
$\mT^{-1}\colon \mbV_h(\Sigma)^*\to \mbV_h(\Sigma)$ which itself induces a
scalar product on $\mbV_h(\Sigma)^*$. We shall abundantly use the
corresponding norms defined as follows:
\begin{equation*}
  \Vert \bv\Vert_{\mT}^{2}\coloneq \langle \mT(\bv),\overline{\bv}\rangle\quad \text{and}\quad
  \Vert \bp\Vert_{\mT^{-1}}^{2}\coloneq \langle \mT^{-1}(\bp),\overline{\bp}\rangle.
\end{equation*}
The following result, whose proof can be found in~\cite[Lemma 3.4]{MR4648528}, establishes a connection between the
orthogonal complement of the single trace space and its annihilator.
\begin{lem}\label{OrthogonalDecomposition}\quad\\
  For any impedance operator $\mT\colon \mbV_h(\Sigma)\to \mbV_h(\Sigma)^*$ we have
  the two orthogonal decompositions
  \begin{equation*}
    \begin{aligned}
     & \mbV_h(\Sigma)^{\phantom{*}} = \mbX_h(\Sigma)^{\phantom{\circ}}\oplus \mT^{-1}(\mbX_h(\Sigma)^{\circ}),\\
     & \mbV_h(\Sigma)^{*} = \mbX_h(\Sigma)^{\circ}\oplus \mT(\mbX_h(\Sigma)).
    \end{aligned}
  \end{equation*}
\end{lem}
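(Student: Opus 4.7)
The plan is to leverage the fact that $\mT$, being self-adjoint and positive-definite, turns the finite-dimensional space $\mbV_h(\Sigma)$ into a Hilbert space with inner product $(\bu,\bv)_{\mT} \coloneq \langle \mT(\bu),\overline{\bv}\rangle$. The standard orthogonal decomposition for a closed (in finite dimension, automatic) subspace then yields
\begin{equation*}
  \mbV_h(\Sigma) = \mbX_h(\Sigma) \oplus \mbX_h(\Sigma)^{\perp_{\mT}},
\end{equation*}
where $\mbX_h(\Sigma)^{\perp_{\mT}}$ denotes the $\mT$-orthogonal complement. The substance of the lemma is then to identify this orthogonal complement with $\mT^{-1}(\mbX_h(\Sigma)^{\circ})$.

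For any $\bw\in \mbV_h(\Sigma)$ and $\bu\in \mbX_h(\Sigma)$, I would manipulate $(\bu,\bw)_{\mT} = \langle \mT(\bu),\overline{\bw}\rangle$ using $\mT^{*}=\mT$ (to move $\mT$ onto the second argument of the pairing) and $\overline{\mT}=\mT$ (to commute conjugation with $\mT$) to rewrite it as $\langle \overline{\mT(\bw)},\bu\rangle$. Requiring this to vanish for every $\bu\in \mbX_h(\Sigma)$ is equivalent to $\overline{\mT(\bw)}\in \mbX_h(\Sigma)^{\circ}$, hence to $\mT(\bw)\in \mbX_h(\Sigma)^{\circ}$ provided that $\mbX_h(\Sigma)$ is invariant under complex conjugation. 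This last invariance holds because $\mbX_h(\Sigma)=\mR(\Vh(\Sigma))$ is the image, under the Boolean restriction $\mR$ introduced in~\eqref{DomainRestrictionOperators}, of a real-coefficient Lagrange trace space, and therefore the complexification of a real subspace. This proves $\mbX_h(\Sigma)^{\perp_{\mT}} = \mT^{-1}(\mbX_h(\Sigma)^{\circ})$ and, combined with the Hilbert decomposition above, the first identity of the lemma.

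The second decomposition then follows at once by applying the bijection $\mT\colon \mbV_h(\Sigma)\to \mbV_h(\Sigma)^{*}$ to the first: the two summands map respectively to $\mT(\mbX_h(\Sigma))$ and $\mT\cdot\mT^{-1}(\mbX_h(\Sigma)^{\circ}) = \mbX_h(\Sigma)^{\circ}$, and the injectivity of $\mT$ preserves the directness of the sum.

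The one step I would treat most carefully is bridging the Hermitian $\mT$-scalar product and the bilinear duality pairing used to define the annihilator. This is the only place where complex conjugation is nontrivially involved, and it is handled precisely by the combined use of the three assumptions $\mT^{*}=\mT$, $\overline{\mT}=\mT$, and the (implicit but essential) conjugation invariance of the single-trace space $\mbX_h(\Sigma)$.
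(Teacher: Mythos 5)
The paper does not actually contain a proof of this lemma: it defers entirely to \cite[Lemma 3.4]{MR4648528}, so there is no internal argument to compare against. Your proof is correct and is essentially the standard one: the identification $\mbX_h(\Sigma)^{\perp_{\mT}} = \mT^{-1}(\mbX_h(\Sigma)^{\circ})$ is carried out properly, and you are right that the conjugation-invariance of $\mbX_h(\Sigma)$ is genuinely needed to pass from $\overline{\mT(\bw)}\in\mbX_h(\Sigma)^{\circ}$ to $\mT(\bw)\in\mbX_h(\Sigma)^{\circ}$, since the annihilator is defined through the bilinear (non-conjugated) pairing while orthogonality is sesquilinear; your justification via the realness of the Boolean operator $\mR$ and of the Lagrange trace space is the right one. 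The only point worth spelling out is that the second decomposition is asserted to be \emph{orthogonal} for the $\mT^{-1}$-scalar product on $\mbV_h(\Sigma)^*$: applying the bijection $\mT$ gives directness, as you say, and orthogonality then follows because $\mT$ is an isometry from $(\mbV_h(\Sigma),(\cdot,\cdot)_{\mT})$ onto $(\mbV_h(\Sigma)^*,(\cdot,\cdot)_{\mT^{-1}})$ --- a one-line computation using $\mT^{*}=\mT=\overline{\mT}$ once more.
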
 
\noindent 
The space $\mathscr{X}_h(\Sigma) \coloneq \mbX_h(\Sigma)\times \mbX_h(\Sigma)^{\circ}$ is a subspace
of $\mathscr{H}_h(\Sigma) \coloneq \mbV_h(\Sigma)\times \mbV_h(\Sigma)^*$ that provides a
characterization of both Dirichlet and Neumann transmission conditions (note that they are satisfied by functions of the space $\Vh(\Omega,\Gamma)$ 
in the initial discrete problem~\eqref{IntialBVP3}). The next result follows from Lemma~\ref{OrthogonalDecomposition} and 
is a further characterization of this subspace, established e.g.~in~\cite[Lemma
3.5 and 3.7]{MR4648528}. It reformulates Dirichlet/Neumann transmission conditions as \emph{generalized impedance transmission conditions}.

\begin{lem}\label{CaracExchangeOp}\quad\\
  Define $\Pi\colon \mbV_h(\Sigma)^*\to \mbV_h(\Sigma)^*$ by $\Pi\coloneq 2\mT\mR
  (\mR^*\mT\mR)^{-1}\mR^*-\Id$. Then $\Pi$ is an involutive isometry, that is, 
  $\Pi^{2} = \Id$ and $\Vert \Pi(\bq)\Vert_{\mT^{-1}} = \Vert
  \bq\Vert_{\mT^{-1}}, \ \forall \bq\in \mbV_h(\Sigma)^*$. Moreover, for
  any pair $(\bv,\bp)\in \mathscr{H}_h(\Sigma) \coloneq \mbV_h(\Sigma)\times\mbV_h(\Sigma)^*$, we have
  \begin{equation}\label{CaracAvecEchangeOperator}
    (\bv,\bp)\in \mathscr{X}_h(\Sigma) \coloneq \mbX_h(\Sigma)\times\mbX_h(\Sigma)^{\circ}\quad
    \iff\quad -\bp+i\mT(\bv) = \Pi(\bp+i\mT(\bv)).
  \end{equation}
\end{lem}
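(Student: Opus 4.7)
The plan is to introduce $\mP \coloneq \mT\mR(\mR^*\mT\mR)^{-1}\mR^*$ so that $\Pi = 2\mP - \Id$, and to identify $\mP$ as the orthogonal projector onto $\mT(\mbX_h(\Sigma))$ with respect to the scalar product $\langle \mT^{-1}(\cdot),\overline{\cdot}\rangle$ on $\mbV_h(\Sigma)^*$. Then $\Pi$ is an orthogonal reflection, and the characterization follows by decomposing $\bp + i\mT(\bv)$ along the orthogonal sum of Lemma~\ref{OrthogonalDecomposition}.

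\textbf{Step 1 (involution).} The map $\mR^*\mT\mR:\Vh(\Sigma)\to\Vh(\Sigma)^*$ is well-defined and invertible because $\mR$ is injective (it merely duplicates boundary degrees of freedom) and $\mT$ is positive definite, so $\langle \mR^*\mT\mR(u),\overline{u}\rangle = \langle \mT\mR(u),\overline{\mR(u)}\rangle = \Vert \mR(u)\Vert_{\mT}^{2} > 0$ for $u\neq 0$. Direct cancellation gives $\mP^{2} = \mT\mR(\mR^*\mT\mR)^{-1}(\mR^*\mT\mR)(\mR^*\mT\mR)^{-1}\mR^* = \mP$, hence $\Pi^{2} = (2\mP-\Id)^{2} = 4\mP - 4\mP + \Id = \Id$.

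\textbf{Step 2 (isometry).} I would compute
\begin{equation*}
  \mrm{Im}(\mP) \;=\; \mT\mR(\Vh(\Sigma)) \;=\; \mT(\mbX_h(\Sigma)),
  \qquad \mrm{Ker}(\mP) \;=\; \mrm{Ker}(\mR^*) \;=\; \mrm{Im}(\mR)^{\circ} \;=\; \mbX_h(\Sigma)^{\circ},
\end{equation*}
using the injectivity of $\mR$, the invertibility of $\mT$ and $\mR^*\mT\mR$, and~\eqref{eq:polarOfImage}. By the second orthogonal decomposition in Lemma~\ref{OrthogonalDecomposition}, $\mT(\mbX_h(\Sigma))$ and $\mbX_h(\Sigma)^{\circ}$ are orthogonal complements for the $\mT^{-1}$-scalar product. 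Hence $\mP$ is the orthogonal projector onto $\mT(\mbX_h(\Sigma))$ and $\Pi = 2\mP - \Id$ is the associated orthogonal reflection: writing $\bq = \bq_0 + \bq_1$ with $\bq_0\in \mbX_h(\Sigma)^{\circ}$ and $\bq_1 \in \mT(\mbX_h(\Sigma))$ gives $\Pi(\bq) = \bq_1 - \bq_0$ and $\Vert \Pi(\bq)\Vert_{\mT^{-1}}^{2} = \Vert \bq_0\Vert_{\mT^{-1}}^{2} + \Vert \bq_1\Vert_{\mT^{-1}}^{2} = \Vert \bq\Vert_{\mT^{-1}}^{2}$.

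\textbf{Step 3 (characterization).} For any $(\bv,\bp)\in \mbV_h(\Sigma)\times \mbV_h(\Sigma)^*$, decompose $\bv = \bv_0 + \bv_1$ with $\bv_0\in \mbX_h(\Sigma)$, $\bv_1\in \mT^{-1}(\mbX_h(\Sigma)^{\circ})$ (first decomposition of Lemma~\ref{OrthogonalDecomposition}) and $\bp = \bp_0 + \bp_1$ with $\bp_0\in \mbX_h(\Sigma)^{\circ}$, $\bp_1\in \mT(\mbX_h(\Sigma))$. Applying $\mT$ maps the first decomposition into the second, so the orthogonal components of $\bq \coloneq \bp + i\mT(\bv)$ are $\bq_0 = \bp_0 + i\mT(\bv_1) \in \mbX_h(\Sigma)^{\circ}$ and $\bq_1 = \bp_1 + i\mT(\bv_0) \in \mT(\mbX_h(\Sigma))$. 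Then $\Pi(\bq) = \bq_1 - \bq_0$ and the equation $-\bp + i\mT(\bv) = \Pi(\bp + i\mT(\bv))$ reduces to $\bp_1 = i\mT(\bv_1)$. Since $\bp_1 \in \mT(\mbX_h(\Sigma))$ while $i\mT(\bv_1) \in \mbX_h(\Sigma)^{\circ}$ and the two subspaces have trivial intersection, this forces $\bp_1 = 0$ and $\bv_1 = 0$, i.e.~$(\bv,\bp)\in \mathscr{X}_h(\Sigma)$; the converse is immediate by reversing the argument.

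\textbf{Main obstacle.} The only nontrivial point is to recognize, via~\eqref{eq:polarOfImage} and Lemma~\ref{OrthogonalDecomposition}, that the \emph{algebraic} projector $\mP$ is actually \emph{orthogonal} in the $\mT^{-1}$-geometry; once this is established, both the isometry property and the impedance-trace characterization follow by an orthogonal-decomposition bookkeeping.
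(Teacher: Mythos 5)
Your proof is correct: identifying $\mP=\mT\mR(\mR^*\mT\mR)^{-1}\mR^*$ as the $\mT^{-1}$-orthogonal projector onto $\mT(\mbX_h(\Sigma))$ with kernel $\mbX_h(\Sigma)^{\circ}$, and then reading off the reflection identity componentwise, is exactly the route the paper indicates (it omits the proof, stating only that the result follows from Lemma~\ref{OrthogonalDecomposition} and citing \cite[Lemmas~3.5 and~3.7]{MR4648528}). No gaps; the key observation that range and kernel of the algebraic projector are the two orthogonal summands of Lemma~\ref{OrthogonalDecomposition} is precisely the intended argument.
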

\noindent 
This characterization of transmission conditions is non-trivial,
even in the case of a single subdomain $J = 1$, i.e., if the
computational domain $\Omega$ is not subdivided. We name $\Pi$ the
\textit{exchange operator}. The exact form taken by this operator
intrinsically depends on the choice of the impedance operator $\mT$. The
operator $\Pi$ a priori couples distant subdomains, although, for
certain choices of $\mT$, the exchange operator becomes local, that is, it couples only adjacent subdomain, see the
discussion in~\cite[Section 9]{MR4648528}.

\section{Substructured reformulation of the problem}\label{sec:problem_reformulation}

We wish to reformulate Problem~\eqref{IntialBVP3} considering the domain decomposition introduced in the previous section. Define the Galerkin discretization
$\mA_{\Gamma}\colon \Vh^{\textsc{b}}(\Gamma) \to \Vh^{\textsc{b}}(\Gamma)^*$
of the boundary operator $\mathcal{A}_{\Gamma}$, that is, $\langle \mA_{\Gamma}(\bu),\bv\rangle\coloneq
\langle \mathcal{A}_{\Gamma}(\bu),\bv\rangle$ for all $\bu,\bv\in
\Vh^{\textsc{b}}(\Gamma)$, and define the block-diagonal operator
$\mA\colon \mbV_h(\Omega)\to \mbV_h(\Omega)^*$ by
\begin{equation*}
  \begin{aligned}
    & \mA\coloneq \mrm{diag}(\mA_{\Gamma},\mA_{\Omega_1},\dots,\mA_{\Omega_J})\\
    & \langle \mA_{\Omega_j}(u),v\rangle\coloneq\int_{\Omega_j}(\nabla u\cdot\nabla v - \kappa^{2}u v)\,d\bx
    \quad \forall u,v\in \Vh(\Omega_j).
  \end{aligned}
\end{equation*}
Due to Assumption~\eqref{Assumption1} on the sign of the imaginary part of the problem under study,
if Assumption~\eqref{Assumption2} is also satisfied, then the block diagonal operator
$\mA$ itself satisfies a similar property
\begin{equation}\label{Assumption3}
  \eqref{Assumption1}, \eqref{Assumption2}\quad\Longrightarrow\quad \Im\{\langle
  \mA(\bv),\overline{\bv}\rangle\}\leq 0,\ \forall \bv\in
  \mbV_h(\Omega).
\end{equation}
The source term of~\eqref{IntialBVP3} can be decomposed similarly to \(\mA\). We
define $\ell_{\Omega_j}\in \Vh(\Omega_j)^*$ by $\langle
\ell_{\Omega_j}, v\rangle\coloneq\int_{\Omega_j}f v\,d\bx$ for all $v \in \Vh(\Omega_j)$, and then
$\bell\coloneq(\ell_{\Gamma},\ell_{\Omega_1},\dots,\ell_{\Omega_J})\in
\mbV_h(\Omega)^*$. 
By definition of $\mathcal{R}$ in~\eqref{DomainRestrictionOperators}, we have the factorizations 
\begin{equation}\label{DecompositionMainOperator}
  \mA_{\Omega\times\Gamma} = \mathcal{R}^{*}\mA\mathcal{R}
  \quad\text{and}\quad
  \ell_{\Omega\times\Gamma} = \mathcal{R}^{*}\boldsymbol{\ell}.
\end{equation}
The block-diagonal operator $\mA$ can then be used to decompose Problem~\eqref{IntialBVP3} subdomain-wise, and express the following first reformulation of Problem~\eqref{IntialBVP3}.

\begin{lem}\quad\\
  If $(u,p)\in\Vh(\Omega,\Gamma)$ satisfies Problem~\eqref{IntialBVP3}, then
  there exists $\bp\in \mbV_h(\Sigma)^*$ such that, for $\bu =
  \mathcal{R}(u,p)$, the following equations hold
  \begin{equation}\label{IntialBVP4}
    \begin{aligned}
      & \bu \in \mbV_h(\Omega), \bp\in \mbV_h(\Sigma)^*\\
      & \mA\bu - \mB^{*}\bp = \bell\\
      & -\bp+i\mT\mB\bu = \Pi(\bp+i\mT\mB\bu).
    \end{aligned}
  \end{equation}
  Reciprocally if~\eqref{IntialBVP4} holds for a pair
  $(\bu,\bp)\in \mbV_h(\Omega)\times \mbV_h(\Sigma)^{*}$, then
  there exists $(u,p)\in \Vh(\Omega,\Gamma)$ solution to Problem~\eqref{IntialBVP3} such that $\bu = \mathcal{R}(u,p)$.
\end{lem}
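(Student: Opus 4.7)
The plan is to prove the two implications using the factorization~\eqref{DecompositionMainOperator}, the annihilator identities of Lemma~\ref{ReformulationTransmission}, and the characterization of transmission conditions via the exchange operator from Lemma~\ref{CaracExchangeOp}.

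For the forward direction, I start from a solution $(u,p)$ of~\eqref{IntialBVP3} and set $\bu\coloneq\mathcal{R}(u,p)$. Applying $\mathcal{R}^{*}$ to the candidate equation, the factorization $\mA_{\Omega\times\Gamma} = \mathcal{R}^{*}\mA\mathcal{R}$ and $\ell_{\Omega\times\Gamma} = \mathcal{R}^{*}\bell$ give $\mathcal{R}^{*}(\mA\bu - \bell) = 0$, so $\mA\bu - \bell \in \mrm{Ker}(\mathcal{R}^{*}) = \mrm{Im}(\mathcal{R})^{\circ} = \mbX_h(\Omega)^{\circ}$ by~\eqref{eq:polarOfImage} and the definition~\eqref{DefinitionSingleTraceSpace}. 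Invoking Lemma~\ref{ReformulationTransmission}(ii), $\mbX_h(\Omega)^{\circ} = \mB^{*}(\mbX_h(\Sigma)^{\circ})$, so there exists $\bp \in \mbX_h(\Sigma)^{\circ}$ with $\mA\bu - \mB^{*}\bp = \bell$, which is the second equation of~\eqref{IntialBVP4}. For the transmission equation, I use the commutativity $\mR\cdot\mathcal{B} = \mB\cdot\mathcal{R}$ to get $\mB\bu = \mR(\mathcal{B}(u,p)) \in \mrm{Im}(\mR) = \mbX_h(\Sigma)$. Hence the pair $(\mB\bu, \bp)$ lies in $\mathscr{X}_h(\Sigma)$, and Lemma~\ref{CaracExchangeOp} translates this membership into the third equation of~\eqref{IntialBVP4}.

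For the converse, I start from $(\bu,\bp)$ satisfying~\eqref{IntialBVP4}. The third equation, combined with the characterization~\eqref{CaracAvecEchangeOperator}, yields $\mB\bu \in \mbX_h(\Sigma)$ and $\bp \in \mbX_h(\Sigma)^{\circ}$. By Lemma~\ref{ReformulationTransmission}(i), $\bu \in \mB^{-1}(\mbX_h(\Sigma)) = \mbX_h(\Omega) = \mathcal{R}(\Vh(\Omega,\Gamma))$, so $\bu = \mathcal{R}(u,p)$ for some $(u,p) \in \Vh(\Omega,\Gamma)$. Applying $\mathcal{R}^{*}$ to the second equation, the factorizations give $\mA_{\Omega\times\Gamma}(u,p) - \mathcal{R}^{*}\mB^{*}\bp = \ell_{\Omega\times\Gamma}$. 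Using $\mathcal{R}^{*}\mB^{*} = (\mB\mathcal{R})^{*} = (\mR\mathcal{B})^{*} = \mathcal{B}^{*}\mR^{*}$ together with $\mR^{*}\bp = 0$ (which follows again from $\bp \in \mbX_h(\Sigma)^{\circ} = \mrm{Ker}(\mR^{*})$ via~\eqref{eq:polarOfImage}), the extra term vanishes and I recover $\mA_{\Omega\times\Gamma}(u,p) = \ell_{\Omega\times\Gamma}$, i.e.\ Problem~\eqref{IntialBVP3}.

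The main obstacle, if any, is purely bookkeeping: one has to consistently track which of the two restriction/trace operators ($\mathcal{R},\mathcal{B}$ versus $\mR,\mB$) acts on which space, and use the commutativity $\mR\mathcal{B} = \mB\mathcal{R}$ at the right moment to transfer between single-trace spaces on $\Omega$ and on $\Sigma$. Once these identifications are made, both implications reduce to direct applications of Lemmas~\ref{ReformulationTransmission} and~\ref{CaracExchangeOp}, so no analytic difficulty is expected.
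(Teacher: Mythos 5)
Your proof is correct and follows essentially the same route as the paper's: both reduce~\eqref{IntialBVP3} to the statement that $\bu=\mathcal{R}(u,p)\in\mbX_h(\Omega)$ and $\mA\bu-\bell\in\mbX_h(\Omega)^{\circ}$, then invoke parts \textit{i)} and \textit{ii)} of Lemma~\ref{ReformulationTransmission} to pass to the skeleton and Lemma~\ref{CaracExchangeOp} to encode the transmission conditions via $\Pi$. The only cosmetic difference is that you phrase the annihilator steps through $\mrm{Ker}(\mathcal{R}^{*})$ and $\mrm{Ker}(\mR^{*})$ using~\eqref{eq:polarOfImage}, whereas the paper writes the same facts directly as a chain of equivalences.
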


\begin{proof}
  Since by definition \(\mbX_h(\Omega) \coloneq \mathcal{R}(\Vh(\Omega,\Gamma))\), Equation~\eqref{IntialBVP3} can be rewritten \(\lbrack \bu \coloneq \mathcal{R}(u,p)\in \mbX_h(\Omega)\) and \(\langle \mA(\bu)-\bell,\bv\rangle = 0,\ \forall \bv\in\mbX_h(\Omega) \rbrack\), which is equivalent to \(\lbrack \bu\in \mbX_h(\Omega)\) and
  \(\mA(\bu)-\bell\in \mbX_h(\Omega)^{\circ} \rbrack\). Next we know from \textit{i)} of Lemma~\ref{ReformulationTransmission} that \(\lbrack \bu\in \mbX_h(\Omega)\rbrack\iff \lbrack \bu\in
  \mbV_h(\Omega) \;\text{and}\; \mB(\bu)\in \mbX_h(\Sigma) \rbrack\), and \textit{ii)} of Lemma~\ref{ReformulationTransmission} yields \(
  \lbrack \mA(\bu)-\bell\in \mbX_h(\Omega)^{\circ}\rbrack \iff\lbrack
  \exists\bp\in \mbX_h(\Sigma)^{\circ}\;\text{such that}\;\mA(\bu)-\bell = \mB^{*}\bp \rbrack
  \). Thanks to the characterization of \(\mathscr{X}_h(\Sigma) = \mbX_h(\Sigma)\times \mbX_h(\Sigma)^{\circ}\) provided by Lemma~\ref{CaracExchangeOp}, we finally obtain~\eqref{IntialBVP4}.
\end{proof}

We wish to further rearrange Problem~\eqref{IntialBVP4}, so as to
eliminate the volume unknown $\bu$, and reduce the problem to an
equation with a substructured unknown on the skeleton $\Sigma$ only. Following the theory in~\cite{MR4648528,MR4433119}, we rewrite the first equation of~\eqref{IntialBVP4} as $(\mA -i\mB^*\mT\mB)\bu
=\mB^{*}(\bp-i\mT\mB\bu) + \bell$. However, in the present case the
operator $\mA -i\mB^*\mT\mB$ may be singular, contrary to
the situation in~\cite{MR4648528,MR4433119}. 
Here are two fundamental 
properties that we will use to circumvent this state of affairs.

\begin{lem}\label{ElementaryPropScattering}\quad\\
  If either \(\mA-i\mB^*\mT\mB\) is invertible, or Assumption~\eqref{Assumption2} holds, then\\[-5pt]
  \begin{equation*}
    \hspace{-8.65cm}
    \begin{aligned}
     & \textit{i)}  & \mrm{Ker}(\mA-i\mB^*\mT\mB) & = \mrm{Ker}(\mA)\cap\mrm{Ker}(\mB)\\
     & \textit{ii)} & \mrm{Im}(\mA-i\mB^*\mT\mB)  & = \mrm{Im}(\mA)+\mrm{Im}(\mB^*)\\[5pt]
    \end{aligned}
  \end{equation*}
\end{lem}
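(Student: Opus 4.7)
The plan is to treat the two hypotheses in parallel, starting from the inclusions that hold unconditionally. The identity
\begin{equation*}
  (\mA - i\mB^{*}\mT\mB)\bu \;=\; \mA\bu \,+\, \mB^{*}(-i\mT\mB\bu)
\end{equation*}
yields immediately $\mrm{Ker}(\mA)\cap\mrm{Ker}(\mB)\subset\mrm{Ker}(\mA-i\mB^{*}\mT\mB)$ and $\mrm{Im}(\mA-i\mB^{*}\mT\mB)\subset\mrm{Im}(\mA)+\mrm{Im}(\mB^{*})$. When $\mA-i\mB^{*}\mT\mB$ is invertible, it is in fact bijective (finite dimension), so its kernel is $\{0\}$ and its image is the whole of $\mbV_h(\Omega)^{*}$, forcing both inclusions to become equalities at no extra cost. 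All the work lies in the case where only Assumption~\eqref{Assumption2} is available.

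For the reverse inclusion in~\textit{i)} under Assumption~\eqref{Assumption2}, I will use an imaginary-part test. Given $\bu\in\mrm{Ker}(\mA-i\mB^{*}\mT\mB)$, using $\mT^{*}=\mT$ and the fact that the Boolean restriction $\mB$ commutes with complex conjugation (so $\mB\overline{\bu}=\overline{\mB\bu}$), pairing the equation with $\overline{\bu}$ gives
\begin{equation*}
  0 \;=\; \langle(\mA-i\mB^{*}\mT\mB)\bu,\overline{\bu}\rangle \;=\; \langle\mA\bu,\overline{\bu}\rangle \,-\, i\,\|\mB\bu\|_{\mT}^{2}.
\end{equation*}
The imaginary part of the first term is $\leq 0$ by~\eqref{Assumption3}, while $\|\mB\bu\|_{\mT}^{2}\geq 0$, so both vanish. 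Hence $\mB\bu=0$, and substituting back into the original equation yields $\mA\bu=0$.

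For~\textit{ii)} I will switch to annihilators. Using $\mT^{*}=\mT$ and $\mB^{**}=\mB$, one computes $(\mA-i\mB^{*}\mT\mB)^{*}=\mA^{*}-i\mB^{*}\mT\mB$. In finite dimension every subspace is closed, so~\eqref{ClosedRangeTheorem} and~\eqref{PropSumOfAnnihilator} apply and give
\begin{equation*}
  \mrm{Im}(\mA-i\mB^{*}\mT\mB) = \mrm{Ker}(\mA^{*}-i\mB^{*}\mT\mB)^{\circ},\quad
  \mrm{Im}(\mA)+\mrm{Im}(\mB^{*}) = \bigl(\mrm{Ker}(\mA^{*})\cap\mrm{Ker}(\mB)\bigr)^{\circ}.
\end{equation*}
Thus~\textit{ii)} reduces to the analogue of~\textit{i)} for $\mA^{*}$, namely $\mrm{Ker}(\mA^{*}-i\mB^{*}\mT\mB)=\mrm{Ker}(\mA^{*})\cap\mrm{Ker}(\mB)$. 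The key check is that $\mA^{*}$ still satisfies the sign condition: from the definition of the adjoint, $\langle\mA^{*}\bu,\overline{\bu}\rangle = \langle\mA\overline{\bu},\overline{\overline{\bu}}\rangle$, whose imaginary part is $\leq 0$ by~\eqref{Assumption3} applied to $\overline{\bu}$. The argument of the previous paragraph then applies verbatim with $\mA$ replaced by $\mA^{*}$.

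The main subtlety lies in this last step: $\mA$ is \emph{not} self-adjoint for the non-symmetric FEM-BEM couplings (Johnson-N\'ed\'elec, Bielak-MacCamy, where $\mathcal{A}_{\Gamma}^{*}\neq\mathcal{A}_{\Gamma}$), so~\textit{ii)} cannot be derived from~\textit{i)} by a symmetry shortcut. The detour through annihilators together with the transfer of~\eqref{Assumption3} to $\mA^{*}$ via the conjugation substitution $\bu\leftrightarrow\overline{\bu}$ is exactly what makes the argument go through.
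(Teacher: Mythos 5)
Your proof is correct and follows essentially the same route as the paper's: the unconditional inclusions plus the invertible case, the imaginary-part test with $0=\Im\{\langle\mA\bu,\overline{\bu}\rangle\}-\Vert\mB\bu\Vert_{\mT}^{2}$ for \textit{i)}, and the reduction of \textit{ii)} to the kernel identity for $\mA^{*}$ via \eqref{ClosedRangeTheorem} and \eqref{PropSumOfAnnihilator}, with the sign condition transferred to $\mA^{*}$ by the conjugation substitution. The paper's proof is organized identically, down to the observation that $\Im\{\langle\mA^{*}(\overline{\bv}),\bv\rangle\}=\Im\{\langle\mA(\bv),\overline{\bv}\rangle\}\leq 0$.
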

\begin{proof}
  The result is clear if \(\mA-i\mB^*\mT\mB\) is invertible. Indeed, \(\mrm{Ker}(\mA)\cap\mrm{Ker}(\mB)\subset \mrm{Ker}(\mA-i\mB^*\mT\mB)=\{0\}\) implies \(\mrm{Ker}(\mA)\cap\mrm{Ker}(\mB)=\{0\}\), and \(\mbV_h(\Omega)^* = \mrm{Im}(\mA-i\mB^*\mT\mB) \subset \mrm{Im}(\mA)+\mrm{Im}(\mB^*)\) implies \(\mrm{Im}(\mA)+\mrm{Im}(\mB^*)=\mbV_h(\Omega)^*\).  
  
  Let us assume now that \(\mA-i\mB^*\mT\mB\) has a non-trivial kernel, but Assumption~\eqref{Assumption2} holds, which implies~\eqref{Assumption3}: \(\Im\{\langle \mA(\bv),\overline{\bv}\rangle\}\leq 0,\ \forall \bv\in \mbV_h(\Omega)\). 
On the one hand, $\mrm{Ker}(\mA)\cap\mrm{Ker}(\mB)\subset
\mrm{Ker}(\mA-i\mB^*\mT\mB)$. On the other hand, $\bu \in
\mrm{Ker}(\mA-i\mB^*\mT\mB)\Rightarrow 0 = \Im\{ \langle
\mA(\bu),\overline{\bu}\rangle - i\langle
\mB^*\mT\mB(\bu),\overline{\bu}\rangle\} = \Im\{ \langle
\mA(\bu),\overline{\bu}\rangle\} - \Vert \mB(\bu)\Vert_{\mT}^{2}$, so
that $0\leq \Vert \mB(\bu)\Vert_{\mT}^{2} = \Im\{ \langle
\mA(\bu),\overline{\bu}\rangle\}\leq 0\Rightarrow \mB(\bu) =
0\Rightarrow \bu\in \mrm{Ker}(\mB)$. Finally, $\bu\in
\mrm{Ker}(\mA-i\mB^*\mT\mB)\cap\mrm{Ker}(\mB) = \mrm{Ker}(\mA)\cap
\mrm{Ker}(\mB)$. This establishes \textit{i)}.

Since $\Im\{ \langle \mA^*(\overline{\bv}),{\bv}\rangle\} = \Im\{
\langle \mA(\bv),\overline{\bv}\rangle\} \leq 0,\ \forall \bv\in
\mbV_h(\Omega)$, we prove in the same manner that
$\mrm{Ker}(\mA^*-i\mB^*\mT\mB) =  \mrm{Ker}(\mA^*)\cap\mrm{Ker}(\mB)$. 
Hence, using~\eqref{ClosedRangeTheorem} and~\eqref{PropSumOfAnnihilator},
$\mrm{Im}(\mA)+\mrm{Im}(\mB^*) = \mrm{Ker}(\mA^*)^{\circ}+\mrm{Ker}(\mB)^{\circ} = ( \mrm{Ker}(\mA^*)\cap\mrm{Ker}(\mB))^{\circ} 
= \mrm{Ker}(\mA^*-i\mB^*\mT\mB)^{\circ} = \mrm{Im}(\mA-i\mB^*\mT\mB)$.
\end{proof}

\begin{rem}\label{RemarkCompatibility}\quad\\
  Under one of the hypotheses of Lemma~\ref{ElementaryPropScattering}, if~\eqref{IntialBVP4} holds, then
  $\bell\in \mrm{Im}(\mA-i\mB^*\mT\mB)$ necessarily.
\end{rem}

\begin{rem}\quad\\
  In the following, we will use the relations i) and ii) of Lemma~\ref{ElementaryPropScattering}. Thus, our discrete theory will cover both the case when \(\mA-i\mB^*\mT\mB\) has a non-trivial kernel, and when it is invertible. In the latter case, proofs could be simplified using the stronger result that \(\mrm{Ker}(\mA-i\mB^*\mT\mB)=\{0\}\) and \(\mrm{Im}(\mA-i\mB^*\mT\mB)\) is the whole discrete space.
\end{rem}

Like in~\cite{MR4665035,MR4648528, MR4433119}, the rearrangement of Problem~\eqref{IntialBVP4}
will rely on pairs of
Dirichlet-Neumann traces.
This leads us to introduce the multi-trace space
\begin{equation*}
  \begin{aligned}
    & \mathscr{H}_h(\Sigma)\coloneq \mbV_h(\Sigma)\times \mbV_h(\Sigma)^*\\ &
    \Vert (\bv,\bp)\Vert_{\mT\times\mT^{-1}}^{2} \coloneq \Vert
    \bv\Vert_{\mT}^{2} + \Vert \bp\Vert_{\mT^{-1}}^{2}. 
  \end{aligned}
\end{equation*}
Two subspaces of $\mathscr{H}_h(\Sigma)$ will play a pivotal
role. They are related to local problems and transmission conditions,
and defined as follows
\begin{equation}\label{RemarkableSubspaces}
  \begin{aligned}
    \mathscr{C}_h(\mA) &\coloneq \{(\mB(\bu),\bp)\in \mathscr{H}_h(\Sigma),\; \mA\bu = \mB^{*}\bp\},\\
    \mathscr{X}_h(\Sigma) &\coloneq \mbX_h(\Sigma)\times \mbX_h(\Sigma)^{\circ}, 
  \end{aligned}
\end{equation}
where \(\mathscr{C}_h(\mA)\) is often called the \emph{Cauchy data} set. The single-trace space $\mathscr{X}_h(\Sigma)$ was already introduced in Lemma~\ref{CaracExchangeOp} and characterized using the exchange operator $\Pi$. 
We have the following fundamental results involving the space $\mathscr{C}_h(\mA)$.

\begin{prop}\label{IsomorphismCauchyTraces}\quad\\
  Under one of the hypotheses of Lemma~\ref{ElementaryPropScattering}, the application
  $(\bv,\bp)\mapsto \bp-i\mT(\bv)$ isomorphically maps
  $\mathscr{C}_h(\mA)$ onto $\mbV_h(\Sigma)^*$, with the estimate
  $\Vert (\bv,\bp)\Vert_{\mT\times\mT^{-1}}^{2}\leq \Vert
  \bp-i\mT\bv\Vert_{\mT^{-1}}^{2} \leq 2\,\Vert
  (\bv,\bp)\Vert_{\mT\times\mT^{-1}}^{2}$ for all
  $(\bv,\bp)\in \mathscr{C}_h(\mA)$.
\end{prop}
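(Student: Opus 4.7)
The plan is to prove the proposition in three stages: first establish an exact identity relating the two norms, then use it for the two-sided estimate and injectivity, and finally verify surjectivity via Lemma~\ref{ElementaryPropScattering}. The central computation is the expansion
\[
  \Vert \bp - i\mT\bv\Vert_{\mT^{-1}}^{2} = \Vert \bp\Vert_{\mT^{-1}}^{2} + \Vert \bv\Vert_{\mT}^{2} - 2\,\Im\{\langle \bp, \overline{\bv}\rangle\},
\]
which I would obtain by expanding $\langle \mT^{-1}(\bp - i\mT\bv), \overline{\bp - i\mT\bv}\rangle$ and exploiting $\mT^{*}=\mT=\overline{\mT}$ to get $\overline{\mT\bv} = \mT\overline{\bv}$, the norm identity $\Vert \mT\bv\Vert_{\mT^{-1}}^{2}=\Vert \bv\Vert_{\mT}^{2}$, and cross terms of the form $\pm i\,\langle \bp,\overline{\bv}\rangle$ whose sum simplifies to $-2\,\Im\{\langle \bp,\overline{\bv}\rangle\}$.

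From this identity the upper bound follows unconditionally: Cauchy-Schwarz gives $|\langle \bp,\overline{\bv}\rangle| \leq \Vert \bp\Vert_{\mT^{-1}}\Vert \bv\Vert_{\mT}$, and Young's inequality $2ab\leq a^{2}+b^{2}$ absorbs the cross term, yielding $\Vert \bp - i\mT\bv\Vert_{\mT^{-1}}^{2}\leq 2\Vert(\bv,\bp)\Vert_{\mT\times\mT^{-1}}^{2}$. The lower bound reduces to showing $\Im\{\langle \bp,\overline{\bv}\rangle\}\leq 0$, and this is exactly where the Cauchy-data structure enters: for $(\bv,\bp)\in\mathscr{C}_h(\mA)$ I would pick $\bu\in\mbV_h(\Omega)$ with $\bv=\mB\bu$ and $\mA\bu=\mB^{*}\bp$, so that
\[
  \langle \bp,\overline{\bv}\rangle = \langle \bp, \mB(\overline{\bu})\rangle = \langle \mB^{*}\bp,\overline{\bu}\rangle = \langle \mA\bu,\overline{\bu}\rangle,
\]
whose imaginary part is non-positive by~\eqref{Assumption3} (consequence of~\eqref{Assumption2}). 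The lower bound implies injectivity, but I would note that injectivity can be obtained directly under either hypothesis of Lemma~\ref{ElementaryPropScattering}: if $\bp-i\mT\bv=0$ with $\bv=\mB\bu$ and $\mA\bu=\mB^{*}\bp$, then $(\mA-i\mB^{*}\mT\mB)\bu=0$, so by Lemma~\ref{ElementaryPropScattering}~i) $\bu\in\mrm{Ker}(\mA)\cap\mrm{Ker}(\mB)$, hence $\bv=0$ and $\bp=i\mT\bv=0$.

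For surjectivity, given $\bq\in\mbV_h(\Sigma)^{*}$ I would reverse-engineer a preimage: look for $\bu\in\mbV_h(\Omega)$ solving $(\mA-i\mB^{*}\mT\mB)\bu = \mB^{*}\bq$, then set $\bv=\mB\bu$ and $\bp=\bq+i\mT\bv$. A direct substitution gives $\mA\bu = i\mB^{*}\mT\mB\bu + \mB^{*}\bq = \mB^{*}\bp$, so $(\bv,\bp)\in\mathscr{C}_h(\mA)$ and by construction $\bp-i\mT\bv=\bq$. Existence of the required $\bu$ is precisely Lemma~\ref{ElementaryPropScattering}~ii), since $\mB^{*}\bq\in\mrm{Im}(\mB^{*})\subset\mrm{Im}(\mA)+\mrm{Im}(\mB^{*})=\mrm{Im}(\mA-i\mB^{*}\mT\mB)$. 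The main technical obstacle is the bookkeeping of the sesquilinear expansion — in particular, tracking complex conjugations and the repeated use of $\mT^{*}=\mT=\overline{\mT}$ to simplify mixed pairings — but once the identity above is in hand the rest of the proof is essentially algebraic.
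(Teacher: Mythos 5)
Your proposal is correct and follows essentially the same route as the paper's proof: the same norm identity $\Vert\bp-i\mT\bv\Vert_{\mT^{-1}}^{2}=\Vert(\bv,\bp)\Vert_{\mT\times\mT^{-1}}^{2}-2\Im\{\langle\bp,\overline{\bv}\rangle\}$, the same reduction of the cross term to $\langle\mA\bu,\overline{\bu}\rangle$ via the Cauchy-data structure together with~\eqref{Assumption3}, and the same surjectivity construction from \textit{ii)} of Lemma~\ref{ElementaryPropScattering}. Your additional direct injectivity argument via \textit{i)} of Lemma~\ref{ElementaryPropScattering} is a worthwhile refinement, since it establishes the isomorphism under either hypothesis of that lemma, whereas the estimate-based injectivity (used in the paper) relies on the sign condition~\eqref{Assumption2}.
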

\begin{proof}
Pick $\bq\in \mbV_h(\Sigma)^{*}$ arbitrary. According to
\textit{ii)} of Lemma~\ref{ElementaryPropScattering}, there exists
$\bu\in \mbV_h(\Omega)$ such that $(\mA-i\mB^*\mT \mB)\bu =
\mB^{*}\bq$, which rewrites $\mA\bu = \mB^{*}(\bq+i\mT\mB\bu)$.
As a consequence $(\bv,\bp) \coloneq(\mB\bu, \bq+i\mT\mB\bu)\in
\mathscr{C}_h(\mA)$, and $\bp - i\mT\bv = \bq$ by
construction, hence the surjectivity.

Next, for any $(\bv,\bp)\in \mathscr{C}_h(\mA)$, there exists
$\bu\in \mbV_h(\Omega)$ such that $\bv = \mB\bu$ and $\mB^*\bp =
\mA\bu$. Thus, $\langle \bp,\overline{\bv}\rangle =
\langle\bp,\mB(\overline{\bu})\rangle =
\langle\mB^*(\bp),\overline{\bu}\rangle =
\langle\mA(\bu),\overline{\bu}\rangle$. 
Then,~\eqref{Assumption3} yields $0\leq -2\Im\{\langle
\bp,\overline{\bv}\rangle\}\leq \Vert
(\bv,\bp)\Vert_{\mT\times\mT^{-1}}^{2}$, hence $0\leq \Vert
\bp-i\mT\bv\Vert_{\mT^{-1}}^{2} - \Vert
(\bv,\bp)\Vert_{\mT\times\mT^{-1}}^{2}\leq \Vert
(\bv,\bp)\Vert_{\mT\times\mT^{-1}}^{2}$, which establishes both
injectivity and the estimates. 
\end{proof}

\begin{prop}\label{CauchySetDecomposition}\quad\\
  Assume one of the hypotheses of Lemma~\ref{ElementaryPropScattering}. Define the graph space of \(\,i \mT\colon \mbV_h(\Sigma)\to \mbV_h(\Sigma)^* \): \(\mathscr{G}(i\mT)\coloneq \{(\bv,i\mT(\bv)),\bv \in \mbV_h(\Sigma)\}\). Then
  \begin{align*}
    \mathscr{H}_h(\Sigma) = \mathscr{C}_h(\mA) \oplus \mathscr{G}(i\mT).
  \end{align*}
\end{prop}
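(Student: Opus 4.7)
The plan is to derive the decomposition directly from Proposition~\ref{IsomorphismCauchyTraces}, which states that the map $\Phi\colon(\bv,\bp)\mapsto \bp-i\mT\bv$ restricts to an isomorphism from $\mathscr{C}_h(\mA)$ onto $\mbV_h(\Sigma)^{*}$. Since $\Phi$ clearly vanishes on $\mathscr{G}(i\mT)$ (if $\bp=i\mT\bv$, then $\bp-i\mT\bv=0$), and since $\Phi$ also happens to be defined on all of $\mathscr{H}_h(\Sigma)$, one can think of $\Phi$ as a ``candidate projection'' along $\mathscr{G}(i\mT)$ whose restriction to $\mathscr{C}_h(\mA)$ is a bijection onto the target. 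The two standard direct-sum checks then reduce to the injectivity and surjectivity provided by Proposition~\ref{IsomorphismCauchyTraces}.

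For the trivial intersection $\mathscr{C}_h(\mA)\cap\mathscr{G}(i\mT)=\{0\}$: I would take $(\bv,\bp)$ in this intersection, note that $\bp=i\mT\bv$ forces $\Phi(\bv,\bp)=\bp-i\mT\bv=0$, and then invoke the injectivity part of Proposition~\ref{IsomorphismCauchyTraces} to conclude $(\bv,\bp)=0$. (Alternatively, one could unfold the definitions and use \textit{i)} of Lemma~\ref{ElementaryPropScattering} to reach the same conclusion, but routing through the previous proposition is cleaner.)

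For the sum $\mathscr{C}_h(\mA)+\mathscr{G}(i\mT)=\mathscr{H}_h(\Sigma)$: given an arbitrary $(\bv,\bp)\in\mathscr{H}_h(\Sigma)$, set $\bq\coloneq\bp-i\mT\bv\in\mbV_h(\Sigma)^{*}$. By the surjectivity part of Proposition~\ref{IsomorphismCauchyTraces}, there exists $(\bv_1,\bp_1)\in\mathscr{C}_h(\mA)$ with $\bp_1-i\mT\bv_1=\bq$. Setting $\bv_2\coloneq \bv-\bv_1$ and $\bp_2\coloneq\bp-\bp_1$, one checks immediately that $\bp_2=i\mT\bv_2$, so $(\bv_2,\bp_2)\in\mathscr{G}(i\mT)$, and $(\bv,\bp)=(\bv_1,\bp_1)+(\bv_2,\bp_2)$ is the desired splitting.

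There is no real obstacle here: the whole content is already packaged in Proposition~\ref{IsomorphismCauchyTraces}. The only thing to be careful about is to avoid reproving that proposition, i.e.~to explicitly invoke \textit{both} the injectivity and the surjectivity it provides, since each of them corresponds to exactly one of the two parts of a direct-sum statement.
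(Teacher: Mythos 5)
Your proof is correct, but it takes a genuinely shorter route than the paper's. The paper proves both parts of the direct sum by going back down to the volume space $\mbV_h(\Omega)$: for the trivial intersection it lifts $(\bv,\bp)$ to some $\bu$ with $\bv=\mB(\bu)$, $\mA\bu=\mB^*\bp$ and identifies $\bu\in\mrm{Ker}(\mA-i\mB^*\mT\mB)=\mrm{Ker}(\mA)\cap\mrm{Ker}(\mB)$ via Lemma~\ref{ElementaryPropScattering}; for the sum it uses the surjectivity of $\mB$ and part \textit{ii)} of that lemma to build an explicit $\bw\in\mbV_h(\Omega)$ whose trace data give the $\mathscr{G}(i\mT)$-component, the $\mathscr{C}_h(\mA)$-component being carried by $\tilde{\bu}-\bw$. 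You instead treat Proposition~\ref{IsomorphismCauchyTraces} as a black box and observe that the decomposition is the standard linear-algebra consequence of having a map $(\bv,\bp)\mapsto\bp-i\mT\bv$ on $\mathscr{H}_h(\Sigma)$ whose kernel is exactly $\mathscr{G}(i\mT)$ and whose restriction to $\mathscr{C}_h(\mA)$ is a bijection onto $\mbV_h(\Sigma)^*$: injectivity gives the trivial intersection, surjectivity gives the sum. Since Proposition~\ref{IsomorphismCauchyTraces} precedes this statement and does not depend on it, there is no circularity, and your argument never needs to leave the trace spaces. What the paper's longer construction buys is an explicit volume representative of each component (the functions $\bw$ and $\tilde{\bu}-\bw$), in the same spirit as the constructions used later for the scattering operator; what your argument buys is brevity and the clear identification of the abstract mechanism (complement of the kernel of a projection-like map) behind the splitting.
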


\begin{proof}
  Let \((\bv,\bp)\in \mathscr{C}_h(\mA) \cap \mathscr{G}(i\mT)\). There exists \(\bu \in \mbV_h(\Omega)\) such that \(\bv = \mB ( \bu)\), \(\mA \bu = \mB^* \bp\), and \(\bp = i \mT \bv\). Combining these relations with Lemma~\ref{ElementaryPropScattering}, we deduce that \(\bu \in \mrm{Ker}(\mA - i \mB^* \mT \mB)= \mrm{Ker}(\mA) \cap \mrm{Ker}(\mB)\), and then \((\bv,\bp)=(0,0)\).

  Now take an arbitrary \((\bv,\bp)\in\mathscr{H}_h(\Sigma)\). By surjectivity of \(\mB\), there exists \(\tilde{\bu}\in \mbV_h(\Omega)\) such that \(\mB(\tilde{\bu})=\bv\). Using \textit{ii)} of Lemma~\ref{ElementaryPropScattering}, we deduce that there exists \(\bw \in \mbV_h(\Omega)\) such that \((\mA-i\mB^*\mT\mB)\bw = \mA \tilde{\bu}-\mB^*\bp\). Set
  \begin{align*}
    &\bv_1 \coloneq \mB(\bw), && \bp_1 \coloneq i \mT \bv_1 = i \mT \mB(\bw),\\
    &\bv_2 \coloneq \mB(\tilde{\bu}-\bw), && \bp_2 \coloneq \bp - i \mT \mB(\bw).
  \end{align*}
  By construction, \((\bv_1,\bp_1)\in \mathscr{G}(i \mT)\). Besides, \(\mA ( \tilde{\bu}-\bw) = \mB^*(\bp - i\mT \mB \bw)= \mB^* \bp_2\), so that \((\bv_2,\bp_2)\in \mathscr{C}_h(\mA)\). To end the proof, note that \( \bv_1+\bv_2 = \mB(\tilde{\bu}) = \bv\) and \(\bp_1+\bp_2=\bp\) so that \(\mathscr{C}_h(\mA) + \mathscr{G}(i\mT)=\mathscr{H}_h(\Sigma)\).
\end{proof}

The space $\mathscr{C}_h(\mA)$ introduced above ``models'' the wave
equation local to each subdomain. In the next Proposition, we introduce the so-called
\emph{scattering operator}, which will play the role of a local solver
in our domain decomposition method. Recall the
definition of the Moore-Penrose pseudo-inverse mentioned in
Section~\ref{NotationConventions} and denoted with \(^{\dagger}\).

\begin{prop}\label{ProposistionContractivity}\quad\\
  Assume one of the hypotheses of Lemma~\ref{ElementaryPropScattering}. There exists a unique
  linear map $\mS\colon \mbV_h(\Sigma)^{*}\to \mbV_h(\Sigma)^{*}$, later
  referred to as \emph{scattering operator}, that satisfies
  \begin{equation}\label{ScatteringOperator}
    \bp+i\mT\bv = \mS(\bp-i\mT\bv),\quad \forall (\bv,\bp)\in
    \mathscr{C}_h(\mA).
  \end{equation}
  This operator is also given by the formula $\mS = \Id
  +2i\mT\mB(\mA-i\mB^*\mT\mB)^{\dagger}\mB^*$. 
  
  Besides, assuming~\eqref{Assumption2}, \(\mS\) is
  $\mT^{-1}$-contractive\textsuperscript{\ref{note:contraction}}, i.e.~for any $\bq\in \mbV_h(\Sigma)^{*}$, $\Vert \mS(\bq)\Vert_{\mT^{-1}} \le \Vert \bq \Vert_{\mT^{-1}}$. 
  More precisely,
  \begin{equation}\label{Contractivity}
    \begin{aligned}
      & \Vert \mS(\bq)\Vert_{\mT^{-1}}^{2} - 
      4\Im\{ \langle \mA(\bu),\overline{\bu}\rangle\} = \Vert \bq\Vert_{\mT^{-1}}^{2}\\
      & \text{where}\;\;\bu = (\mA-i\mB^*\mT\mB)^{\dagger}\mB^*\bq. 
    \end{aligned}
  \end{equation}
\end{prop}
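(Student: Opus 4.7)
I would split the statement into three tasks: (i) define $\mS$ and establish existence and uniqueness; (ii) derive the explicit pseudo-inverse formula; (iii) prove the energy identity~\eqref{Contractivity} and deduce contractivity.

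For (i), I would simply invoke Proposition~\ref{IsomorphismCauchyTraces}: the map $(\bv,\bp)\mapsto \bp-i\mT\bv$ is a bijection from $\mathscr{C}_h(\mA)$ onto $\mbV_h(\Sigma)^*$. For every $\bq\in\mbV_h(\Sigma)^*$ there is thus a unique $(\bv,\bp)\in\mathscr{C}_h(\mA)$ with $\bp-i\mT\bv=\bq$, and I set $\mS(\bq)\coloneq \bp+i\mT\bv$. Linearity of $\mS$ is inherited from the linearity of the inverse of this isomorphism, which also gives uniqueness. For (ii), I would construct a representative $\bu\in\mbV_h(\Omega)$ of $(\bv,\bp)$ by solving $(\mA-i\mB^*\mT\mB)\bu=\mB^*\bq$. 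Existence of such $\bu$ is ensured by \textit{ii)} of Lemma~\ref{ElementaryPropScattering}, since $\mB^*\bq\in\mrm{Im}(\mB^*)\subset\mrm{Im}(\mA-i\mB^*\mT\mB)$; hence by~\eqref{MoorePenroseRange} the canonical choice $\bu\coloneq(\mA-i\mB^*\mT\mB)^{\dagger}\mB^*\bq$ works. Rewriting $(\mA-i\mB^*\mT\mB)\bu = \mB^*\bq$ as $\mA\bu = \mB^*(\bq+i\mT\mB\bu)$ shows that $(\mB\bu,\bq+i\mT\mB\bu)\in\mathscr{C}_h(\mA)$, so by the uniqueness already proved $\bv=\mB\bu$ and $\bp=\bq+i\mT\mB\bu$. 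Substitution yields $\mS(\bq)=\bq+2i\mT\mB\bu$, which is the announced formula. The formula is insensitive to the specific choice of preimage: two solutions of $(\mA-i\mB^*\mT\mB)\bu=\mB^*\bq$ differ by an element of $\mrm{Ker}(\mA)\cap\mrm{Ker}(\mB)$ by \textit{i)} of Lemma~\ref{ElementaryPropScattering}, which lies in $\mrm{Ker}(\mB)$ and is therefore killed by $\mT\mB$.

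For (iii), I would expand the $\mT^{-1}$-norm difference directly, exploiting the symmetry hypotheses $\mT=\mT^*=\overline{\mT}$ from~\eqref{ChoiceImpedance}. For $(\bv,\bp)\in\mathscr{C}_h(\mA)$ with $\bq=\bp-i\mT\bv$ and $\mS(\bq)=\bp+i\mT\bv$, the key computation reduces the cross terms of $\langle \mT^{-1}(\bp\pm i\mT\bv),\overline{\bp\pm i\mT\bv}\rangle$ to pairings of the form $\langle\bp,\overline{\bv}\rangle$, using that $\langle\mT^{-1}\bp,\mT\overline{\bv}\rangle=\langle\bp,\overline{\bv}\rangle$ by symmetry of $\mT$ and that $\overline{\mT\bv}=\mT\overline{\bv}$. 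The diagonal terms coincide on both sides and cancel, leaving $\|\mS(\bq)\|_{\mT^{-1}}^2 - \|\bq\|_{\mT^{-1}}^2 = -4\,\Im\{\langle\bp,\overline{\bv}\rangle\}$ (after applying $z-\overline{z}=2i\Im z$). Then, since $\mA\bu=\mB^*\bp$ and $\bv=\mB\bu$, the duality identity $\langle\bp,\overline{\bv}\rangle = \langle\bp,\mB\overline{\bu}\rangle = \langle\mB^*\bp,\overline{\bu}\rangle=\langle\mA\bu,\overline{\bu}\rangle$ produces the claimed identity~\eqref{Contractivity}. Finally, contractivity follows from~\eqref{Assumption3} (which follows from~\eqref{Assumption1} and~\eqref{Assumption2}): $\Im\{\langle\mA\bu,\overline{\bu}\rangle\}\leq 0$.

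The main obstacle I expect is the careful bookkeeping of signs and conjugations in step (iii), because the duality pairing of the paper does \emph{not} incorporate complex conjugation, so one must repeatedly use $\overline{\mT}=\mT$ and the sesquilinearity-free identity $\overline{\langle\bp,\overline{\bv}\rangle}=\langle\overline{\bp},\bv\rangle$ to land on exactly the factor $4\,\Im\{\langle\mA\bu,\overline{\bu}\rangle\}$ with the correct sign. A secondary subtlety is to justify that the pseudo-inverse expression is legitimate: this requires both parts of Lemma~\ref{ElementaryPropScattering}, with \textit{ii)} supplying solvability and \textit{i)} guaranteeing well-definedness modulo $\mrm{Ker}(\mA)\cap\mrm{Ker}(\mB)$.
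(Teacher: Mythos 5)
Your proposal follows essentially the same route as the paper's proof: existence and uniqueness via Proposition~\ref{IsomorphismCauchyTraces}, the explicit formula by producing a Moore--Penrose preimage of $\mB^*\bq$ under $\mA-i\mB^*\mT\mB$ and reading off the Cauchy pair $(\mB\bu,\bq+i\mT\mB\bu)$, and the energy identity by expanding $\mT^{-1}$-norms and converting $\langle\bp,\overline{\bv}\rangle$ into $\langle\mA\bu,\overline{\bu}\rangle$ by duality. (The paper instead expands $\Vert\bq+2i\mT\mB\bu\Vert_{\mT^{-1}}^2$ and lets the $\Vert\mB\bu\Vert_{\mT}^2$ terms cancel, but this is the same computation organized differently; your added remark on independence of the choice of preimage is a nice, if unnecessary, touch.)

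The one concrete flaw is a sign slip in step (iii). Expanding the cross terms with $z\coloneq\langle\bp,\overline{\bv}\rangle$ and $\langle\bv,\overline{\bp}\rangle=\overline{z}$ gives
\begin{equation*}
  \Vert\bp+i\mT\bv\Vert_{\mT^{-1}}^{2}-\Vert\bp-i\mT\bv\Vert_{\mT^{-1}}^{2}
  = -2i\,z+2i\,\overline{z} = -2i\,(z-\overline{z}) = +4\,\Im\{\langle\bp,\overline{\bv}\rangle\},
\end{equation*}
not $-4\,\Im\{\langle\bp,\overline{\bv}\rangle\}$ as you wrote. Taken literally, your intermediate formula would yield $\Vert\mS(\bq)\Vert_{\mT^{-1}}^{2}=\Vert\bq\Vert_{\mT^{-1}}^{2}-4\,\Im\{\langle\mA\bu,\overline{\bu}\rangle\}\geq\Vert\bq\Vert_{\mT^{-1}}^{2}$ under~\eqref{Assumption3}, i.e.\ expansivity rather than contractivity, and it contradicts the identity~\eqref{Contractivity} you claim to land on. With the corrected sign the argument goes through exactly as in the paper.
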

\begin{proof}
Proposition~\ref{IsomorphismCauchyTraces} shows that Equation
\eqref{ScatteringOperator} unequivocally defines the operator
$\mS$. Next, pick $\bq\in\mbV_h(\Sigma)^{*}$ arbitrary. According to
\textit{ii)} of Lemma~\ref{ElementaryPropScattering}, there exists
$\tilde{\bu}\in \mbV_h(\Omega)$ such that
$(\mA-i\mB^*\mT\mB)\tilde{\bu} = \mB^{*}\bq$. Let us set $\bu\coloneq
(\mA-i\mB^*\mT\mB)^{\dagger}(\mA-i\mB^*\mT\mB)\tilde{\bu}$. Then,
according to the standard properties of Moore-Penrose pseudo-inverse
$\mM\cdot\mM^{\dagger}\cdot\mM = \mM$, with $\mM = \mA-i\mB^*\mT\mB$.
So, we conclude $(\mA-i\mB^*\mT\mB)\bu = (\mA-i\mB^*\mT\mB)\tilde{\bu} = \mB^*\bq$.

Now set $\bp = \bq + i\mT\mB\bu$. We have $\mA\bu - \mB^*\bp
= 0$ hence $(\mB\bu,\bp)\in\mathscr{C}_h(\mA)$. Since $\bq
= \bp - i\mT\mB\bu$, Equation~\eqref{ScatteringOperator} gives $\mS(\bq) = \bp  +i\mT\mB\bu = \bq + 2i\mT\mB\bu
= (\Id + 2i\mT\mB(\mA-i\mB^*\mT\mB)^{\dagger}\mB^*)\bq$ for all
$\bq\in \mbV_h(\Sigma)^*$, which confirms the alternative
expression of $\mS$ stated above. The contractivity identity~\eqref{Contractivity} is established as follows
\begin{equation*}
  \begin{aligned}
    \Vert \mS(\bq)\Vert_{\mT^{-1}}^{2} 
    & = \Vert \bq + 2i\mT\mB\bu \Vert_{\mT^{-1}}^{2}\\
    & = \Vert \bq\Vert_{\mT^{-1}}^{2} + 4\Vert \mB\bu\Vert_{\mT}^{2} +
    4\Im\{\langle \bq,\mB\overline{\bu}\rangle\}\\
    & = \Vert \bq\Vert_{\mT^{-1}}^{2} + 4\Vert \mB\bu\Vert_{\mT}^{2} +
    4\Im\{\langle \mA\bu,\overline{\bu}\rangle\} - 4\Im\{i\langle \mB^*\mT\mB\bu,\overline{\bu}\rangle \} \\
    & = \Vert \bq\Vert_{\mT^{-1}}^{2}+4\Im\{\langle \mA\bu,\overline{\bu}\rangle\}.
  \end{aligned}
\end{equation*}
\end{proof}

\noindent 
The scattering operator actually gives a characterization of the space of Cauchy data \(\mathscr{C}_h(\mA)\).

\begin{lem}\label{ScatteringOperatorChara}\quad\\
  Assume one of the hypotheses of Lemma~\ref{ElementaryPropScattering}. For any \((\bv,\bp)\in \mathscr{H}_h(\Sigma)\)
  \begin{align*}
    (\bv,\bp)\in \mathscr{C}_h(\mA) \Longleftrightarrow \bp+i\mT\bv = \mS(\bp-i\mT\bv). 
  \end{align*}
\end{lem}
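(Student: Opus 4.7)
The forward implication is essentially tautological: by construction, the scattering operator $\mS$ defined in Proposition~\ref{ProposistionContractivity} satisfies Equation~\eqref{ScatteringOperator}, which is precisely the identity we wish to establish for $(\bv,\bp)\in \mathscr{C}_h(\mA)$. So the first step of the proof will simply be to recall~\eqref{ScatteringOperator}.

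For the reverse implication, the natural tool is the direct sum decomposition $\mathscr{H}_h(\Sigma) = \mathscr{C}_h(\mA)\oplus \mathscr{G}(i\mT)$ provided by Proposition~\ref{CauchySetDecomposition}. Given an arbitrary $(\bv,\bp)\in \mathscr{H}_h(\Sigma)$ satisfying $\bp+i\mT\bv = \mS(\bp-i\mT\bv)$, the plan is to split it uniquely as $(\bv,\bp) = (\bv_1,\bp_1)+(\bv_2,\bp_2)$ with $(\bv_1,\bp_1)\in \mathscr{C}_h(\mA)$ and $(\bv_2,\bp_2)\in \mathscr{G}(i\mT)$, so in particular $\bp_2 = i\mT\bv_2$.

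The key observation is then that the two combinations $\bp\pm i\mT\bv$ behave asymmetrically under this splitting: since $\bp_2-i\mT\bv_2 = 0$, one has $\bp-i\mT\bv = \bp_1-i\mT\bv_1$, whereas $\bp+i\mT\bv = \bp_1+i\mT\bv_1+2i\mT\bv_2$. Applying $\mS$ to the first equality and using the already-proven forward implication on $(\bv_1,\bp_1)\in\mathscr{C}_h(\mA)$ yields $\mS(\bp-i\mT\bv) = \bp_1+i\mT\bv_1$. Combining this with the hypothesis and the expression for $\bp+i\mT\bv$ gives $2i\mT\bv_2 = 0$. Since $\mT$ is positive definite (hence invertible) by~\eqref{ChoiceImpedance}, this forces $\bv_2 = 0$, and then $\bp_2 = i\mT\bv_2 = 0$. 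Consequently $(\bv,\bp) = (\bv_1,\bp_1)\in \mathscr{C}_h(\mA)$, which is the desired conclusion.

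There is essentially no obstacle: the whole proof rests on the direct sum decomposition of Proposition~\ref{CauchySetDecomposition} and the definition of $\mS$. The only mild subtlety is bookkeeping the fact that the ``ingoing'' trace $\bp-i\mT\bv$ sees only the $\mathscr{C}_h(\mA)$ component of $(\bv,\bp)$, while the ``outgoing'' trace $\bp+i\mT\bv$ additionally picks up a contribution $2i\mT\bv_2$ from the graph space component. This asymmetry is precisely what lets one recover the component in $\mathscr{G}(i\mT)$ from the defect between $\mS(\bp-i\mT\bv)$ and $\bp+i\mT\bv$.
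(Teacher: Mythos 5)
Your proof is correct and follows exactly the route the paper indicates for this lemma, namely the defining relation~\eqref{ScatteringOperator} for the forward implication and the direct sum decomposition $\mathscr{H}_h(\Sigma)=\mathscr{C}_h(\mA)\oplus\mathscr{G}(i\mT)$ of Proposition~\ref{CauchySetDecomposition} for the converse. The paper only sketches this argument by reference to the continuous-level analogue, and your write-up fills in the details accurately.
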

\begin{proof}
  The proof relies on~\eqref{ScatteringOperator} and Proposition~\ref{CauchySetDecomposition}, and is similar to the one in~\cite[Lemma 7.3]{MR4665035}.
\end{proof}

We can now continue with the reformulation of Problem~\eqref{IntialBVP4}.

\begin{prop}\label{SkeletonFormulation1}\quad\\
  Assume one of the hypotheses of Lemma~\ref{ElementaryPropScattering}. If $(\bu,\bp)\in
  \mbV_h(\Omega)\times\mbV_h(\Sigma)^*$ satisfies~\eqref{IntialBVP4}
  for some $\bell \in\mrm{Im}(\mA)+\mrm{Im}(\mB^*)$, then the pair
  $(\bq_+,\bq_-) \coloneq (\bp+i\mT\mB\bu,\bp-i\mT\mB\bu )\in
  \mbV_h(\Sigma)^*\times \mbV_h(\Sigma)^*$ satisfies
  \begin{equation}\label{IntialBVP5}
    \begin{aligned}
      \bq_+-\mS(\bq_-) & = 2i\mT\mB(\mA-i\mB^*\mT\mB)^{\dagger}\bell\\
      \Pi(\bq_+)+\bq_- & = 0.
    \end{aligned}
  \end{equation}
  Conversely, if $(\bq_+,\bq_-)\in \mbV_h(\Sigma)^*\times
  \mbV_h(\Sigma)^*$ satisfies~\eqref{IntialBVP5} for some $\bell\in
  \mrm{Im}(\mA)+\mrm{Im}(\mB^*)$, then the pair $(\bu,\bp)\in
  \mbV_h(\Omega)\times\mbV_h(\Sigma)^{*}$, given by $\bu =
  (\mA-i\mB^*\mT\mB)^{\dagger}(\mB^*\bq_-+\bell)$ and $\bp =
  (\bq_++\bq_-)/2$, solves~\eqref{IntialBVP4}.
\end{prop}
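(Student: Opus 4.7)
The plan is to translate the coupled volume/impedance system~\eqref{IntialBVP4} into the impedance variables $\bq_\pm \coloneq \bp \pm i\mT\mB\bu$, and then to invert this translation in the converse direction. The second equation of~\eqref{IntialBVP5}, namely $\Pi(\bq_+)+\bq_-=0$, is just a direct rewriting of the transmission condition $-\bp + i\mT\mB\bu = \Pi(\bp+i\mT\mB\bu)$, so the only real content concerns the first equation of~\eqref{IntialBVP5}, which must encode both the volume equation $\mA\bu-\mB^*\bp=\bell$ and the definition of the scattering operator.

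For the direct implication, I would start from $\mA\bu = \mB^*\bp + \bell$ and subtract $i\mB^*\mT\mB\bu$ on both sides to obtain the identity $(\mA - i\mB^*\mT\mB)\bu = \mB^*\bq_- + \bell$. Thanks to \textit{ii)} of Lemma~\ref{ElementaryPropScattering}, one has $\bell \in \mrm{Im}(\mA)+\mrm{Im}(\mB^*)=\mrm{Im}(\mA-i\mB^*\mT\mB)$, and $\mB^*\bq_-$ trivially belongs to the same subspace; since $\mA-i\mB^*\mT\mB$ composed with its pseudo-inverse acts as the identity on its own image (see~\eqref{MoorePenroseRange}), the element $\bu^{*}\coloneq(\mA-i\mB^*\mT\mB)^{\dagger}(\mB^*\bq_-+\bell)$ satisfies the same identity as $\bu$. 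Hence $\bu - \bu^{*} \in \mrm{Ker}(\mA-i\mB^*\mT\mB) = \mrm{Ker}(\mA)\cap\mrm{Ker}(\mB)$ by \textit{i)} of the same lemma, which forces $\mB\bu = \mB\bu^{*}$ and therefore $\bq_+ - \bq_- = 2i\mT\mB\bu = 2i\mT\mB\bu^{*}$. Substituting the explicit expression $\mS = \Id + 2i\mT\mB(\mA-i\mB^*\mT\mB)^{\dagger}\mB^*$ from Proposition~\ref{ProposistionContractivity} rearranges this equality into the first equation of~\eqref{IntialBVP5}.

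For the converse, given $(\bq_+,\bq_-)$ satisfying~\eqref{IntialBVP5}, I would set $\bu$ and $\bp$ as prescribed in the statement and run the previous arguments backwards. The same range argument gives $(\mA-i\mB^*\mT\mB)\bu = \mB^*\bq_- + \bell$, while substituting the explicit formula for $\mS$ into the first equation of~\eqref{IntialBVP5} yields $\bq_+ - \bq_- = 2i\mT\mB\bu$. Combined with $\bp = (\bq_++\bq_-)/2$, this produces $\bp \pm i\mT\mB\bu = \bq_\pm$, whence in particular $\bq_- + i\mT\mB\bu = \bp$; substituting this into the volume identity recovers $\mA\bu - \mB^*\bp = \bell$. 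The transmission condition of~\eqref{IntialBVP4} is then obtained by rewriting $\Pi(\bq_+)+\bq_- = 0$ in the recovered variables.

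The main subtlety, in both directions, is that $\mA - i\mB^*\mT\mB$ need not be invertible, so the Moore-Penrose pseudo-inverse must be manipulated carefully: at each step one has to check that the relevant right-hand side lies in $\mrm{Im}(\mA-i\mB^*\mT\mB)$ and, crucially in the forward direction, to invoke the inclusion $\mrm{Ker}(\mA-i\mB^*\mT\mB)\subset\mrm{Ker}(\mB)$ (guaranteed by invertibility or by Assumption~\eqref{Assumption2}) in order to conclude that the true volume unknown $\bu$ and the pseudo-inverse candidate $\bu^{*}$ share the same $\mB$-trace.
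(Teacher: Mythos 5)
Your proposal is correct and follows essentially the same route as the paper's proof: both directions hinge on rewriting the volume equation as $(\mA-i\mB^*\mT\mB)\bu=\mB^*\bq_-+\bell$, on the identity $\mrm{Im}(\mA)+\mrm{Im}(\mB^*)=\mrm{Im}(\mA-i\mB^*\mT\mB)$ and the inclusion $\mrm{Ker}(\mA-i\mB^*\mT\mB)\subset\mrm{Ker}(\mB)$ from Lemma~\ref{ElementaryPropScattering}, and on the explicit formula for $\mS$. The only cosmetic difference is that in the forward direction you introduce the auxiliary element $\bu^{*}=(\mA-i\mB^*\mT\mB)^{\dagger}(\mB^*\bq_-+\bell)$ and compare it to $\bu$ modulo the kernel, whereas the paper writes $\mB\bu=\mB(\mA-i\mB^*\mT\mB)^{\dagger}(\mA-i\mB^*\mT\mB)\bu$ directly via properties \textit{(iii)} and~\eqref{MoorePenroseKernel}; these are the same argument.
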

\begin{proof}
If $(\bu,\bp)\in \mbV_h(\Omega)\times\mbV_h(\Sigma)^*$ satisfies~\eqref{IntialBVP4}, let us set $\bq_{\pm} = \bp\pm
i\mT\mB(\bu)$ and $\bfg = 2i\mT\mB(\mA-i\mB^*\mT\mB)^{\dagger}\bell$.
Then, the second equation of~\eqref{IntialBVP4} readily implies
$\Pi(\bq_+)+\bq_- = 0$. On the other hand, rearranging the terms
in the first equation of~\eqref{IntialBVP4} yields
$(\mA-i\mB^*\mT\mB)\bu = \mB^{*}\bq_- + \bell$, which can be
multiplied by $(\mA-i\mB^*\mT\mB)^{\dagger}$. Since $\mrm{Ker}(\mA-i\mB^*\mT\mB)\subset
\mrm{Ker}(\mB)$ according to Lemma~\ref{ElementaryPropScattering}, using (\textit{iii}) from~\eqref{MoorePenroseProperties} and~\eqref{MoorePenroseKernel} we
obtain $\mB(\bu)= \mB(\mA-i\mB^*\mT\mB)^{\dagger}(\mA-i\mB^*\mT\mB)\bu
= \mB(\mA-i\mB^*\mT\mB)^{\dagger}(\mB^{*}\bq_-+\bell)$. Then,
using the expression of the scattering operator provided by
Proposition~\ref{ProposistionContractivity}, this yields
$\bq_-+2i\mT\mB(\bu) = \mS(\bq_-)+\bfg$, which rewrites $\bq_+
= \mS(\bq_-)+\bfg$.

Now assume that $(\bq_+,\bq_-)\in \mbV_h(\Sigma)^*\times
\mbV_h(\Sigma)^*$ satisfies~\eqref{IntialBVP5}. Setting $\bu \coloneq
(\mA-i\mB^*\mT\mB)^{\dagger}(\mB^*\bq_-+\bell)$, the first line of~\eqref{IntialBVP5} indicates $\bq_+-\bq_- = 2i\mT\mB(\bu)$ and,
with $\bp \coloneq (\bq_++\bq_-)/2$, this yields $\bq_\pm =
\bp\pm i\mT\mB(\bu)$. Hence, from the second line of~\eqref{IntialBVP5} we deduce
$\Pi(\bp+i\mT\mB\bu)+\bp-i\mT\mB\bu = 0$. Next, since
$\mB^{*}\bq_-+\bell\in \mrm{Im}(\mA)+\mrm{Im}(\mB^*) =
\mrm{Im}(\mA-i\mB^*\mT\mB)$ by Lemma~\ref{ElementaryPropScattering}, using (\textit{iv}) from~\eqref{MoorePenroseProperties} and~\eqref{MoorePenroseRange} we obtain $(\mA-i\mB^*\mT\mB)\bu =
(\mA-i\mB^*\mT\mB)(\mA-i\mB^*\mT\mB)^{\dagger}(\mB^*\bq_-+\bell) =
\mB^*\bq_-+\bell$, which rewrites $\mA\bu =
\mB^*(\bq_-+i\mT\mB\bu)+\bell = \mB^*(\bp)+\bell$.
So $(\bu,\bp)$ solves Problem~\eqref{IntialBVP4}. 
\end{proof}

\begin{rem}\quad\\
  The assumption $\bell \in\mrm{Im}(\mA)+\mrm{Im}(\mB^*)$ should not
  be regarded as a limitation of the present analysis since Problem~\eqref{IntialBVP4} can only hold under such a condition.
\end{rem}

Proposition~\ref{SkeletonFormulation1} above has exhibited a
reformulation of~\eqref{IntialBVP4} as a $2\times 2$ system of
equations posed only on the skeleton $\Sigma$ of the subdomain
partition. Left multiplying the first equation of~\eqref{IntialBVP5}
by $\Pi$, and using the second equation to write $\bq_- = -\Pi(\bq_+)$,
we obtain the final substructured discrete formulation of our Helmholtz problem: 

\begin{cor}[Generalized Optimized Schwarz Method]\label{cor:skeleton_problem}\quad\\
  Assume one of the hypotheses of Lemma~\ref{ElementaryPropScattering}. If $(\bu,\bp)\in
  \mbV_h(\Omega)\times\mbV_h(\Sigma)^*$ satisfies~\eqref{IntialBVP4}
  for some $\bell \in\mrm{Im}(\mA)+\mrm{Im}(\mB^*)$, then $\bq =
  \bp-i\mT\mB\bu$ solves the problem
  \definecolor{grisclair}{gray}{0.9}
  \begin{empheq}[box={\setlength{\fboxsep}{10pt}\colorbox{grisclair}}]{equation}\label{IntialBVP6}
    \begin{aligned}
      & \bq\in \mbV_h(\Sigma)^*\quad \text{and}\\
      & (\Id+\Pi\mS)\bq = -2i\Pi\mT\mB(\mA-i\mB^*\mT\mB)^{\dagger}\bell
    \end{aligned}
  \end{empheq}
  Conversely, if $\bq$ satisfies~\eqref{IntialBVP6},
  then the pair $(\bu,\bp)\in
  \mbV_h(\Omega)\times\mbV_h(\Sigma)^{*}$ given by $\bu =
  (\mA-i\mB^*\mT\mB)^{\dagger}(\mB^*\bq+\bell)$ and $\bp =
  (\bq-\Pi(\bq))/2$ solves~\eqref{IntialBVP4}.
\end{cor}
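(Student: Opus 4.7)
The plan is to derive~\eqref{IntialBVP6} from the $2\times 2$ skeleton system~\eqref{IntialBVP5} of Proposition~\ref{SkeletonFormulation1} by using the second equation to eliminate one of the two unknowns, exploiting the key property $\Pi^{2} = \Id$ from Lemma~\ref{CaracExchangeOp}. The hard analytic work (involving the scattering operator, the Moore--Penrose pseudo-inverse, and the hypotheses of Lemma~\ref{ElementaryPropScattering}) is already encapsulated in Proposition~\ref{SkeletonFormulation1}; what remains is essentially a bookkeeping calculation. I do not anticipate a serious obstacle here.

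For the forward direction, assume $(\bu,\bp)$ satisfies~\eqref{IntialBVP4} with $\bell\in\mrm{Im}(\mA)+\mrm{Im}(\mB^{*})$. Proposition~\ref{SkeletonFormulation1} then yields a pair $(\bq_{+},\bq_{-})$ that solves~\eqref{IntialBVP5}, with $\bq_{\pm} = \bp\pm i\mT\mB\bu$. Setting $\bq \coloneq \bq_{-} = \bp - i\mT\mB\bu$ as in the statement, I would apply $\Pi$ to the identity $\Pi(\bq_{+}) + \bq_{-} = 0$ and invoke involutivity to obtain $\bq_{+} = -\Pi(\bq)$. Then I would left-multiply the first equation of~\eqref{IntialBVP5} by $\Pi$, substitute $\Pi(\bq_{+}) = -\bq$, and regroup to produce
\begin{equation*}
  (\Id + \Pi\mS)\bq = -2i\Pi\mT\mB(\mA-i\mB^{*}\mT\mB)^{\dagger}\bell,
\end{equation*}
which is exactly~\eqref{IntialBVP6}.

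For the converse, suppose $\bq$ satisfies~\eqref{IntialBVP6}. I would introduce $\bq_{-} \coloneq \bq$ and $\bq_{+} \coloneq -\Pi(\bq)$, and verify that $(\bq_{+},\bq_{-})$ satisfies both equations of~\eqref{IntialBVP5}: the relation $\Pi(\bq_{+}) + \bq_{-} = -\Pi^{2}(\bq) + \bq = 0$ is immediate from $\Pi^{2} = \Id$, while the first equation of~\eqref{IntialBVP5} is obtained by applying $\Pi$ to~\eqref{IntialBVP6} and using involutivity once more. Proposition~\ref{SkeletonFormulation1} then provides a solution $(\bu,\bp)$ of~\eqref{IntialBVP4} with $\bu = (\mA-i\mB^{*}\mT\mB)^{\dagger}(\mB^{*}\bq_{-}+\bell)$ and $\bp = (\bq_{+}+\bq_{-})/2$. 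Substituting the definitions of $\bq_{\pm}$ yields $\bu = (\mA-i\mB^{*}\mT\mB)^{\dagger}(\mB^{*}\bq+\bell)$ and $\bp = (\bq - \Pi(\bq))/2$, which match the stated expressions and conclude the proof.
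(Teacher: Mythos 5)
Your proposal is correct and follows exactly the route the paper itself indicates in the sentence preceding the corollary: reduce to Proposition~\ref{SkeletonFormulation1}, use the involution $\Pi^{2}=\Id$ to eliminate $\bq_{+}$ via $\bq_{+}=-\Pi(\bq_{-})$, and left-multiply the first equation of~\eqref{IntialBVP5} by $\Pi$. Both directions are handled as the paper intends, so there is nothing to add.
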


\noindent 
Problem~\eqref{IntialBVP6} is the GOSM formulation, with an unknown $\bq$ on the skeleton $\Sigma$ of the domain partition.
Let us underline the structure of the operator $\Id+\Pi\mS$ appearing
in the left-hand side of~\eqref{IntialBVP6}. Since both $\Pi$
and $\mS$ are contractions\textsuperscript{\ref{note:contraction}} (see Lemma~\ref{CaracExchangeOp} and
Proposition~\ref{ProposistionContractivity}) when Assumption~\eqref{Assumption2} holds, the operator $\Pi\mS$ is
itself a contraction and the operator $\Id+\Pi\mS$ takes the form of 
``identity + contraction''. This guarantees the positivity for
this operator, a desirable property in the perspective of the
iterative solver defined in Algorithm~\ref{alg:iterative_procedure}.

\section{Analysis of resonances}
\label{sec:resonances}

As already pointed out at the end of Section~\ref{sec:discr}, the operator $\mA_{\Omega\times \Gamma}$ of Formulation~\eqref{IntialBVP3}, that
describes our boundary value problem, is not necessarily
invertible. This may occur when modelling physical resonance
phenomena, but may also occur in the case of spurious resonances
stemming from a FEM-BEM coupling, as discussed at the end of
Section~\ref{FEMBEMCoupling}. In the present section, we shall
investigate how this situation transfers to our domain decomposition skeleton formulation~\eqref{IntialBVP6}. Our analysis is based on an intermediary map \(\Phi\) that we study in detail.

The next result involves $\mB^{\dagger}\colon \mbV_h(\Sigma)\to \mbV_h(\Omega)$, which is called \emph{harmonic lifting map} in the literature on domain decomposition (see e.g.~\cite[\S 4.4]{MR2104179} or~\cite[\S 1.2.6]{MR3013465}). We remind that $\mB^{\dagger}$ is the Moore-Penrose pseudo-inverse of the (surjective) trace map
$\mB\colon \mbV_h(\Omega)\to \mbV_h(\Sigma)$, see Section~\ref{NotationConventions} and in particular~\eqref{CaracPseudoInverse}
for a characterization.

\begin{prop}\label{IsomorphismKernels1}\quad\\
  Assume one of the hypotheses of Lemma~\ref{ElementaryPropScattering}. Define the operator $\Phi\colon 
  \mrm{Ker}(\mA_{\Omega\times\Gamma})\to \mbV_h(\Sigma)^*$ by the formula
  $\Phi(\bu) \coloneq (\mB^{\dagger})^*\cdot(\mA-i\mB^*\mT\mB)\cdot\mathcal{R}(\bu)$.
  Then $\bu\in \mrm{Ker}(\Phi)\iff\mathcal{R}(\bu)\in \mrm{Ker}(\mA)\cap \mrm{Ker}(\mB)$, 
  and $\mrm{dim}\,\mrm{Ker}(\Phi) = \mrm{dim}(\mrm{Ker}(\mA)\cap \mrm{Ker}(\mB))$.
\end{prop}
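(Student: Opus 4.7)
The plan is to rewrite $\Phi(\bu)$ in a more transparent form by exploiting the fact that $\bu \in \mrm{Ker}(\mA_{\Omega\times\Gamma})$, and then read off both statements using the orthogonal decomposition of Lemma~\ref{OrthogonalDecomposition}.

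Let me first set $\bw \coloneq \mathcal{R}(\bu) \in \mbX_h(\Omega)$. Using the factorization~\eqref{DecompositionMainOperator}, the assumption $\mA_{\Omega\times\Gamma}(\bu) = 0$ rewrites as $\mathcal{R}^*\mA(\bw) = 0$, which means $\mA(\bw) \in \mrm{Im}(\mathcal{R})^{\circ} = \mbX_h(\Omega)^{\circ}$. By \textit{ii)} of Lemma~\ref{ReformulationTransmission}, $\mbX_h(\Omega)^{\circ} = \mB^*(\mbX_h(\Sigma)^{\circ})$, hence there exists $\bp \in \mbX_h(\Sigma)^{\circ}$ with $\mA(\bw) = \mB^*(\bp)$. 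Since $\mB$ is surjective, $\mB \cdot \mB^{\dagger} = \Id$ and therefore $(\mB^{\dagger})^* \cdot \mB^* = \Id$, which gives the key simplification
\begin{equation*}
    \Phi(\bu) = (\mB^{\dagger})^*\bigl(\mB^*\bp - i\mB^*\mT\mB(\bw)\bigr) = \bp - i\mT\mB(\bw).
\end{equation*}
Furthermore, since $\bw \in \mbX_h(\Omega)$, the commutativity $\mB\mathcal{R} = \mR\mathcal{B}$ and the definition~\eqref{DefinitionSingleTraceSpace} of $\mbX_h(\Sigma)$ yield $\mB(\bw) \in \mbX_h(\Sigma)$.

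Now for the first equivalence: the direction $(\Leftarrow)$ is trivial since $\mathcal{R}(\bu) \in \mrm{Ker}(\mA)\cap\mrm{Ker}(\mB)$ immediately gives $(\mA - i\mB^*\mT\mB)\mathcal{R}(\bu) = 0$, hence $\Phi(\bu) = 0$. For $(\Rightarrow)$, if $\Phi(\bu) = 0$, then $\bp = i\mT\mB(\bw)$, so $i\mT\mB(\bw) \in \mbX_h(\Sigma)^{\circ}$, i.e.~$\mB(\bw) \in \mT^{-1}(\mbX_h(\Sigma)^{\circ})$. Combined with $\mB(\bw) \in \mbX_h(\Sigma)$, the orthogonal decomposition of Lemma~\ref{OrthogonalDecomposition} forces $\mB(\bw) = 0$, whence $\bp = 0$ and $\mA(\bw) = \mB^*(\bp) = 0$. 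Therefore $\mathcal{R}(\bu) = \bw \in \mrm{Ker}(\mA) \cap \mrm{Ker}(\mB)$.

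For the dimensional identity, I use that $\mathcal{R}\colon \Vh(\Omega,\Gamma) \to \mbV_h(\Omega)$ is injective (its image is $\mbX_h(\Omega)$), so it restricts to an injection from $\mrm{Ker}(\Phi)$ into $\mrm{Ker}(\mA)\cap\mrm{Ker}(\mB)$ by the equivalence just proved. It only remains to show surjectivity of this restriction. Given $\bw \in \mrm{Ker}(\mA)\cap\mrm{Ker}(\mB)$, the condition $\mB(\bw) = 0 \in \mbX_h(\Sigma)$ together with \textit{i)} of Lemma~\ref{ReformulationTransmission} yields $\bw \in \mB^{-1}(\mbX_h(\Sigma)) = \mbX_h(\Omega) = \mrm{Im}(\mathcal{R})$, so there exists $\bu \in \Vh(\Omega,\Gamma)$ with $\mathcal{R}(\bu) = \bw$, and then $\mA_{\Omega\times\Gamma}(\bu) = \mathcal{R}^*\mA(\bw) = 0$.

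The only nontrivial step is recognizing that the identity $(\mB^{\dagger})^*\mB^* = \Id$ collapses $\Phi(\bu)$ down to the single impedance trace $\bp - i\mT\mB(\bw)$ after the $\mrm{Ker}(\mA_{\Omega\times\Gamma})$ hypothesis has been used to extract the Neumann trace $\bp$; once this is seen, the rest is a direct application of Lemmas~\ref{ReformulationTransmission} and~\ref{OrthogonalDecomposition}. Notice that the proof does not explicitly invoke either of the two hypotheses of Lemma~\ref{ElementaryPropScattering}, which makes sense since the operator $\Phi$ only involves $\mB^{\dagger}$, not the pseudo-inverse of $\mA - i\mB^*\mT\mB$.
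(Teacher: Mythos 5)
Your proof is correct, and while the opening moves coincide with the paper's (extracting the Neumann trace $\bp\in\mbX_h(\Sigma)^{\circ}$ via \textit{ii)} of Lemma~\ref{ReformulationTransmission} and collapsing $\Phi(\bu)$ to $\bp-i\mT\mB\mathcal{R}(\bu)$ through $(\mB^{\dagger})^*\mB^*=\Id$), you diverge at the decisive injectivity step. The paper observes that $(\bv,\bp)\coloneq(\mB\mathcal{R}(\bu),\bp)$ lies in $\mathscr{C}_h(\mA)$ and invokes Proposition~\ref{IsomorphismCauchyTraces}, i.e.\ the injectivity of $(\bv,\bp)\mapsto\bp-i\mT\bv$ on the Cauchy data set, which is precisely where the hypotheses of Lemma~\ref{ElementaryPropScattering} enter (that injectivity rests on the sign property~\eqref{Assumption3} or on invertibility of $\mA-i\mB^*\mT\mB$). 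You instead exploit the \emph{other} piece of information, namely that $(\bv,\bp)$ lies in $\mathscr{X}_h(\Sigma)=\mbX_h(\Sigma)\times\mbX_h(\Sigma)^{\circ}$: the relation $\bp=i\mT\bv$ places $\bv$ in $\mbX_h(\Sigma)\cap\mT^{-1}(\mbX_h(\Sigma)^{\circ})=\{0\}$ by the direct sum of Lemma~\ref{OrthogonalDecomposition}, which needs only that $\mT$ satisfies~\eqref{ChoiceImpedance}. Your closing observation is therefore accurate and is the genuine payoff of your route: the statement of Proposition~\ref{IsomorphismKernels1} holds for any admissible impedance operator, with no appeal to Assumption~\eqref{Assumption2} or to invertibility of $\mA-i\mB^*\mT\mB$, whereas the paper's argument as written does consume that hypothesis. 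You are also slightly more explicit than the paper on the surjectivity of $\mathcal{R}\vert_{\mrm{Ker}(\Phi)}$ onto $\mrm{Ker}(\mA)\cap\mrm{Ker}(\mB)$, which the paper leaves implicit in the final dimension count.
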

\begin{proof}
Suppose first that $\bu = (u,p)\in \mrm{Ker}(\mA_{\Omega\times\Gamma})$ is arbitrary.
We have $\mathcal{R}(\bu)\in \mrm{Im}(\mathcal{R}) = \mbX_h(\Omega)$ by~\eqref{DefinitionSingleTraceSpace} and $\mA\mathcal{R}(\bu)\in 
\mrm{Ker}(\mathcal{R}^*) = \mbX_h(\Omega)^{\circ}$ by~\eqref{DecompositionMainOperator} and~\eqref{eq:polarOfImage}. According to \textit{ii)} of Lemma~\ref{ReformulationTransmission}, there exists $\bp\in
\mbX_h(\Sigma)^{\circ}$ such that $\mA\mathcal{R}(\bu) = \mB^{*}(\bp)$. 
Then, setting $\bv = \mB\mathcal{R}(\bu)\in \mbX_h(\Sigma) =
\mB(\mbX_h(\Omega))$ by \textit{i)} of Lemma~\ref{ReformulationTransmission}, $(\bv,\bp)\in
\mathscr{C}_h(\mA)\cap \mathscr{X}_h(\Sigma)$~---~see~\eqref{RemarkableSubspaces}. Besides, 
$(\mB^{\dagger})^*\mA\mathcal{R}(\bu) = (\mB^{\dagger})^*\mB^{*}(\bp)
= (\mB\cdot\mB^{\dagger})^{*}(\bp) = \bp$, since \(\mB\) is surjective. This leads to
\begin{equation}\label{IdentitePhiIntermediaire}
  \Phi(\bu) = \bp-i\mT(\bv).
\end{equation}
Since $(\bv,\bp)\in\mathscr{C}_h(\mA)$ and Proposition~\ref{IsomorphismCauchyTraces} holds, we derive $\bu\in \mrm{Ker}(\Phi)\iff \bp-i\mT(\bv) = 0\iff (\bv,\bp) =
0$. By the very definition of $\bv$ and
$\bp$ above, and since $\mB^*\colon \mbV_h(\Sigma)^*\to \mbV_h(\Omega)^*$ is
one-to-one, we have $ (\bv,\bp) = 0 \iff \lbrack\;\bv = 0 =
\mB\mathcal{R}(\bu)$ and $ \mB^{*}(\bp ) = 0 =
\mA\mathcal{R}(\bu)\;\rbrack \iff \mathcal{R}(\bu)\in
\mrm{Ker}(\mA)\cap \mrm{Ker}(\mB)$.

We finish the proof by noting that $\mathcal{R}\colon \Vh(\Omega,\Gamma)\to
\mbV_h(\Omega)$ is one-to-one according to its definition~\eqref{DomainRestrictionOperators}, hence $\mrm{dim}\,\mrm{Ker}(\Phi)
= \mrm{dim}\,\mathcal{R}(\,\mrm{Ker}(\Phi)\,) = \mrm{dim}(\mrm{Ker}(\mA)\cap
\mrm{Ker}(\mB))$. 
\end{proof}

We underline that, in the previous result, the map $\Phi$ is taken restricted to $\mrm{Ker}(\mA_{\Omega\times\Gamma})$. After having studied the kernel
of this map, we now study its range.

\begin{prop}\label{IsomorphismKernels2}\quad\\
  Assume one of the hypotheses of Lemma~\ref{ElementaryPropScattering}. 
  Let $\Phi$ be the operator defined in Proposition~\ref{IsomorphismKernels1}. 
  Then $\mrm{Im}(\Phi) = \Phi(\,\mrm{Ker}(\mA_{\Omega\times\Gamma})\,) = \mrm{Ker}(\Id+\Pi\mS)$.
\end{prop}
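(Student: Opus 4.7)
The plan is to prove the equality $\mrm{Im}(\Phi)=\mrm{Ker}(\Id+\Pi\mS)$ by a double inclusion, both directions leveraging the identity $\Phi(\bu)=\bp-i\mT\bv$ with $(\bv,\bp)\in\mathscr{C}_h(\mA)\cap\mathscr{X}_h(\Sigma)$ that was already established as~\eqref{IdentitePhiIntermediaire} in the proof of Proposition~\ref{IsomorphismKernels1}. The key insight is that on the pair $(\bv,\bp)$ built from $\bu\in\mrm{Ker}(\mA_{\Omega\times\Gamma})$, the local-wave-equation property ($\mathscr{C}_h(\mA)$) and the transmission property ($\mathscr{X}_h(\Sigma)$) translate, via Lemmas~\ref{ScatteringOperatorChara} and~\ref{CaracExchangeOp}, into exactly the two ingredients defining $\mrm{Ker}(\Id+\Pi\mS)$.

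For the forward inclusion $\mrm{Im}(\Phi)\subseteq\mrm{Ker}(\Id+\Pi\mS)$, I would take $\bu\in\mrm{Ker}(\mA_{\Omega\times\Gamma})$, set $\bq:=\Phi(\bu)=\bp-i\mT\bv$, and combine two observations: since $(\bv,\bp)\in\mathscr{C}_h(\mA)$, Lemma~\ref{ScatteringOperatorChara} yields $\mS(\bq)=\bp+i\mT\bv$; since $(\bv,\bp)\in\mathscr{X}_h(\Sigma)$, Lemma~\ref{CaracExchangeOp} yields $\Pi(\bp+i\mT\bv)=-\bp+i\mT\bv=-\bq$. Stringing them together gives $\Pi\mS(\bq)=-\bq$, hence $\bq\in\mrm{Ker}(\Id+\Pi\mS)$.

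For the reverse inclusion, I would take $\bq\in\mrm{Ker}(\Id+\Pi\mS)$, which is precisely~\eqref{IntialBVP6} with $\bell=0$. The converse direction of Corollary~\ref{cor:skeleton_problem} produces $(\tilde\bu,\tilde\bp)\in\mbV_h(\Omega)\times\mbV_h(\Sigma)^*$ solving~\eqref{IntialBVP4} with $\bell=0$, and the converse of the lemma following~\eqref{IntialBVP4} yields $(u,p)\in\Vh(\Omega,\Gamma)$ with $\mathcal{R}(u,p)=\tilde\bu$ and $\mA_{\Omega\times\Gamma}(u,p)=0$. A direct computation using $\mA\tilde\bu=\mB^*\tilde\bp$ (first equation of~\eqref{IntialBVP4}) and $\mB\mB^\dagger=\Id$ from the surjectivity of $\mB$, see~\eqref{CaracPseudoInverse}, simplifies $\Phi(u,p)=(\mB^\dagger)^*(\mA-i\mB^*\mT\mB)\tilde\bu$ into $\tilde\bp-i\mT\mB\tilde\bu$. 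To conclude $\Phi(u,p)=\bq$, I would then apply the direct part of Corollary~\ref{cor:skeleton_problem} to $(\tilde\bu,\tilde\bp)$, which identifies $\tilde\bp-i\mT\mB\tilde\bu$ as the skeleton unknown attached to $(\tilde\bu,\tilde\bp)$.

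The main obstacle is making sure that the converse and direct parts of Corollary~\ref{cor:skeleton_problem} are genuinely inverse to each other on $\mrm{Ker}(\Id+\Pi\mS)$, so that the $\bq$ recovered from $\Phi(u,p)$ coincides with the original one. This can be verified explicitly by expanding $\tilde\bp-i\mT\mB\tilde\bu=(\bq-\Pi\bq)/2-(\mS(\bq)-\bq)/2$ using the formula $\mS=\Id+2i\mT\mB(\mA-i\mB^*\mT\mB)^\dagger\mB^*$, then invoking $\Pi^2=\Id$ together with the hypothesis $(\Id+\Pi\mS)\bq=0$ (which gives $\mS(\bq)=-\Pi\bq$) to collapse the expression down to $\bq$. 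Alternatively, one can bypass this calculation by applying the injectivity part of Proposition~\ref{IsomorphismCauchyTraces} to the pair $(\mB\tilde\bu,\tilde\bp)\in\mathscr{C}_h(\mA)$. Either way, once this bookkeeping is done, the whole argument is a clean composition of the equivalences already proved.
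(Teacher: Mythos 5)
Your forward inclusion is the paper's argument verbatim: starting from~\eqref{IdentitePhiIntermediaire}, the membership $(\bv,\bp)\in \mathscr{C}_h(\mA)$ gives $\mS(\Phi(\bu))=\bp+i\mT\bv$ and the membership in $\mathscr{X}_h(\Sigma)$ gives $\Pi(\bp+i\mT\bv)=-\Phi(\bu)$, exactly as in the paper. For the reverse inclusion you diverge in organization: the paper takes $\bq_-\in\mrm{Ker}(\Id+\Pi\mS)$, sets $\bq_+=\mS(\bq_-)$, solves $\bp\pm i\mT\bv=\bq_\pm$ for $(\bv,\bp)$, reads off $(\bv,\bp)\in\mathscr{C}_h(\mA)\cap\mathscr{X}_h(\Sigma)$ from Lemma~\ref{ScatteringOperatorChara} and~\eqref{CaracAvecEchangeOperator}, and lifts $\bv\in\mbX_h(\Sigma)$ to $\bw=\mathcal{R}(\bu)$ with $\bu\in\mrm{Ker}(\mA_{\Omega\times\Gamma})$ and $\Phi(\bu)=\bq_-$ by construction. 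You instead invoke the homogeneous case ($\bell=0$) of the already-proved equivalence chain (Corollary~\ref{cor:skeleton_problem} and the Lemma relating~\eqref{IntialBVP3} to~\eqref{IntialBVP4}) to manufacture a kernel element $(u,p)$, and then must check separately that $\Phi(u,p)$ returns the original $\bq$. That extra consistency check is the real content of your route, and you handle it correctly: $\Phi(u,p)=\tilde\bp-i\mT\mB\tilde\bu=(\bq-\Pi\bq)/2-(\mS(\bq)-\bq)/2=\bq$, using $\mS=\Id+2i\mT\mB(\mA-i\mB^*\mT\mB)^{\dagger}\mB^*$, $\Pi^2=\Id$ and $\mS(\bq)=-\Pi(\bq)$. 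The paper's direct construction avoids this bookkeeping because it chooses $(\bv,\bp)$ so that $\Phi(\bu)=\bp-i\mT\bv=\bq_-$ is immediate; your version buys reuse of the reformulation corollaries at the price of that verification. Both are complete and correct.
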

\begin{proof}
Recall the description of the map $\Phi$ from the beginning of the
proof of Proposition~\ref{IsomorphismKernels1} and~\eqref{IdentitePhiIntermediaire}. What precedes shows that, if $\bu\in
\mrm{Ker}(\mA_{\Omega\times\Gamma})$, then $\Phi(\bu) = \bp-i\mT(\bv)$
for some $(\bv,\bp)\in \mathscr{C}_h(\mA)\cap\mathscr{X}_h(\Sigma)$.
Applying the definition of the scattering operator $\mS$, see~\eqref{ScatteringOperator}, $\mS\cdot\Phi(\bu) =
\bp+i\mT(\bv)$. Then, applying the characterization of $\mathscr{X}_h(\Sigma)$ with the exchange operator
$\Pi$, see~\eqref{CaracAvecEchangeOperator}, we obtain
$\Pi\cdot\mS\cdot\Phi(\bu) = \Pi(\bp+i\mT(\bv)) = -\bp+i\mT(\bv) =
-\Phi(\bu)$. Hence, $(\Id + \Pi\mS)\Phi(\bu) = 0$ for all \(\bu\in
\mrm{Ker}(\mA_{\Omega\times\Gamma})\), which rewrites
\begin{equation*}
  \Phi(\,\mrm{Ker}(\mA_{\Omega\times\Gamma})\,)\subset \mrm{Ker}(\Id + \Pi\mS).
\end{equation*}

There only remains to show that
$\Phi(\,\mrm{Ker}(\mA_{\Omega\times\Gamma})\,) \supset \mrm{Ker}(\Id +
\Pi\mS)$. Pick an arbitrary $\bq_-\in \mrm{Ker}(\Id + \Pi\mS)$ and
set $\bq_+ = \mS(\bq_-)$. Since $(\Id + \Pi\mS)\bq_- = 0$ we
conclude that $\bq_-+\Pi(\bq_+) = 0$. Next define $(\bv,\bp)\in
\mbV_h(\Sigma)\times \mbV_h(\Sigma)^*$ by
\begin{equation*}
  \left\{
  \begin{array}{l}
    \bp-i\mT\bv = \bq_-\\
    \bp+i\mT\bv = \bq_+
  \end{array}
  \right.
  \quad\iff\quad
  \left\{
  \begin{array}{l}
    \bv = \mT^{-1}(\bq_+ -\bq_-)/(2i)\\
    \bp = (\bq_+ + \bq_-)/2
  \end{array}
  \right.
  .
\end{equation*}
According to the equations satisfied by $(\bq_-,\bq_+)$, we
have $\bp+i\mT\bv = \mS(\bp-i\mT\bv)$ and $-\bp+i\mT\bv =
\Pi(\bp+i\mT\bv)$, which rewrites
$(\bv,\bp)\in\mathscr{C}_h(\mA)\cap\mathscr{X}_h(\Sigma)$ due to~\eqref{CaracAvecEchangeOperator} and Lemma~\ref{ScatteringOperatorChara}.
Since $(\bv,\bp)\in\mathscr{C}_h(\mA)$, there exists $\bw\in
\mbV_h(\Omega)$ such that $\mB(\bw) = \bv$ and $\mA\bw =
\mB^*(\bp)$. On the other hand, as $\bv\in\mbX_h(\Sigma)$, we derive
$\bw\in \mB^{-1}(\mbX_h(\Sigma)) = \mbX_h(\Omega) =
\mathcal{R}(\Vh(\Omega,\Gamma))$, see \textit{i)} of Lemma~\ref{ReformulationTransmission} and~\eqref{DefinitionSingleTraceSpace}. So $\bw = \mathcal{R}(\bu)$ for
some $\bu\in \Vh(\Omega,\Gamma)$, and
$\mA_{\Omega\times\Gamma}(\bu) = \mathcal{R}^*\mA\mathcal{R}(\bu) =
\mathcal{R}^*\mA\bw = \mathcal{R}^*\mB^*\bp = \mathcal{B}^*R^*\bp = 0$ since $\bp\in
\mbX_h(\Sigma)^{\circ} = (\mR(\Vh(\Sigma)))^\circ = \mrm{Ker}\,\mR^*$ by~\eqref{eq:polarOfImage}. As a consequence, $\bu \in
\mrm{Ker}(\mA_{\Omega\times\Gamma})$ and $\Phi(\bu) = \bp-i\mT(\bv) =
\bq_-$. 
\end{proof}

\begin{thm}\label{EquivalenceDimensionResonance}\quad\\
  Assume one of the hypotheses of Lemma~\ref{ElementaryPropScattering}. Then 
  $\mrm{dim}\,\mrm{Ker}(\mA_{\Omega\times\Gamma})
  =\mrm{dim}(\mrm{Ker}(\mA)\cap\mrm{Ker}(\mB) )
  +\mrm{dim}\,\mrm{Ker}(\mrm{Id}+\Pi\mS)$.
\end{thm}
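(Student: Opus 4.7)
The plan is to apply the rank-nullity theorem to the linear map $\Phi\colon \mrm{Ker}(\mA_{\Omega\times\Gamma})\to \mbV_h(\Sigma)^*$ introduced in Proposition~\ref{IsomorphismKernels1}. Since $\mrm{Ker}(\mA_{\Omega\times\Gamma})$ is finite-dimensional, we have the identity
\begin{equation*}
  \dim\,\mrm{Ker}(\mA_{\Omega\times\Gamma}) = \dim\,\mrm{Ker}(\Phi) + \dim\,\mrm{Im}(\Phi).
\end{equation*}

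The two terms on the right-hand side are exactly what the two previous propositions have computed. By Proposition~\ref{IsomorphismKernels1}, $\dim\,\mrm{Ker}(\Phi) = \dim(\mrm{Ker}(\mA)\cap\mrm{Ker}(\mB))$. By Proposition~\ref{IsomorphismKernels2}, $\mrm{Im}(\Phi) = \mrm{Ker}(\Id + \Pi\mS)$, hence $\dim\,\mrm{Im}(\Phi) = \dim\,\mrm{Ker}(\Id+\Pi\mS)$. Substituting both identities into the rank-nullity relation yields the stated formula.

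There is no real obstacle here, since the two preceding propositions do all the substantive work (the analysis of Cauchy data, the isomorphism from Proposition~\ref{IsomorphismCauchyTraces}, and the characterization of $\mathscr{X}_h(\Sigma)$ via $\Pi$). The only thing worth checking in the write-up is that we may invoke rank-nullity, which is immediate since $\Vh(\Omega,\Gamma)$ is finite-dimensional and therefore so is $\mrm{Ker}(\mA_{\Omega\times\Gamma})$. Thus the proof reduces to a one-line combination of Propositions~\ref{IsomorphismKernels1} and~\ref{IsomorphismKernels2}.
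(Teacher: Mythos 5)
Your proposal is correct and follows exactly the same route as the paper: rank--nullity applied to $\Phi$ restricted to $\mrm{Ker}(\mA_{\Omega\times\Gamma})$, combined with Propositions~\ref{IsomorphismKernels1} and~\ref{IsomorphismKernels2}. Nothing is missing.
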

\begin{proof}
Using the rank-nullity theorem, $\mrm{dim}\,\mrm{Ker}(\mA_{\Omega\times\Gamma}) =
\mrm{dim}\,\mrm{Ker}(\Phi) + \mrm{dim}\,\mrm{Im}(\Phi)$. Then
$\mrm{dim}\,\mrm{Ker}(\Phi) =
\mrm{dim}(\mrm{Ker}(\mA)\cap\mrm{Ker}(\mB) )$ by Proposition~\ref{IsomorphismKernels1}, and $\mrm{dim}\,\mrm{Im}(\Phi) =
\mrm{dim}\,\mrm{Ker}(\mrm{Id}+\Pi\mS)$ by Proposition~\ref{IsomorphismKernels2}. This concludes the proof. 
\end{proof}

Note that Theorem~\ref{EquivalenceDimensionResonance} holds
no matter the specific choice of the impedance operator $\mT$, as soon as it induces a scalar
product, i.e.~it complies with~\eqref{ChoiceImpedance}. Theorem~\ref{EquivalenceDimensionResonance} adapts~\cite[Propositions 8.2 and 8.4]{MR4665035} to the discrete setting and, at
the same time, provides an extension since in the present case we do
not assume that $\mrm{Ker}(\mA)\cap \mrm{Ker}(\mB) = \{0\}$. The
case $\mrm{Ker}(\mA)\cap \mrm{Ker}(\mB)\neq \{0\}$ covers situations
where FEM-BEM couplings satisfying Assumption~\eqref{Assumption2} suffer from a spurious resonance phenomenon. 

An interesting consequence of the formula of Theorem~\ref{EquivalenceDimensionResonance} is that, if
$\mA_{\Omega\times\Gamma}$ is invertible, then, systematically, both
$\mA - i\mB^*\mT\mB$ (see Lemma~\ref{ElementaryPropScattering}) and
$\Id+\Pi\mS$ are invertible. On the other hand, if $\mA -
i\mB^*\mT\mB$ is invertible, then $\mrm{Ker}(\mA)\cap\mrm{Ker}(\mB)
= \{0\}$, see Lemma~\ref{ElementaryPropScattering}. This occurs for
example when imposing local boundary conditions such as Dirichlet,
Neumann or Robin conditions on $\Gamma$. In that case the map $\Phi$
establishes an isomorphism between
$\mrm{Ker}(\mA_{\Omega\times\Gamma})$ and
$\mrm{Ker}(\mrm{Id}+\Pi\mS)$.
Finally,~\cite{BoisneaultBonazzoliEtAl2025SRS} establishes that \(\mrm{Ker}(\mA_\Gamma -i\mB_\Gamma^*\mT_\Gamma\mB_\Gamma)\) and \(\mrm{Ker}(\mA_{\Omega\times\Gamma})\) are either simultaneously trivial or non-trivial for the Costabel coupling, i.e., the dimensions are equal. Thus, we deduce from Theorem~\ref{EquivalenceDimensionResonance} that the substructured problem~\eqref{IntialBVP6} is well-posed even at spurious resonances for the Costabel FEM-BEM coupling! 

\section{Inf-sup estimates}\label{sec:infsup_estimates}

We have just seen that, under the hypothesis
$\mrm{Ker}(\mA)\cap\mrm{Ker}(\mB) = \{0\}$, the operators
$\mA_{\Omega\times\Gamma}$ and $\mrm{Id}+\Pi\mS$ are simultaneously
both invertible or both non-invertible. Keeping the hypothesis
$\mrm{Ker}(\mA)\cap\mrm{Ker}(\mB) = \{0\}$, we wish to discuss how the
inf-sup constants of these two operators compare. For this we need to
introduce a few constants. First, we define two constants related
to the boundary trace operator
\begin{equation*}
  \begin{aligned}
    & t_{h}^{-}\coloneq \inf_{\bp\in\mbV_h(\Sigma)^*\setminus\{0\}} \Vert
    \mB^*(\bp)\Vert_{\mbV_h(\Omega)^{*}}/ \Vert
    \bp\Vert_{\mT^{-1}}\\
    & t_{h}^{+}\coloneq
    \sup_{\bp\in\mbV_h(\Sigma)^*\setminus\{0\}} \Vert
    \mB^*(\bp)\Vert_{\mbV_h(\Omega)^{*}}/ \Vert
    \bp\Vert_{\mT^{-1}}
  \end{aligned}
\end{equation*}
These are the extremal singular values of the operator $\mB^{*}$.
These two constants were thoroughly discussed in~\cite[Section
  7]{MR4648528} and~\cite{MR4433119}, where it was shown that an
appropriate choice of impedance operator $\mT$ leads to $t_h^{\pm}$
being $h$-uniformly bounded from below and above. We also need to
consider the continuity modulus
\begin{equation*}
  \Vert \mA\Vert\coloneq \sup_{\bu\in \mbV_h(\Omega)\setminus\{0\}}
  \sup_{\bv\in \mbV_h(\Omega)\setminus\{0\}} \frac{\vert\langle
    \bv,\mA(\bu)\rangle\vert}{\Vert
    \bv\Vert_{\mbV_h(\Omega)}\Vert
    \bu\Vert_{\mbV_h(\Omega)}}.
\end{equation*}
The next theorem compares the inf-sup constants (see definition~\eqref{DefInfSupCst}) of the
undecomposed problem~\eqref{IntialBVP3} and of the skeleton formulation~\eqref{IntialBVP6}. We do not provide the proof of this result: for
$\mrm{Ker}(\mA_{\Omega\times\Gamma})\neq \{0\}$, this inequality is a
direct consequence of Theorem~\ref{EquivalenceDimensionResonance}, and,
for $\mrm{Ker}(\mA_{\Omega\times\Gamma})= \{0\}$, the proof closely
parallels the one of~\cite[Propositions 8.1 and 8.4]{MR4648528}.

\begin{thm}\quad\\
  Assume $\mrm{Ker}(\mA)\cap\mrm{Ker}(\mB) = \{0\}$.
  Then the following estimate holds
  \begin{equation*}
    \infsup_{\Vh(\Omega,\Gamma)\to
      \Vh(\Omega,\Gamma)^*}(\mA_{\Omega\times\Gamma}) \leq (
    (t_h^+)^2+(2\Vert \mA\Vert/t_h^-)^2 )
    \infsup_{\mbV_h(\Sigma)^*\to
      \mbV_h(\Sigma)^*}(\Id+\Pi\mS).
  \end{equation*}
\end{thm}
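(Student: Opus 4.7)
The argument divides into two cases according to whether $\mA_{\Omega\times\Gamma}$ is invertible. If $\mrm{Ker}(\mA_{\Omega\times\Gamma})\neq\{0\}$, then $\infsup_{\Vh(\Omega,\Gamma)\to\Vh(\Omega,\Gamma)^*}(\mA_{\Omega\times\Gamma})=0$, and the hypothesis $\mrm{Ker}(\mA)\cap\mrm{Ker}(\mB)=\{0\}$ combined with Theorem~\ref{EquivalenceDimensionResonance} forces $\mrm{dim}\,\mrm{Ker}(\Id+\Pi\mS)=\mrm{dim}\,\mrm{Ker}(\mA_{\Omega\times\Gamma})>0$, so $\infsup_{\mbV_h(\Sigma)^*\to\mbV_h(\Sigma)^*}(\Id+\Pi\mS)=0$ as well and the estimate is vacuous. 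The substantive case is when $\mA_{\Omega\times\Gamma}$ is invertible; then $\mA-i\mB^*\mT\mB$ is invertible by Lemma~\ref{ElementaryPropScattering}, and $\Id+\Pi\mS$ by Theorem~\ref{EquivalenceDimensionResonance}.

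Following the strategy of~\cite[Propositions~8.1 and~8.4]{MR4648528}, the plan is to associate with each $\bq\in\mbV_h(\Sigma)^*\setminus\{0\}$ a test pair $(u_\bq,p_\bq)\in\Vh(\Omega,\Gamma)\setminus\{0\}$ satisfying
\[
\frac{\|\mA_{\Omega\times\Gamma}(u_\bq,p_\bq)\|_{\Vh(\Omega,\Gamma)^*}}{\|(u_\bq,p_\bq)\|_{\Vh(\Omega,\Gamma)}} \;\leq\; \bigl((t_h^+)^2+(2\|\mA\|/t_h^-)^2\bigr)\,\frac{\|(\Id+\Pi\mS)\bq\|_{\mT^{-1}}}{\|\bq\|_{\mT^{-1}}},
\]
and then to pass to infima on both sides. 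The pair is built using the $\pm 1$-eigenspace decomposition of $\Pi$ provided by Lemma~\ref{OrthogonalDecomposition}: one writes $\bq=\bp-i\mT\bv$ with $\bp\coloneq(\bq-\Pi\bq)/2\in\mbX_h(\Sigma)^\circ$ and $\bv\coloneq i\mT^{-1}(\bq+\Pi\bq)/2\in\mbX_h(\Sigma)$, lifts $\bv$ to a suitable $\bu_\bq\in\mbX_h(\Omega)$ with $\mB\bu_\bq=\bv$ (possible by Lemma~\ref{ReformulationTransmission}), and sets $(u_\bq,p_\bq)\coloneq\mathcal{R}^{-1}(\bu_\bq)$.

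A key algebraic identity follows from decomposing $(\bv,\bp)\in\mathscr{H}_h(\Sigma)$ into its $\mathscr{C}_h(\mA)$ and $\mathscr{G}(i\mT)$ parts via Proposition~\ref{CauchySetDecomposition}: since $\Pi$ acts as $-\Id$ on $\mbX_h(\Sigma)^\circ$ and as $+\Id$ on $\mT(\mbX_h(\Sigma))$, a short computation gives
\[
(\Id+\Pi\mS)\bq \;=\; -2i\,\Pi\mT\mB\bw, \qquad \bw\coloneq(\mA-i\mB^*\mT\mB)^{-1}(\mA\bu_\bq-\mB^*\bp),
\]
so that $\|(\Id+\Pi\mS)\bq\|_{\mT^{-1}}=2\|\mB\bw\|_\mT$. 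In parallel, since $\bp\in\mbX_h(\Sigma)^\circ=\mrm{Ker}(\mR^*)$ by~\eqref{eq:polarOfImage}, one obtains $\mA_{\Omega\times\Gamma}(u_\bq,p_\bq)=\mathcal{R}^*\mA\bu_\bq=\mathcal{R}^*(\mA-i\mB^*\mT\mB)\bw$.

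The lower bound $\|(u_\bq,p_\bq)\|_{\Vh(\Omega,\Gamma)}\gtrsim\|\bq\|_{\mT^{-1}}$ comes from a minimum-norm choice of the lifting together with the $\mT^{-1}$-orthogonality of $\bp$ and $i\mT\bv$ from Lemma~\ref{OrthogonalDecomposition} and the extremal singular value $t_h^+$ of $\mB^*$; the upper bound $\|\mathcal{R}^*(\mA-i\mB^*\mT\mB)\bw\|_{\Vh(\Omega,\Gamma)^*}\lesssim\|\mB\bw\|_\mT$ is obtained by splitting $(\mA-i\mB^*\mT\mB)\bw=\mA\bw-i\mB^*\mT\mB\bw$ via the triangle inequality, bounding the second term by $t_h^+\|\mB\bw\|_\mT$ and the first by $\|\mA\|\,\|\bw\|\leq \|\mA\|\,\|\mB\bw\|_\mT/t_h^-$ after shifting $\bu_\bq$ by an element of $\mrm{Ker}(\mB)\subset\mbX_h(\Omega)$ so that $\bw\in\mrm{Ker}(\mB)^\perp$ (a shift that leaves $\ell_{\Omega\times\Gamma}$ unchanged), and finally combining the two contributions through a Cauchy--Schwarz-type argument to produce the sum-of-squares factor $(t_h^+)^2+(2\|\mA\|/t_h^-)^2$. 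The main obstacle I foresee is the careful coordination of constants in the lower bound, namely ensuring that the $\bp$-component of $\bq$ is absorbed into the $p_\bq\in\Qh(\Gamma)$ slot while the $\bv$-component is carried through the boundary trace of $u_\bq$, in such a way that both contributions combine into a single $\|\bq\|_{\mT^{-1}}$ bound with constants compatible with the claimed factor.
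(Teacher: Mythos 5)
Your two-case split is exactly what the paper intends: the published text gives no detailed proof, saying only that the singular case follows from Theorem~\ref{EquivalenceDimensionResonance} and that the regular case parallels \cite[Propositions 8.1 and 8.4]{MR4648528}. Your singular case is correct, and your key identity $(\Id+\Pi\mS)\bq=-2i\Pi\mT\mB\bw$ with $\bw=(\mA-i\mB^*\mT\mB)^{-1}(\mA\bu_\bq-\mB^*\bp)$ does check out (it follows from Proposition~\ref{CauchySetDecomposition} together with $\Pi(\bp+i\mT\bv)=-\bp+i\mT\bv$ for $(\bv,\bp)\in\mathscr{X}_h(\Sigma)$), as does the reduction of the numerator to $\mathcal{R}^*(\mA-i\mB^*\mT\mB)\bw$.

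The gap is in the lower bound $\Vert(u_\bq,p_\bq)\Vert_{\Vh(\Omega,\Gamma)}\gtrsim\Vert\bq\Vert_{\mT^{-1}}$, which cannot hold for the construction as described. Your test pair is built exclusively from a lifting of $\bv=i\mT^{-1}(\bq+\Pi\bq)/2$ and carries no trace of $\bp=(\bq-\Pi\bq)/2$; since $\Vert\bq\Vert_{\mT^{-1}}^2=\Vert\bp\Vert_{\mT^{-1}}^2+\Vert\bv\Vert_{\mT}^2$ by Lemma~\ref{OrthogonalDecomposition}, taking $\bq=\bp\in\mbX_h(\Sigma)^{\circ}\setminus\{0\}$ (so $\bv=0$) gives $(u_\bq,p_\bq)=0$ for the minimum-norm lifting while $\Vert(\Id+\Pi\mS)\bq\Vert_{\mT^{-1}}=2\Vert\mB\bw\Vert_{\mT}>0$: no admissible test element is produced, and the quotient degenerates whenever $\bp$ dominates $\bv$. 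The proposed rescue of ``absorbing $\bp$ into the $p_\bq\in\Qh(\Gamma)$ slot'' does not work: $\bp$ lives in $\mbV_h(\Sigma)^*$ while $\Qh(\Gamma)$ is an unrelated primal space, the $\Qh(\Gamma)$ slot sits inside $\mrm{Ker}(\mB)$ so it cannot influence $\bv$, and that same $\mrm{Ker}(\mB)$ freedom is already spent making $\bw\perp\mrm{Ker}(\mB)$. The $\bp$-information must instead re-enter through the Cauchy-data relation $\mA\,\cdot=\mB^*\bp$ (as in Proposition~\ref{IsomorphismCauchyTraces}), which is what yields the two independent estimates $\Vert\bv\Vert_{\mT}\le t_h^+\Vert\cdot\Vert$ and $\Vert\bp\Vert_{\mT^{-1}}\le(\Vert\mA\Vert/t_h^-)\Vert\cdot\Vert$ whose combination produces the sum-of-squares factor; your sketch never actually derives $(t_h^+)^2+(2\Vert\mA\Vert/t_h^-)^2$, nor accounts for the asymmetric factor $2$ in the second term. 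This is more than a bookkeeping issue you defer: the construction itself has to change before the constants can be coordinated.
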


Recall that both $\Pi$ and $\mS$ are contractive\textsuperscript{\ref{note:contraction}} in the norm $\Vert
\cdot\Vert_{\mT^{-1}}$ (see Lemma~\ref{CaracExchangeOp} and Proposition~\ref{ProposistionContractivity}), so following again the same
argumentation as in~\cite[Corollary 6.2]{MR4648528}, the previous
estimate combined with the contractivity of $\Pi\mS$ implies the
strong coercivity of the skeleton operator $\Id+\Pi\mS$.

\begin{cor}\label{CoercivityProperty}\quad\\
  Consider that $\mrm{Ker}(\mA)\cap\mrm{Ker}(\mB) = \{0\}$ and the imaginary sign assumption~\eqref{Assumption3} holds.
  Then we have the estimate
  \begin{equation*}
    \begin{aligned}
      \inf_{\bp\in \mbV_h(\Sigma)^*\setminus\{0\}}\frac{\Re \{\langle
        (\Id+\Pi\mS)\bp, \mT^{-1}\overline{\bp}\rangle\}}{\Vert
        \bp\Vert_{\mT^{-1}}^{2}}\geq \frac{1}{2}
      \Big(\;\frac{\alpha_h}{ (t_h^+)^2+(2\Vert
        \mA\Vert/t_h^-)^2}\;\Big)^{2} &\\[10pt]
      \text{with}\quad\alpha_h\coloneq \infsup_{\Vh(\Omega,\Gamma)\to
        \Vh(\Omega,\Gamma)^*}(\mA_{\Omega\times\Gamma}). &
    \end{aligned}
  \end{equation*}
\end{cor}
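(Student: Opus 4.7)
The plan is to combine two ingredients: a short parallelogram-type identity specific to ``identity plus contraction'' operators, and the inf-sup estimate provided by the previous theorem. The underlying Hilbert space is $\mbV_h(\Sigma)^*$ equipped with the scalar product induced by $\mT^{-1}$, and the contraction is $\mM \coloneq \Pi\mS$, which is $\mT^{-1}$-contractive because $\Pi$ is a $\mT^{-1}$-isometry by Lemma~\ref{CaracExchangeOp} and $\mS$ is a $\mT^{-1}$-contraction by Proposition~\ref{ProposistionContractivity} (using Assumption~\eqref{Assumption3}).

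First, I would expand the norm $\|(\Id+\mM)\bp\|_{\mT^{-1}}^2$ in the scalar product induced by $\mT^{-1}$ to obtain the identity
\begin{equation*}
  2\,\Re\langle (\Id+\mM)\bp,\mT^{-1}\overline{\bp}\rangle
  = \|\bp\|_{\mT^{-1}}^{2} + \|(\Id+\mM)\bp\|_{\mT^{-1}}^{2} - \|\mM\bp\|_{\mT^{-1}}^{2}.
\end{equation*}
Since $\mM$ is $\mT^{-1}$-contractive, the first and third terms satisfy $\|\bp\|_{\mT^{-1}}^2 - \|\mM\bp\|_{\mT^{-1}}^2 \ge 0$, giving the key inequality
\begin{equation*}
  \Re\langle (\Id+\Pi\mS)\bp,\mT^{-1}\overline{\bp}\rangle \;\ge\; \tfrac{1}{2}\,\|(\Id+\Pi\mS)\bp\|_{\mT^{-1}}^{2}.
\end{equation*}

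Second, I would invoke the preceding theorem, which (under the assumption $\mrm{Ker}(\mA)\cap\mrm{Ker}(\mB)=\{0\}$) yields
\begin{equation*}
  \infsup_{\mbV_h(\Sigma)^*\to \mbV_h(\Sigma)^*}(\Id+\Pi\mS) \;\ge\; \frac{\alpha_h}{(t_h^+)^2+(2\Vert\mA\Vert/t_h^-)^2},
\end{equation*}
that is, $\|(\Id+\Pi\mS)\bp\|_{\mT^{-1}}$ is bounded below by this same quotient times $\|\bp\|_{\mT^{-1}}$ for every $\bp$. Squaring this lower bound and inserting it into the inequality from the first step delivers exactly the claimed coercivity estimate.

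There is no real obstacle here: the whole content is the parallelogram identity for a contraction, which is essentially the reference~\cite[Corollary 6.2]{MR4648528} mentioned just before the statement, combined mechanically with the preceding theorem. The only point requiring a minimum of care is to make sure the expansion is written in the sesquilinear convention of the paper (so that $\Re$ enters correctly when passing from the duality pairing $\langle \cdot,\mT^{-1}\overline{\bp}\rangle$ to the $\mT^{-1}$-scalar product) and to check that the inf-sup constant quoted from the theorem is indeed measured in the $\Vert\cdot\Vert_{\mT^{-1}}$ norm, so that it matches the norm in which $\Pi\mS$ is contractive.
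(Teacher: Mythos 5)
Your proposal is correct and follows essentially the same route as the paper, which does not write out the argument but points to the combination of the preceding inf-sup theorem with the contractivity of $\Pi\mS$ as in~\cite[Corollary 6.2]{MR4648528}; your parallelogram identity and the squared lower bound on $\Vert(\Id+\Pi\mS)\bp\Vert_{\mT^{-1}}$ are exactly that argument made explicit. The one point you flag — that the inf-sup constant of $\Id+\Pi\mS$ must be read in the $\Vert\cdot\Vert_{\mT^{-1}}$ norm on $\mbV_h(\Sigma)^*$ — is indeed the convention intended there, so no gap remains.
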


\begin{rem}\label{rem:hrobustness}\quad\\
  Assume that~\eqref{IntialBVP2} admits a unique solution and no
  physical resonance phenomenon is occurring. The impedance operator
  $\mT$ can be chosen appropriately so as to guarantee that the
  constants $t_h^{\pm}$ are $h$-uniformly bounded from below and above,
  see e.g.~\cite{MR4433119}. With such a choice of impedance, since
  the discretization can reasonably be assumed stable,
  i.e. $\liminf_{h\to 0}\alpha_h>0$, the inequality in Corollary~\ref{CoercivityProperty} provides $h$-uniform coercivity
  estimate. Such an estimate guarantees $h$-robust convergence of classical
  linear iterative solvers such as GMRes.
  
  On the other hand, the present analysis covers FEM-BEM
  coupling schemes such as~\eqref{CostabelCoupling} for which we are
  able to prove a convergence bound, see Theorem~\ref{thm:richardson_upper_bound}. To our knowledge, this yields the
  first $h$-robust iterative algorithm for FEM-BEM coupling with
  guaranteed convergence bounds.
\end{rem}

\section{Extension to more general problems}\label{sec:general_problems}

The theory developed so far can be generalized to boundary value problems that combine both Problems~\eqref{IntialBVPa} and~\eqref{IntialBVPb}. In addition to $\Omega$, let $\Omega_{\text{O}} \subset \RR^d$ be a polyhedral open set (not necessarily bounded nor connected) with bounded boundary and $\Omega_{\text{O}} \cap \Omega = \emptyset$. The wavenumber is constant in the BEM subdomain $\Omega_{\text{B}} = \RR^d \setminus (\overline{\Omega \cup \Omega_{\text{O}}})$ and can vary arbitrarily in the FEM subdomain $\Omega$, while $\Omega_{\text{O}}$ represents for instance an impenetrable obstacle (or a cavity) with a sound-soft or sound-hard boundary. Note that this configuration can include cross-points, that is, points where the three domains $\Omega$, $\Omega_{\text{O}}$ and $\Omega_{\text{B}}$ are adjacent, like in Figure~\ref{fig:cross_points-config}. Acoustic wave propagation is then modelled by the following general boundary value problem: 
\begin{equation}\label{eq:generalBVP}
  \begin{aligned}
    &u\in \mH^{1}_{\text{loc}}(\RR^d\setminus \overline{\Omega_{\text{O}}})\;\;\text{and}\;\; -\Delta u-\kappa^{2}u = f \text{ in }\RR^d\setminus \overline{\Omega_{\text{O}}}\\
    &+\;\text{boundary condition on } \partial\Omega_{\text{O}}\\
    &+\;\text{Sommerfeld's radiation condition if } \Omega_{\text{B}} \ne \emptyset.
  \end{aligned}
\end{equation}
Note that Equation~\eqref{eq:generalBVP} also covers both the configurations considered previously: it reduces to Problem~\eqref{IntialBVPa} when $\Omega_{\text{O}} = \RR^d \setminus \overline{\Omega}$, while it becomes Problem~\eqref{IntialBVPb} when $\Omega_{\text{O}} = \emptyset$. 

We wish to give a generalization of variational formulation~\eqref{IntialBVP2}, which we remind is the starting point of our theory. 
Let ${\Gamma}_{\text{B}} = \partial\Omega_{\text{B}}$ and${\Gamma}_{\text{O}} = \partial\Omega_{\text{O}}$, and define the space
\begin{equation*}
\begin{split}
  \mathscr{X}(\Omega, {\Gamma}_{\text{B}}, {\Gamma}_{\text{O}}) \coloneq 
  \{ & ( v, (\psi_{\text{B}}, q_{\text{B}}), (\psi_{\text{O}}, q_{\text{O}}) ) \in \\
  & \mH^{1}(\Omega) \times (\mH^{1/2}(\Gamma_{\text{B}})\times \mH^{-1/2}(\Gamma_{\text{B}})) \times (\mH^{1/2}(\Gamma_{\text{O}})\times \mH^{-1/2}(\Gamma_{\text{O}})) \mid \\
  &\exists\, \tilde{v} \in H^1_{\text{loc}}(\RR^d\setminus \overline{\Omega_{\text{O}}}) \text{ such that } \tilde{v} |_{\Omega}=v, \,\tilde{v} |_{\Gamma_{\text{B}}} = \psi_{\text{B}}, \,\tilde{v} |_{\Gamma_{\text{O}}, c} = \psi_{\text{O}} 
  \}.
\end{split}
\end{equation*}
As an analogue to~\eqref{IntialBVP2}, it can be shown (see~\cite{noteCRAS}) that Problem~\eqref{eq:generalBVP} can be reformulated as: 
\begin{equation}\label{eq:varfgeneralBVP}
  \begin{aligned}
    & \text{Find}\; ( u, (\phi_{\text{B}}, p_{\text{B}}), (\phi_{\text{O}}, p_{\text{O}}) ) \in \mathscr{X}(\Omega, {\Gamma}_{\text{B}}, {\Gamma}_{\text{O}}) \; \text{such that}\\
    & \langle \mathcal{A}_{\Omega}(u),v \rangle 
    + \langle \mathcal{A}_{\Gamma_{\text{B}}}(\phi_{\text{B}},p_{\text{B}}), (\psi_{\text{B}},q_{\text{B}}) \rangle
    + \langle \mathcal{A}_{\Gamma_{\text{O}}}(\phi_{\text{O}},p_{\text{O}}), (\psi_{\text{O}},q_{\text{O}}) \rangle \\
    & = \langle \ell_{\Omega}, v\rangle 
      + \langle \ell_{\Gamma_{\text{O}}}, (\psi_{\text{O}},q_{\text{O}}) \rangle, 
    \quad \forall \, ( v, (\psi_{\text{B}}, q_{\text{B}}), (\psi_{\text{O}}, q_{\text{O}}) ) \in \mathscr{X}(\Omega, {\Gamma}_{\text{B}}, {\Gamma}_{\text{O}}), 
  \end{aligned}
\end{equation}
where $\mathcal{A}_{\Gamma_{\text{B}}}$ represents a FEM-BEM coupling (see~\eqref{CostabelCoupling},~\eqref{JohnsonNedelecCoupling} or~\eqref{BielakMacCamyCoupling}), $\mathcal{A}_{\Omega}, \ell_{\Omega}$ are defined in~\eqref{HelmholtzOperator}, and $\mathcal{A}_{\Gamma_{\text{O}}}, \ell_{\Gamma_{\text{O}}}$ represent a boundary condition (see~\eqref{DirichletBC} or~\eqref{NeumannBC}, together with~\cite[Examples 3.1--3.4]{MR4665035}). 
Then, the whole theory for the domain decomposition method can be seamlessly extended by replacing, in the definition of \(\Sigma\), in the functional spaces and in the operators defined in the previous sections, the component on $\Gamma$ with two components on ${\Gamma}_{\text{B}}$ and ${\Gamma}_{\text{O}}$. 

\begin{rem}[Strongly and weakly imposed boundary conditions]\label{rem:weakstrongBC}\quad\\
When the boundary condition on \({\Gamma}_{\text{O}}\) is enforced thanks to an operator \(\mathcal{A}_{\Gamma_{\text{O}}}\) as above, we say that the boundary condition is \emph{weakly} imposed.
Alternatively, the boundary condition on \(\Gamma_{\text{O}}\) can be \emph{strongly} imposed as in classical variational formulations, by modifying the expressions of \(\mathcal{A}_{\Omega}\), \(\ell_{\Omega}\) in \eqref{HelmholtzOperator} and possibly the variational space \(\mH^{1}(\Omega)\). The GOSM theory holds also in the latter case. We refer to~\cite{MR4433119} for more details, in which the theory is established for Helmholtz equation in a bounded domain with strongly imposed impedance boundary condition.
\end{rem}

\section{Numerical experiments}\label{sec:numerical_experiments}

As pointed out in the last paragraph of
Section~\ref{sec:problem_reformulation}, if Assumption~\eqref{Assumption2}
is satisfied, the operator \(\Id+\Pi\mS\) from the substructured
problem~\eqref{IntialBVP6} takes the form ``identity + contraction''. We
choose to apply a Richardson iterative scheme (see
e.g.~\cite{Hackbusch2016ISL}), for which geometric convergence is guaranteed,
see Theorem~\ref{thm:richardson_upper_bound}.

In the numerical tests, Problem~\eqref{IntialBVP2}, and hence its discretization~\eqref{IntialBVP3}, are assumed well-posed, i.e., the wavenumber \(\kappa\) is chosen such that the solution is unique. Thus, as discussed at the end of Section~\ref{sec:resonances}, \(\mA-i\mB^*\mT\mB\) is invertible and problem~\eqref{IntialBVP6} is well-posed. Moreover, \(\mA\) and \(\mB\) being subdomain-wise block-diagonal matrices, \(\mT\) is also chosen to be subdomain-wise block-diagonal, \textit{i.e.}\ \(\mT:= \diag(\mT_{\Gamma},\mT_{\Omega_1},\dots,\mT_{\Omega_J})\). Note that if each block of \(\mT\) is real and defines a scalar product on the local trace space, then~\eqref{ChoiceImpedance} is satisfied.

We give more details on how to solve efficiently~\eqref{IntialBVP6} in Section~\ref{subsec:main_algo}, we discuss the choice of transmission operators \(\mT_j\) in \cref{sec:transmission_operator}, and present applications for various geometries and material configurations in \cref{ssec:fem-bem-strong} and \cref{ssec:fem-bem-weak}.

\subsection{Richardson solver and geometric convergence}\label{subsec:main_algo}

\paragraph{Notation} We emphasize that bold quantities, such as \(\bq\) in Problem~\eqref{IntialBVP6}, correspond to vectors of vectors, where each subvector contains the degrees of freedom (dofs) associated with the discretization of the corresponding subdomain. We denote the subvector associated with subdomains \(\Omega_j\) with a non-bold font and subscript \(j=1,\dots,J\), while \(j=0\) refers to \(\Gamma\). 

\paragraph{Considered unknowns} For implementation simplicity reasons\footnote{It is easier to consider discrete unknowns in \(\mbV_h(\Sigma)\), rather than in its dual \(\mbV_h(\Sigma)^*\).}, we solve~\eqref{IntialBVP6} after performing the change of variable \( \tilde{\bq} = \mT^{-1} \bq\). Hence, we solve a new problem with \(\tilde{\Pi} \coloneqq \mT^{-1} \Pi \mT\), \(\tilde{\mS} \coloneqq \mT^{-1} \mS \mT\) and \(\tilde{\bb} \coloneqq -2i\tilde{\Pi}\mB(\mA-i\mB^*\mT\mB)^{\dagger}\bell\), instead of \(\Pi\), \(\mS\) and \(\bb \coloneqq -2i\Pi \mT \mB(\mA-i\mB^*\mT\mB)^{\dagger}\bell\) respectively.
Note that \(\tilde{\Pi}\) and \(\tilde{\mS}\) have the same properties as \(\Pi\) and \(\mS\). In particular, \(\Id + \tilde{\Pi} \tilde{\mS}\) takes the form ``identity + contraction'', with respect to \(\Vert \cdot \Vert_{\mT}\).
Moreover, recalling the formulas \(\Pi = 2\mT\mR (\mR^*\mT\mR)^{-1}\mR^*-\Id\) and \(\mS = \Id
  +2i\mT\mB(\mA-i\mB^*\mT\mB)^{\dagger}\mB^*\), we see that, when computing a matrix-vector product with \(\tilde{\Pi}\) (resp.~\(\tilde{\mS}\)), the same numbers of products as with \(\Pi\) (resp.~\(\mS\)) are done. 
It is also of interest, even though the right-hand side is computed only once, that one less matrix-vector product is needed to compute \(\tilde{\bb}\) than to compute \(\bb\). Finally, remark that \(\tilde{\Pi} = \Pi^*\), but \(\tilde{\mS}\) might be different from \(\mS^*\) since we do not assume that \(\mA = \mA^*\) (which does not hold, for instance, with \(\mA_{\Gamma}\) defined as in~\eqref{JohnsonNedelecCoupling} or~\eqref{BielakMacCamyCoupling}).

\paragraph{Iterative scheme} To solve the modified equation resulting from \eqref{IntialBVP6}, that is, \((\Id+\tilde{\Pi}\tilde{\mS})\tilde{\bq} = \tilde{\bb}\), we use the Richardson iteration with relaxation parameter \(\beta \in (0,1)\): 
\begin{equation}\label{eq:richardson_iterate}
  \begin{aligned}
    \tilde{\bq}^{n+1} 
    &= \tilde{\bq}^n + \beta \left(\tilde{\bb} - (\Id + \tilde{\Pi} \tilde{\mS}) \tilde{\bq}^n \right)\\
    &= (1-\beta) \tilde{\bq}^n - \beta \, \tilde{\Pi} \tilde{\mS} \tilde{\bq}^n + \beta \, \tilde{\bb}. 
  \end{aligned}
\end{equation}
The detailed procedure for the resulting Generalized Optimized Schwarz Method (GOSM) is presented in Algorithm~\ref{alg:iterative_procedure}. 
The core of the Richardson iteration is at lines~\ref{line:core1} and~\ref{line:core2}. We also introduce two numerical parameters: \(N=30000\) the maximum number of iterations allowed, and \(\varepsilon = 10^{-6}\) the tolerance threshold for the relative residual
\(
  \lVert \tilde{\bb} - (\Id + \tilde{\Pi} \tilde{\mS}) \tilde{\bq}^n \rVert_2 / \lVert \tilde{\bb} - (\Id + \tilde{\Pi} \tilde{\mS}) \tilde{\bq}^0 \rVert_2
\), under which we consider that the algorithm has converged.

\begin{algorithm}[H]
  \caption{GOSM with Richardson iteration}\label{alg:iterative_procedure}
  \begin{algorithmic}[1]
    \State Choose an initial \(\tilde{\bq}\), \(n=0\)
    \For{j = \(0\) to \(J\)}
        \State Compute and store \(\left(\mA_j -i \mB_j^* \mT_j \mB_j\right)^{-1}\) \Comment{LU factorization}\label{line:LU}
        \State \(s_j = \tilde{q}_j + 2 i \mB_j \left(\mA_j -i \mB_j^* \mT_j \mB_j\right)^{-1} \mB_j^* \mT_j \tilde{q}_j\) \Comment{Local scattering}
    \EndFor
    \State \(\bg = \tilde{\Pi} \bs\) \Comment{Global exchange}\label{line:exchangeOp1}
    \For{j = \(0\) to \(J\)}
        \State Compute the residual \(r_j = \tilde{b}_j - (\tilde{q}_j + g_j)\)
        \State \(r_j^0 = r_j\)
    \EndFor
    \While{ \(\lVert \br \rVert_2 > \varepsilon \, \lVert \br^0 \rVert_2\) and \( n < N\)}  \Comment{Stopping criterion with \(2\)-norm}\label{line:stopping}
      \For{j = \(0\) to \(J\)}
        \State \(\tilde{q}_j = \tilde{q}_j + \beta \, r_j\)\label{line:core1}
        \State \(s_j = \tilde{q}_j + 2 i \mB_j \left(\mA_j -i \mB_j^* \mT_j \mB_j\right)^{-1} \mB_j^* \mT_j \tilde{q}_j\) \Comment{Local scattering}
      \EndFor
      \State \(\bg = \tilde{\Pi} \bs\) \Comment{Global exchange}\label{line:exchangeOp2} 

      \For{j = \(0\) to \(J\)}
        \State \(r_j = \tilde{b}_j - (\tilde{q}_j + g_j)\)\label{line:core2}
      \EndFor
      \State \(n=n+1\)
    \EndWhile
    \State \Return \(\tilde{\bq}\)
  \end{algorithmic}
\end{algorithm}

\paragraph{Richardson iteration properties}
The Richardson iteration~\eqref{eq:richardson_iterate} converges if and only if \(\rho((1-\beta)\Id - \beta \, \tilde{\Pi} \tilde{\mS}) < 1\), where \(\rho(\cdot)\) denotes the spectral radius (see e.g.~\cite[Section~3.5]{Hackbusch2016ISL}). The spectral radius being independent of a specific norm, the stopping criterion in line~\ref{line:stopping} of Algorithm~\ref{alg:iterative_procedure} is computed for the discrete \(2\)-norm. 
For our numerical experiments we choose \(\beta=1/2\). Since \(\tilde{\Pi} \tilde{\mS}\) is \(\mT\)-contractive, we derive from~\cite[Corollary~3.33]{Hackbusch2016ISL} \(\rho((1-\beta)\Id - \beta \, \tilde{\Pi} \tilde{\mS}) \leq \tau_h\), with \(\tau_h \coloneqq \sqrt{1- \gamma_h^2/4}\) and
\(
  \gamma_h \coloneqq \inf_{\tilde{\bq} \in \mbV_h(\Sigma)^*\setminus\{0\}} \lVert (\Id + \tilde{\Pi} \tilde{\mS}) \tilde{\bq} \rVert_{\mT}/ \lVert \tilde{\bq} \rVert_{\mT}
\). 
We point out that depending on which impedance operator \(\mT\) is chosen, \(\tau_h \) may, or may not, be independent of \(h\) (see also Remark~\ref{rem:hrobustness} and Section~\ref{sec:transmission_operator}). In particular, a \(h\)-independent upper bound for the convergence rate has been proved in~\cite{MR4433119}, when \(\mT\) is block diagonal and each of its block \(\mT_j\) is \emph{\(h\)-uniformly stable} in the following sense:

\begin{equation}\label{Assumption4}
  \begin{aligned}
    & \textbf{Assumption:} \qquad
    t_*^- \coloneqq \liminf_{h \to 0} t_h^- > 0,\qquad t_*^+ \coloneqq \limsup_{h \to 0} t_h^+ < +\infty.
  \end{aligned}
\end{equation}

\begin{thm}[{\cite[Corollary~10.7]{MR4433119}}]\label{thm:richardson_upper_bound}\quad\\
   Assume~\eqref{Assumption2},~\eqref{Assumption4} and \(\alpha_h > 0\).
  Let \(\tilde{\bfq}^{\infty} \in \mbV_h(\Sigma)\) refers to the unique solution of modified Equation~\eqref{IntialBVP6}.
  Then, \(\liminf_{h \to 0} \gamma_h > 0\), and for any \(0 < \gamma_* < \liminf_{h \to 0} \gamma_h\), there exists \(h_* > 0\) such that the iterate \(\tilde{\bfq}^n\) computed by means of~\eqref{eq:richardson_iterate} satisfies, with \(\tau_* \coloneqq 1 - \gamma_*^2/4\), the estimate
  \begin{align*}
    \dfrac{\lVert \tilde{\bfq}^n - \tilde{\bfq}^{\infty} \rVert}{\lVert \tilde{\bfq}^0 - \tilde{\bfq}^{\infty} \rVert} \leq \tau_*^{n/2} \quad \forall h \in (0, h_*), \forall n \geq 0.
  \end{align*}
\end{thm}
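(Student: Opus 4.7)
The plan is to read off the convergence rate of the Richardson iteration as the operator norm of an ``identity minus contraction'' map in the $\mT$-norm, and then to uniformise this rate in $h$ by means of the coercivity estimate of Corollary~\ref{CoercivityProperty}. First, I would rewrite~\eqref{eq:richardson_iterate} in error form: setting $e^n \coloneq \tilde{\bfq}^n - \tilde{\bfq}^{\infty}$ and using that $\tilde{\bfq}^{\infty}$ is a fixed point of the iteration, the choice $\beta = 1/2$ gives $e^{n+1} = \tfrac{1}{2}(\Id - \tilde{\Pi}\tilde{\mS})\,e^n$. The claim then reduces to showing that $\|\tfrac{1}{2}(\Id - \tilde{\Pi}\tilde{\mS})\|_{\mT\to\mT}^{2} \le 1 - \gamma_h^{2}/4$.

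The key algebraic input is the elementary identity, valid for any bounded operator $K$ on a Hilbert space $(\mH,\|\cdot\|)$,
\begin{equation*}
  \bigl\|\tfrac{1}{2}(\Id-K)v\bigr\|^{2} + \bigl\|\tfrac{1}{2}(\Id+K)v\bigr\|^{2} = \tfrac{1}{2}\bigl(\|v\|^{2} + \|Kv\|^{2}\bigr).
\end{equation*}
A short computation using $\tilde{\Pi} = \mT^{-1}\Pi\mT$, $\tilde{\mS} = \mT^{-1}\mS\mT$ together with Lemma~\ref{CaracExchangeOp} and Proposition~\ref{ProposistionContractivity} shows that $\tilde{\Pi}\tilde{\mS}$ is a $\mT$-contraction. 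Applying the identity with $K = \tilde{\Pi}\tilde{\mS}$ in the $\mT$-norm, dropping the second term on the right and using the definition of $\gamma_h$, I obtain $\|e^{n+1}\|_{\mT}^{2} \le (1 - \gamma_h^{2}/4)\|e^{n}\|_{\mT}^{2}$. Iterating gives the per-step factor $\sqrt{1 - \gamma_h^{2}/4}$ and hence the desired bound in the $\mT$-norm after $n$ iterations.

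The main obstacle is establishing $\liminf_{h \to 0}\gamma_h > 0$. The change of variable $\bp = \mT\tilde{\bq}$ turns the $\gamma_h$ estimate into an infsup estimate for $\Id + \Pi\mS$ on $(\mbV_h(\Sigma)^{*},\|\cdot\|_{\mT^{-1}})$, namely $\gamma_h = \infsup_{\mbV_h(\Sigma)^{*} \to \mbV_h(\Sigma)^{*}}(\Id + \Pi\mS)$. Applying Cauchy--Schwarz to the coercivity estimate of Corollary~\ref{CoercivityProperty} then yields
\begin{equation*}
  \gamma_h \;\ge\; \frac{1}{2}\Bigl(\frac{\alpha_h}{(t_h^{+})^{2} + (2\|\mA\|/t_h^{-})^{2}}\Bigr)^{\!2}.
\end{equation*}
Under Assumption~\eqref{Assumption4}, $t_h^{\pm}$ are bounded away from $0$ and $+\infty$ uniformly in $h$; under the stability hypothesis $\liminf_h \alpha_h > 0$, and using that $\|\mA\|$ is uniformly bounded (standard since $\kappa \in L^{\infty}$ and the local stiffness/mass matrices have $h$-uniformly bounded norms on the chosen FE space), the right-hand side is bounded below by a positive constant independent of $h$. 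This gives $\liminf_h \gamma_h > 0$.

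To conclude, given any $0 < \gamma_* < \liminf_h \gamma_h$, by definition of $\liminf$ there exists $h_* > 0$ such that $\gamma_h \ge \gamma_*$ for every $h \in (0,h_*)$. Then the per-iteration contraction factor in the $\mT$-norm is at most $\sqrt{1 - \gamma_*^{2}/4} = \tau_*^{1/2}$, and composing $n$ iterations yields the announced bound $\tau_*^{n/2}$. I expect the algebraic identity and the error recursion to be routine; the real work lies in tracking the chain of $h$-uniform estimates that feeds Corollary~\ref{CoercivityProperty}, which is exactly where Assumption~\eqref{Assumption4} (and the standing stability hypothesis $\alpha_h \ge \alpha_* > 0$) enters decisively.
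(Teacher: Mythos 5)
Your proposal is correct and follows essentially the same route as the paper, which does not reprove this result but cites~\cite[Corollary~10.7]{MR4433119} and sketches exactly the ingredients you use: the $\mT$-contractivity of $\tilde{\Pi}\tilde{\mS}$ (from Lemma~\ref{CaracExchangeOp} and Proposition~\ref{ProposistionContractivity}), the resulting per-step bound $\sqrt{1-\gamma_h^2/4}$ for the relaxed Richardson map, and the $h$-uniform lower bound on $\gamma_h$ obtained from Corollary~\ref{CoercivityProperty} together with Assumption~\eqref{Assumption4}. You are also right to flag that the argument really consumes $\liminf_{h\to 0}\alpha_h>0$ rather than the pointwise condition $\alpha_h>0$ appearing in the statement, which is consistent with the stability hypothesis discussed in Remark~\ref{rem:hrobustness}.
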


\paragraph{Global exchange}

The coupling between subdomains is performed by applying the exchange operator \(\tilde{\Pi}\), at lines~\ref{line:exchangeOp1} and~\ref{line:exchangeOp2} in Algorithm~\ref{alg:iterative_procedure}, that is, at initialization and then at each iteration. This process is described in detail in Algorithm~\ref{alg:exchange}, where \((\mR_j)_{0\le j \le J}\) denote the block components of the restriction operator \(\mR\) defined in~\eqref{DomainRestrictionOperators}. 
Its core is at line~\ref{line:coreExchange}, where subdomains share data between each other. This operation is the only one that might be global~---~see the last paragraph of Section~\ref{sec:ddm}. We highlight that the involved matrix \(\mR^* \mT \mR\) is symmetric and positive definite, see~\eqref{ChoiceImpedance}. Then, line~\ref{line:coreExchange} can be performed with the Conjugate Gradient (CG) method~---~we plan to apply, in a future work, efficient preconditioners available in the literature.
All the other operations of Algorithm~\ref{alg:iterative_procedure} are local to a given subdomain \(j\), so the GOSM can be implemented in parallel. 

\begin{algorithm}[H]
  \caption{Apply \(\tilde{\Pi}\) to a vector \(\bs\)}\label{alg:exchange}
  \begin{algorithmic}[1]
    \For{j = \(0\) to \(J\)}
      \State \(c_j = \mR_j^* \mT_j s_j \) \Comment{Local products}
    \EndFor
    \State \(\bu = \left(\mR^* \mT \mR\right)^{-1} \bc\) \Comment{Global product: Solve with CG}\label{line:coreExchange}
    \For{j = \(0\) to \(J\)}
      \State \(g_j = 2 \mR_j u_j - s_j\) \Comment{Local products}
    \EndFor
    \State \Return \(\bg\)
  \end{algorithmic}
\end{algorithm}

\paragraph{Discretization and software}
Meshes have been generated using \texttt{Gmsh}~\cite{GeuzaineRemacle2009G3F}, version \(4.13.1\). We have implemented the GOSM in a (sequential) library written in C++. 
We use $\mathbb{P}_1$-Lagrange finite and boundary elements. Finite element (resp.~boundary element) matrices are stored in sparse (resp.~dense) format. 
To compute the LU factorization of the blocks \(\mA_j -i \mB_j^* \mT_j \mB_j\), we use \texttt{UMFPACK}~\cite{Davis2004A8U} (resp.~\texttt{LAPACK}~\cite{Anderson1999LUG}) for sparse (resp.~dense) matrices. In future work, we plan to parallelize our GOSM code and to use a hierarchical format for the boundary element matrices, in order to solve large-scale problems.

\subsection{Choosing transmission operators}\label{sec:transmission_operator}

We now describe the three types of transmission operators used in our
numerical tests for a block of \(\mT\) associated with a subdomain \(\omega
\subset \RR^d\). The transmission conditions are applied on
\(\Gamma_{\omega} \coloneqq \partial \omega\).

\paragraph{Transmission operator 1: local Despr\'es operator}
The first one is the \emph{local} Després operator \(\mT_{D,\Gamma_{\omega}}\), given by
\(\langle\mT_{D,\Gamma_{\omega}} (u), v \rangle_{\Gamma_{\omega}} = \kappa \int_{\Gamma_{\omega}} u v \diff s\), of classical impedance
transmission conditions, so called because it was used
in~\cite{Despres1991DDM} for the first optimized Schwarz method for the
Helmholtz equation. It defines a scalar product on the local trace space,
and thus can be used to define an operator \(\mT\)
satisfying~\eqref{ChoiceImpedance} on \(\Sigma\). But it does not satisfy
Assumption~\eqref{Assumption4}, i.e., it is \emph{not} \(h\)-uniformly
stable (see for instance~\cite[Section~5]{MR4433119}).

\paragraph{Transmission operator 2: Yukawa hypersingular operator}
The second one, already used for instance
in~\cite[§3.1.2]{CollinoJolyEtAl2020ECN}, is \(\mT_{Y,\Gamma_{\omega}}
\coloneqq \mathcal{W}_{i \kappa, \Gamma_{\omega}}\), the hypersingular
operator, defined in~\eqref{BIOP}, this time taking the Green kernel of a
``positive Helmholtz'' operator, also known as the \emph{Yukawa operator},
that is, \(-\Delta + \kappa^2\,\Id\).  By definition
\(\mT_{Y,\Gamma_{\omega}}\) is \emph{non-local}, can be used to define a
transmission operator~\eqref{ChoiceImpedance} and satisfies
Assumption~\eqref{Assumption4}, see
again~\cite[Section~5]{MR4433119}. Since an explicit expression of the
Green kernel, available only for constant $\kappa$, is needed to implement
such operators, in practice \(\mT_{Y,\Gamma_{\omega}}\) can only be defined
for homogeneous subdomains.
\begin{figure}[!h]
  \centering
  \includegraphics[scale=1]{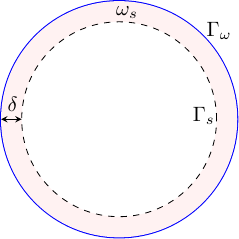} 
  \hspace*{1.0cm}
  \includegraphics[scale=1]{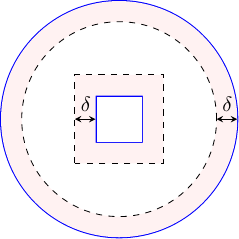} 
  \caption{Two examples of the layer \(\omega_s \subset \omega\) to compute the Schur complement based transmission operator \(\mT_{S}\) for a subdomain $\omega$. The layer $\omega_s$ is the shaded red region, and its boundary is the union of \(\Gamma_{\omega} \coloneq \partial \omega\) (solid blue line) and of  \(\Gamma_s \coloneqq \partial \omega_s \setminus \Gamma_{\omega}\) (dashed black line)}\label{fig:dtn_configuration}
\end{figure}
\paragraph{Transmission operator 3: Schur complement}
The last transmission operator \(\mT_{S,\omega}\) is a Schur complement
based operator, relying on the discretization of a positive
Dirichlet-to-Neumann (\(\DtN\)) map for the Yukawa operator. This
\emph{non-local} operator has been first introduced
in~\cite[Chapter~8]{Parolin2020NOD}, and then used
in~\cite{CollinoJolyEtAl2022NLI},~\cite[Section~4.2]{ClaeysCollinoEtAl2022NOS}
and~\cite[Section~6]{AtchekzaiClaeys2024ANL}. To derive it we consider
\(\omega_s \subset \omega\), a thin layer along \(\Gamma_{\omega}\), as
shown in Figure~\ref{fig:dtn_configuration}. The boundary of \(\omega_s\)
is the union of two disjoint parts, namely \(\Gamma_{\omega}\) and
\(\Gamma_s \coloneqq \partial \omega_s \setminus \Gamma_{\omega}\).  As
in~\cite[Section~6]{AtchekzaiClaeys2024ANL}, the map \(\mT_{S,\omega}:
\Vh(\Gamma_{\omega}) \rightarrow \Vh(\Gamma_{\omega})^* \) is then defined
as the unique Hermitian positive definite linear map satisfying the
minimization property
\begin{align*}
  \langle \mT_{S,\omega}(v), \overline{v} \rangle \coloneqq \min \lbrace
  \lVert \nabla v_s \rVert_{\mL^2(\omega_s)}^2 + \kappa^2 \lVert v_s
  \rVert_{\mL^2(\omega_s)}^2 + \kappa \lVert v_s \rVert_{\mL^2(\Gamma_s)}^2
  \mid v_s \in \mbV_h(\omega_s), v_s|_{\Gamma_{\omega}}=v\rbrace.
\end{align*}
The third term in the sum above expresses the choice of imposing
homogeneous Robin boundary conditions on \(\Gamma_s\), as
in~\cite[Section~8.3]{Parolin2020NOD}. The thickness \(\delta\) of the
layer \(\omega_s\) is a numerical parameter, which is chosen
\(h\)-independent in order for the operator to satisfy
Assumption~\eqref{Assumption4}
(see~\cite[Section~8.3.3]{Parolin2020NOD}). From now on we shall take
\(\delta = \lambda/10\) as a rule of thumb, i.e.~ten percent of the
wavelength.

In practice, to evaluate the action $\mT_{S,\omega}$, we work 
with the following characterization,
suitable for matrix-vector products: for \(g_D \in \Vh(\Gamma_{\omega})\)
\begin{equation}\label{eq:schur_def}
\begin{aligned}
  & \mT_{S,\omega}(g_D) = q  \; \text{ with } (v, q) \in \Vh(\omega_s) \times \Vh(\Gamma_{\omega})^* \text{ such that }\\
  & \begin{bmatrix}
    \mA_{\omega_s} & - (\mB^{'})^*\\
    \mB^{'} & 0
  \end{bmatrix}
  \begin{pmatrix}
    v \\
    q
  \end{pmatrix}
  = 
  \begin{pmatrix}
    0 \\
    g_D
  \end{pmatrix},
\end{aligned}
\end{equation}
where \(\mA_{\omega_s}\) is the matrix arising from the discretization of
the Yukawa operator in \(\omega_s\), and \(\mB^{'}\) is the restriction
matrix from \(\omega_s\) to \(\Gamma_{\omega}\). 

\begin{rem}[Focus on LU factorization when \(\mT_{S,\omega}\) is involved]\label{rq:lu_schur}\quad\\
  When \(\mT_{S,\omega}\) is chosen for the \(j\)-th subdomain, we may need to solve a local problem written as
  \begin{align*}
    (\mA_{j} -\imath\mB_{j}^* \mT_{S,\omega} \mB_{j})u_{j} = \mB_{j}^{*} \mT_{S,\omega} \tilde{\alpha}_j + l_{j}
  \end{align*}
  that arises from (modified) Equations~\eqref{Contractivity}
  or~\eqref{IntialBVP5}. Since \cref{eq:schur_def} is more suited to
  compute matrix-vector products than for deriving a matrix expression, we
  use it to build, as in~\cite[§4.2]{ClaeysCollinoEtAl2022NOS}, the
  enriched problem of the above equation:
  \begin{align*}
    \begin{bmatrix}
      \mA_{j} & 0 & \mB_{j}^* \\
      0 & -\imath \mA_{\omega_s} & - (\mB^{'})^* \\
      \mB_{j} & - \mB^{'} & 0
    \end{bmatrix}
    \begin{pmatrix}
      u_{j} \\
      v \\
      p
    \end{pmatrix}
    =
    \begin{pmatrix}
      l_{j} \\
      0 \\
      \imath \tilde{\alpha}_j
    \end{pmatrix},
  \end{align*}
  where \(p \coloneqq - \mT_{S,\omega} (\imath \mB_{j} u_{j} + \tilde{\alpha}_j)\) is an auxiliary variable.
  It is on this block matrix that is applied the LU factorization of line~\ref{line:LU} of Algorithm~\ref{alg:iterative_procedure}.
\end{rem}

\subsection{FEM-BEM coupling}\label{ssec:fem-bem-strong}
From now on, we only consider Helmholtz problems in dimension \(d=2\). In this section, the geometry of the numerical experiments consists in a bounded domain \(\Omega\) and an unbounded homogeneous domain \(\Omega_{\text{B}}\) whose bounded boundary, included in \(\partial\Omega\), is the surface denoted \(\Gamma\), see Figures~\ref{fig:homogen} and \ref{fig:heterogeneous_lens}. Note that, for this specific geometry with \(J=1\), \(\Gamma\) is also the skeleton \(\Sigma\) of the partition, and the (modified) exchange operator reads:
\begin{equation*}
  \tilde{\Pi} =
  \begin{bmatrix}
      \left(\mT_{\Gamma} + \mT_{\Omega}\right)^{-1} & 0 \\
      0 & \left(\mT_{\Gamma} + \mT_{\Omega}\right)^{-1}
  \end{bmatrix}
  \cdot
  \begin{bmatrix}
      \mT_{\Gamma} - \mT_{\Omega} & 2 \mT_{\Omega} \\
      2 \mT_{\Gamma} & \mT_{\Omega} - \mT_{\Gamma}
  \end{bmatrix}.
\end{equation*}


We choose \(\mA_{\Gamma}\) as one of the FEM-BEM couplings given in Section~\ref{FEMBEMCoupling}. Invertibility of \(\mA_{\Gamma} -i \mB_{\Gamma}^* \mT_{\Gamma} \mB_{\Gamma}\) is ensured because we have assumed that problem~\eqref{IntialBVP2} is well-posed;~so \(\kappa^2\) is not an eigenvalue of the Laplacian problem in \(\mathbb{R}^d \setminus \overline{\Omega_{\text{B}}}\) with homogeneous Dirichlet boundary conditions (see~\cite{BoisneaultBonazzoliEtAl2025SRS} for a proof and the expressions of the kernel of \(\mA_{\Gamma} -i \mB_{\Gamma}^* \mT_{\Gamma} \mB_{\Gamma}\) when \(\kappa\) is a spurious resonance).


\begin{defn}\label{dfn:transmission_choices}\quad\\
  The simulations are run for several configurations of transmission operators, namely:
  \begin{itemize}
    \item Configuration \DespresDespres, for which \(\mT_{\Omega} = \mT_{\Gamma} = \mT_{D,\Gamma}\),
    \item Configuration \SchurSchur, for which \(\mT_{\Omega} = \mT_{\Gamma} = \mT_{S,\Omega}\),
    \item Configuration \YukawaYukawa, for which \(\mT_{\Omega} = \mT_{\Gamma} = \mT_{Y,\Gamma}\),
    \item Configuration \SchurYukawa, for which \(\mT_{\Gamma} = \mT_{Y,\Gamma}\) and \(\mT_{\Omega} = \mT_{S,\Omega}\).
  \end{itemize}
  The configuration names refer to the subdomains, from the outside to the inside, that is, the first letter refers to the BEM subdomain, and the second one to the FEM subdomain.
  The thin layer needed to define \(\mT_{S,\Omega}\) is considered inside \(\Omega\). The operator \(\mT_{Y,\Gamma}\) is defined by taking \(\Omega_{\text{B}}\) as the \emph{interior} domain (see Section~\ref{FEMBEMCoupling}).
  Finally, note that the last three configurations only involve \emph{non-local} transmission operators.
\end{defn}

For the \DespresDespres, \SchurSchur{} and \YukawaYukawa{} configurations,
the same transmission operator is chosen for both domains.
Then, \(\tilde{\Pi}\) becomes the simple swap operator
$(p,q)\mapsto (q,p)$ and we recover the setup of classical OSMs.
For the \SchurSchur, \YukawaYukawa{} and \SchurYukawa{} configurations, all transmission operators satisfy the \(h\)-uniform stability Assumption~\eqref{Assumption4}. Then, according to \cref{thm:richardson_upper_bound}, the convergence rate \(\tau_h\) is \(h\)-independent.
This result does not hold for the \DespresDespres{} configuration, for which an estimate similar to the one of Theorem~\ref{thm:richardson_upper_bound} exists, but with \(\tau_h\) asymptotically behaving like \(1-\mathcal{O}(h^2)(h \to 0)\), as shown in~\cite[Remark~12.3]{MR4433119}. We emphasize that the geometric convergence arises for the \DespresDespres{} configuration because we study a discrete problem, whereas it is known that at the continuous level the convergence is not geometric, see~\cite{CollinoGhanemiEtAl2000DDM}.

\paragraph{Matrix properties}
Once discretized, classical FEM-BEM couplings~\eqref{IntialBVP2} for the Helmholtz equation lead to block matrices where some blocks are sparse and others are dense. Mixing sparse and dense blocks can be particularly difficult to deal with for numerical solver, the choice of a software to perform an efficient LU factorization being not natural.
In what follows, we highlight that depending on the choice of the transmission operators, the GOSM can also suffer from this issue, or not, when solving local problems.
\begin{itemize}
  \item[\SchurSchur{}] The local BEM problem \(\mA_{\Gamma} -i \mB_{\Gamma}^* \mT_{\Gamma} \mB_{\Gamma}\) mixes a sparse block and dense blocks.
  \item[\YukawaYukawa{}] The local FEM problem mixes a dense block from the transmission operator and sparse blocks.
  \item[\SchurYukawa{}] Transmission operators associated with each subdomain are non-local, and of the ``type of their local subproblem'', \textit{i.e.}~\(\mA_{j}\) and \(\mathcal\mT_{j}\) are both either sparse or dense. Thus, local problems do not mix dense and sparse blocks. Yet, transmission operators are distinct, meaning that even for a single interface the exchange operator \(\tilde{\Pi}\) is not just the swap operator, and mixes dense and sparse blocks. The difference with the previous configurations is that applying \(\tilde{\Pi}\) means solving a positive definite problem. So we can use a CG solver and only use matrix vector product of the \emph{block diagonal} operator \(\mT\), without mixing in practice dense and sparse blocks.
\end{itemize}

\begin{rem}\quad\\
  Avoiding mixing dense and sparse blocks also mean that in practice, one can use different optimized code for local FEM and BEM problems. For instance, one could use one-level or two-level preconditioners well-suited for FEM problems (see, e.g.,~\cite{LahayeTangEtAl2017MSH}), while using analytic preconditioners well-suited for BEM problems (see, e.g.,~\cite{AntoineBoubendir2008IPS}).
  This is one of the motivations in using ``weak''/substructured FEM-BEM couplings, see~\cite{Caudron2018CFB,CaudronAntoineEtAl2020OWC}.
\end{rem}

\subsubsection{Homogeneous problem}\label{subsubsec:homogenous_fem_bem}

For the first experiment, the whole space \(\mathbb{R}^2\) is assumed
homogeneous, \textit{i.e.} \(\kappa=k > 0\) constant, except for an
impenetrable disk \(\mathcal{D}\) of radius \(1\).  We consider the problem
of the scattering of an incoming plane wave \(u_{i}(r,\theta) = \exp(\imath
  \kappa \, r \cos(\theta))\), where \((r,\theta)\) are the polar
coordinates, by the obstacle \(\mathcal{D}\). Therefore, we solve the
Dirichlet problem
\begin{equation}\label{pbm:fem-bem_dirichlet}
  \begin{cases}
    \Delta u_D + \kappa^2 u_D = 0, 
    & \text{in } \mathbb{R}^2 \setminus \mathcal{D}, \\
    u_D(r,\theta) = - u_i(r, \theta), & \text{on \(\partial \mathcal{D}\)}, \\
    +  \text{ Sommerfeld's condition} &\text{\eqref{SommerfeldRadiationCondition}}, 
  \end{cases}
\end{equation}
whose unique solution is \(u_D(r, \theta) \coloneqq -\sum_{p \in
  \mathbb{Z}} \imath^{\lvert p \rvert}\exp(\imath p \theta)
J_{\lvert p \rvert}(\kappa) H^1_{\lvert p \rvert}(\kappa
r)/H^1_{\lvert p \rvert}(\kappa ) \), with \(J_{\nu}\) the Bessel function of first kind
of order \(\nu\) and \(H^1_{\nu}\) the Hankel function of first kind of
order \(\nu\), see e.g.~\cite[Chap.10]{MR2723248}.

In order for the problem to fit the framework of Problem~\eqref{IntialBVPb} and Equation~\eqref{IntialBVP2}, we introduce, as an artificial interface, a circle \(\Gamma\) of radius \(r=2\), and denote by \(\Omega\) the bounded domain whose boundary is composed of \(\Gamma\) and \(\partial \mathcal{D}\), see Figure~\ref{fig:fem_bem_homogeneous_configuration}.
Even though \(\Omega\) is homogeneous, we discretize it using finite elements.
In this experiment, boundary conditions on \(\partial \mathcal{D}\) are imposed strongly, that is, directly in the \(\Omega\) local problem, see Remark~\ref{rem:weakstrongBC}. In particular, for this geometry, and only for this geometry, \(\Gamma_{\Omega} = \Gamma \subsetneq \partial \Omega\), instead of \(\partial \Omega\) as explained in Section~\ref{sec:transmission_operator}. Thus, the transmission conditions are only set on \(\Sigma = \Gamma\), and so is the skeleton problem~\eqref{IntialBVP6}.


\begin{figure}[ht]
  \centering
  \begin{subfigure}{0.48\textwidth}
    \centering
    \includegraphics[scale=1]{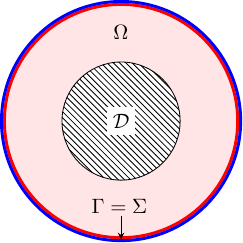} 
    \caption{Geometry considered for problem~\eqref{pbm:fem-bem_dirichlet}. \(\Sigma\)~is the skeleton}\label{fig:fem_bem_homogeneous_configuration}
  \end{subfigure}
  ~
  \begin{subfigure}{0.48\textwidth}
    \centering
    \includegraphics[scale=0.15]{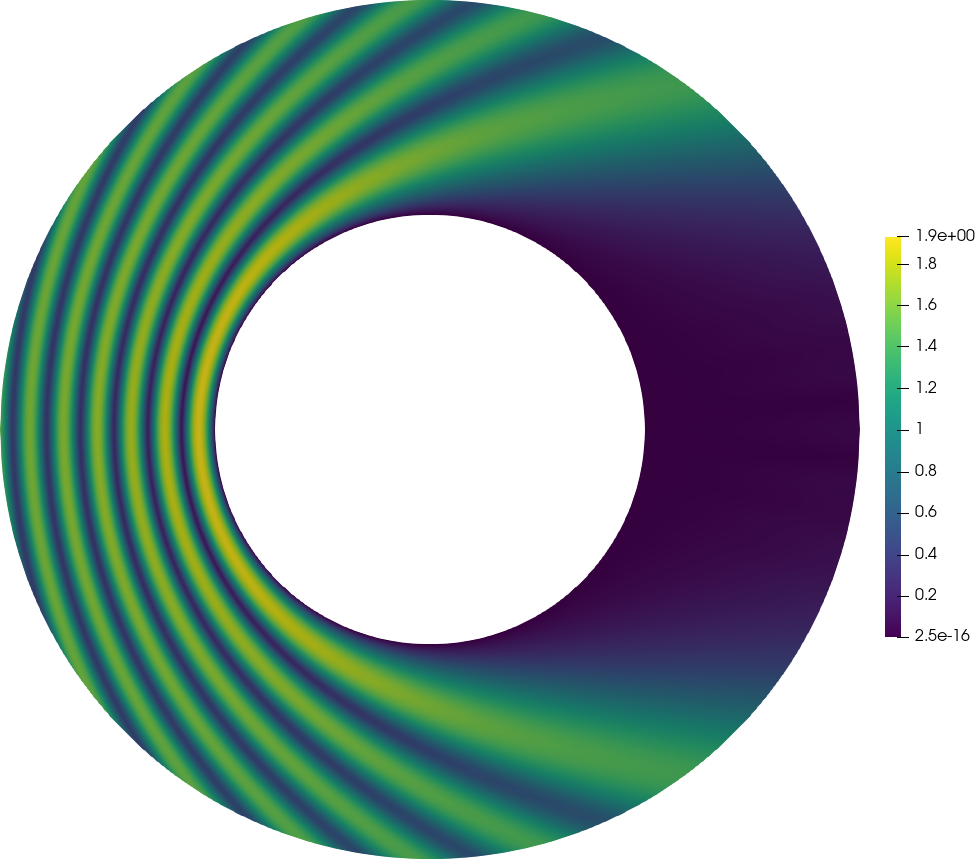} 
    \caption{Amplitude of the total field, for \SchurYukawa{} configuration, Costabel coupling and \(\kappa = 20\), \(N_{\lambda} = 40\)}\label{fig:strong_fem_bem_visual}
  \end{subfigure}
  \caption{Homogeneous test case, which has an explicit expression of the exact solution}\label{fig:homogen}
\end{figure}

\paragraph{Numerical results}
We denote by \(N_{\lambda} \coloneqq (2 \pi)/(\kappa h)\) the number of points per wavelength. 
Figure~\ref{fig:strong_fem_bem_visual} shows the modulus of the total field for \(\kappa = 20\) and \(N_{\lambda} = 40\).
We report in Tables~\ref{table:fem-bem-heterogeneous-Npwl_nb_it} and~\ref{table:fem-bem-heterogeneous-kappa_nb_it} the number of iterations \(N_{it}\) needed to reach convergence, respectively for varying \(N_{\lambda}\) and \(\kappa\).
For both studies, we observe that the number of iterations is roughly the same for Costabel, Johnson-Nédélec or Bielak-MacCamy couplings, even though \cref{thm:richardson_upper_bound} only guarantees convergence when the first one is used.
Thus, in what follows, we choose to focus on the Costabel coupling, but emphasize that all the numerical conclusions we draw are also valid for the two other FEM-BEM couplings.
The number of degrees of freedom of each simulation can be found in Table~\ref{table:dof_homogeneous}.

\begin{table}[t]
  \resizebox{\textwidth}{!}{
    \begin{tabular}{clcccc}
      \toprule
      \({N_{\lambda}}\) & \bfseries{FEM-BEM coupling} &  \bfseries{\DespresDespres} &  \bfseries{\YukawaYukawa} &  \bfseries{\SchurSchur} &  \bfseries{\SchurYukawa} \\ \midrule
      \multirow{3}{*}{30} & Costabel        & 5680 & 138 & 134 & 83 \\
                          & Johnson-Nédélec & 5857 & 137 & 132 & 83 \\ 
                          & Bielak-MacCamy  & 5858 & 137 & 131 & 85 \\ \midrule
      \multirow{3}{*}{40} & Costabel        & 9393 & 137 & 132 & 81 \\
                          & Johnson-Nédélec & 9917 & 136 & 130 & 86 \\ 
                          & Bielak-MacCamy  & 9991 & 136 & 127 & 82 \\ 
      \bottomrule
      \end{tabular}
  \begin{tabular}{clcccc}
    \toprule
    \({N_{\lambda}}\) & \bfseries{FEM-BEM coupling} &  \bfseries{\DespresDespres} &  \bfseries{\YukawaYukawa} &  \bfseries{\SchurSchur} &  \bfseries{\SchurYukawa} \\ \midrule
    \multirow{3}{*}{50} & Costabel        & 13830 & 134 & 130 & 80 \\
                        & Johnson-Nédélec & 14946 & 134 & 134 & 82 \\ 
                        & Bielak-MacCamy  & 14944 & 134 & 127 & 79 \\ \midrule
    \multirow{3}{*}{60} & Costabel        & 18957 & 135 & 128 & 78 \\
                        & Johnson-Nédélec & 20994 & 133 & 128 & 80 \\ 
                        & Bielak-MacCamy  & 21052 & 133 & 126 & 81 \\ 
    \bottomrule
    \end{tabular}
  }
  \caption{Several \(N_{it}\) used to draw~\figpart{fig:fem_bem_strong-Dirichlet}{left}, for \(\kappa = 10\) and \(h\kappa = 2\pi/N_{\lambda}\). See Definition~\ref{dfn:transmission_choices} for transmission operators choices}\label{table:fem-bem-heterogeneous-Npwl_nb_it}
\end{table}

\begin{table}[t]
  \resizebox{\textwidth}{!}{
    \begin{tabular}{@{}clcccc@{}}
      \toprule
      \({\kappa}\) & \bfseries{FEM-BEM coupling} & \bfseries{\DespresDespres} &  \bfseries{\YukawaYukawa} &  \bfseries{\SchurSchur} &  \bfseries{\SchurYukawa} \\ \midrule
      \multirow{3}{*}{6}  & Costabel        & 4343 & 146 & 79 & 69 \\
                          & Johnson-Nédélec & 4389 & 147 & 79 & 68 \\ 
                          & Bielak-MacCamy  & 4344 & 146 & 79 & 68 \\ \midrule
      \multirow{3}{*}{12} & Costabel        & 7558 & 141 & 104 & 72 \\
                          & Johnson-Nédélec & 7959 & 140 & 102 & 71 \\ 
                          & Bielak-MacCamy  & 7937 & 140 & 102 & 68 \\ 
      \bottomrule
      \end{tabular}
  \begin{tabular}{@{}clcccc@{}}
    \toprule
    \({\kappa}\) & \bfseries{FEM-BEM coupling} &  \bfseries{\DespresDespres} &  \bfseries{\YukawaYukawa} &  \bfseries{\SchurSchur} &  \bfseries{\SchurYukawa} \\ \midrule
    \multirow{3}{*}{18} & Costabel        & 10567 & 135 & 121 & 78 \\
                        & Johnson-Nédélec & 11071 & 135 & 124 & 79 \\ 
                        & Bielak-MacCamy  & 11152 & 134 & 125 & 78 \\ \midrule
    \multirow{3}{*}{24} & Costabel        & 13221 & 131 & 140 & 85 \\
                        & Johnson-Nédélec & 14107 & 131 & 144 & 89 \\ 
                        & Bielak-MacCamy  & 14262 & 132 & 140 & 87 \\ 
    \bottomrule
    \end{tabular}
  }
  \caption{Several \(N_{it}\) used to draw~\figpart{fig:fem_bem_strong-Dirichlet}{right}, for \(h^2 \kappa^3 = (2 \pi/10)^2\). See Definition~\ref{dfn:transmission_choices} for transmission operators choices}\label{table:fem-bem-heterogeneous-kappa_nb_it}
\end{table}

\begin{rem}\quad\\
  Recovering almost the same number of iterations for the three considered
  FEM-BEM couplings is not a surprise. Indeed, computing explicitly, at the
  continuous level, the operator \(\mS_{\Gamma}\) for all three couplings
  leads to the common expression \(\mS_{\Gamma} =
  (\Id/2+\tilde{\mathcal{K}}_{\Gamma}+ \imath
  \mT_{\Gamma}\mathcal{V}_{\Gamma})^{-1}
  (\Id/2+\tilde{\mathcal{K}}_{\Gamma} - \imath
  \mT_{\Gamma}\mathcal{V}_{\Gamma})\). This is nothing else than the
  outgoing-ingoing impedance-to-impedance operator of the local BEM
  problem. The proof relies on Calderón identities~\cite[Proposition
    3.6.4.]{MR2743235}. Although, in a discretized
  setting, scattering operators do not coincide anymore (because Calderón identities no more hold),
  this remark explains the similarities in
  Tables~\ref{table:fem-bem-heterogeneous-Npwl_nb_it}-\ref{table:fem-bem-heterogeneous-kappa_nb_it}.
\end{rem}

We begin by studying the \(h\)-dependence of each configuration of Definition~\ref{dfn:transmission_choices} with \(\kappa = 20\) and increasing the value of \(N_{\lambda}\), which consists in refining the mesh.

The evolution of the number of iterations is presented in \figpart{fig:fem_bem_strong-Dirichlet}{left}.
As expected from Theorem~\ref{thm:richardson_upper_bound}, the number of iteration does not grow for the configurations involving non-local operators, that is to say \SchurSchur, \YukawaYukawa{} and \SchurYukawa.
The curve associated with the local \DespresDespres{} configuration evolves asymptotically as \(\mathcal{O}(h^{1.69})\), which is quasi-quadratic, as expected by the discussion led after Definition~\ref{dfn:transmission_choices}.

We now turn to \figpart{fig:fem_bem_strong-Dirichlet}{right} that reports how the number of iterations evolves when \(\kappa\) grows. To avoid numerical pollution~---~see~\cite{IhlenburgFEA,IhlenburgBabuska1995FES,BabuskaSauter1997IPE,DeraemaekerBabuskaEtAl1999MDL}~---, we take \(N_{\lambda}=10\) at \(\kappa = 1\), and then refine the mesh with \(h^2 \kappa^3 = (2\pi/10)^2\). We point out that the operator \(\mT_{S,\Omega}\) involved in the \SchurSchur{} and \SchurYukawa{} configurations is different for each simulation, since the layer thickness \(\delta\) is inversely proportional to \(\kappa\). The relative errors for each problem are around one percent. 
It appears that \(N_{it}\) grows sublinearly for \DespresDespres{} and \SchurSchur{} configurations, while it is quasi-constant for the \YukawaYukawa{} one. For the \SchurYukawa{} configuration, it seems that the behavior is a mix of those of \SchurSchur{} and \YukawaYukawa. Indeed, while \(\kappa \lesssim 13\) the number of iterations is quasi-constant, and once this value is reached, \(N_{it}\) grows sublinearly.

\begin{figure}[!ht]
  \includegraphics[scale=1]{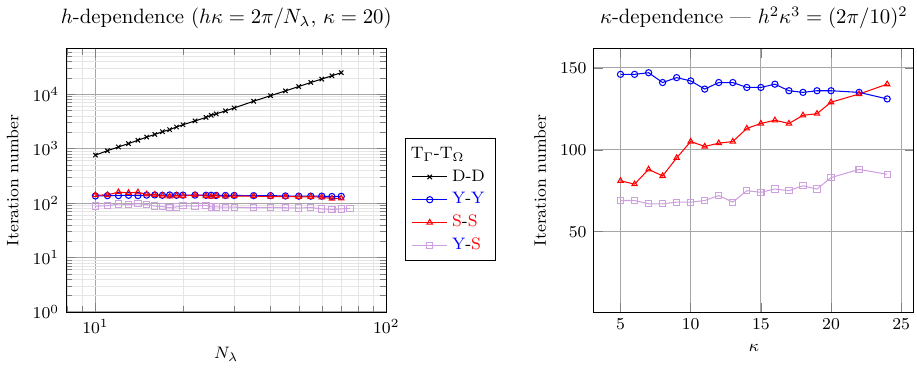}
  \caption{Homogeneous problem with strongly imposed Dirichlet boundary conditions and the Costabel coupling. Left, \(N_{it}\) with respect to \(N_{\lambda}\) for \(\kappa=20\) and \(h\kappa = 2\pi/N_{\lambda}\). Right, \(N_{it}\) with respect to the wavenumber \(\kappa\), for \(h^2\kappa^3 = (2\pi/10)^2\)}\label{fig:fem_bem_strong-Dirichlet}
\end{figure}


Finally, we have also led both experiences when the Dirichlet (sound-soft) boundary condition in Equation~\eqref{pbm:fem-bem_dirichlet} is replaced by the Neumann (sound-hard) boundary condition. Then, we solve
\begin{equation}\label{pbm:fem-bem_neumann}
  \begin{cases}
    \Delta u_N + \kappa^2 u_N = 0, 
    & \text{in } \mathbb{R}^2 \setminus \mathcal{D}, \\
     \gamma_N(u_N)(r,\theta) = - \gamma_N(u_i)(r,\theta), & \text{on \(\partial \mathcal{D}\)}, \\
    +  \text{ Sommerfeld's condition} &\text{\eqref{SommerfeldRadiationCondition}}, 
  \end{cases}
\end{equation}
whose unique solution is
\begin{equation*}
u_N(r, \theta) \coloneqq -\sum_{p \in \mathbb{Z}} \imath^{\lvert p \rvert}
\frac{\lvert p \rvert J_{\lvert p \rvert}(\kappa) - \kappa \, J_{\lvert p \rvert+1}(\kappa)}{\lvert p \rvert H^1_{\lvert p \rvert}(\kappa) - \kappa \, H^1_{\lvert p \rvert+1}(\kappa)} H^1_{\lvert p \rvert}(\kappa r)\exp(i p \theta).
\end{equation*}
The conclusions are the same as the ones drawn with Dirichlet boundary conditions, see~\cref{fig:fem_bem_strong-Neumann}.



\begin{figure}[!ht]
  \includegraphics[scale=1]{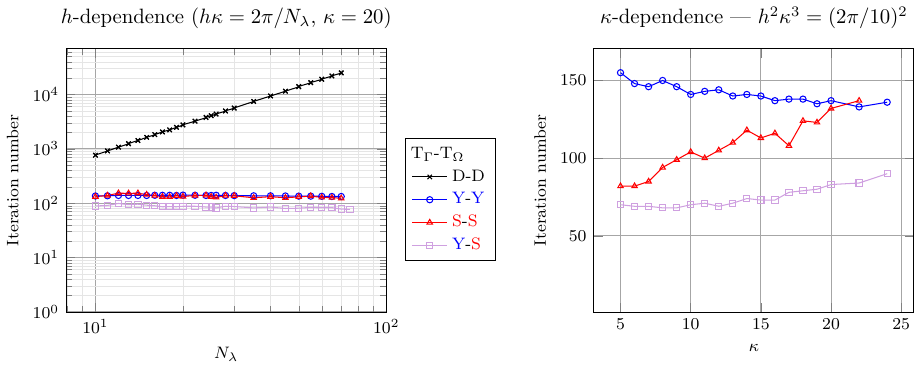}
  \caption{Homogeneous problem with strongly imposed Neumann boundary conditions and the Costabel coupling. Left, \(N_{it}\) with respect to \(N_{\lambda}\) for \(\kappa=20\) and \(h\kappa = 2\pi/N_{\lambda}\). Right, \(N_{it}\) with respect to the wavenumber \(\kappa\), for \(h^2\kappa^3 = (2\pi/10)^2\)}\label{fig:fem_bem_strong-Neumann}
\end{figure}

\subsubsection{Heterogeneous problem}\label{sssec:hom_toy_pb}

Keeping the same incident wave \(u_{i}(r,\theta) = \exp(i k \, r
  \cos(\theta))\), we now consider a heterogeneous acoustic lens: the
wavenumber is defined by \(\kappa(r, \theta)^2 = k^2 \ \eta(r, \theta)\),
with \(k > 0\) and
\begin{align*}
  \eta(r, \theta) \coloneqq 
  \begin{cases}
    1 &\text{if } r \geq 1\\
    \dfrac{2}{1+r^2} &\text{otherwise}
  \end{cases},
\end{align*}
meaning that the lens is a disk of radius \(r=1\).
We solve Problem~\eqref{IntialBVPb}, for which the FEM subdomain \(\Omega\) is a disk of radius \(r=2\), whose boundary is \(\Gamma\), and the BEM subdomain is its complement \(\mathbb{R}^d \setminus \overline{\Omega}\), see Figure~\ref{fig:heterogeneous_lens}. Following the observations made for the previous experiment, we focus on the Costabel coupling.

\begin{figure}[!ht]
  \begin{subfigure}{0.48\textwidth}
    \centering
    \includegraphics[scale=1]{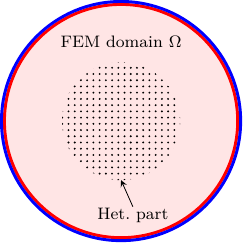}
  \end{subfigure}
  ~
  \begin{subfigure}{0.48\textwidth}
    \centering
    \includegraphics[scale=0.15]{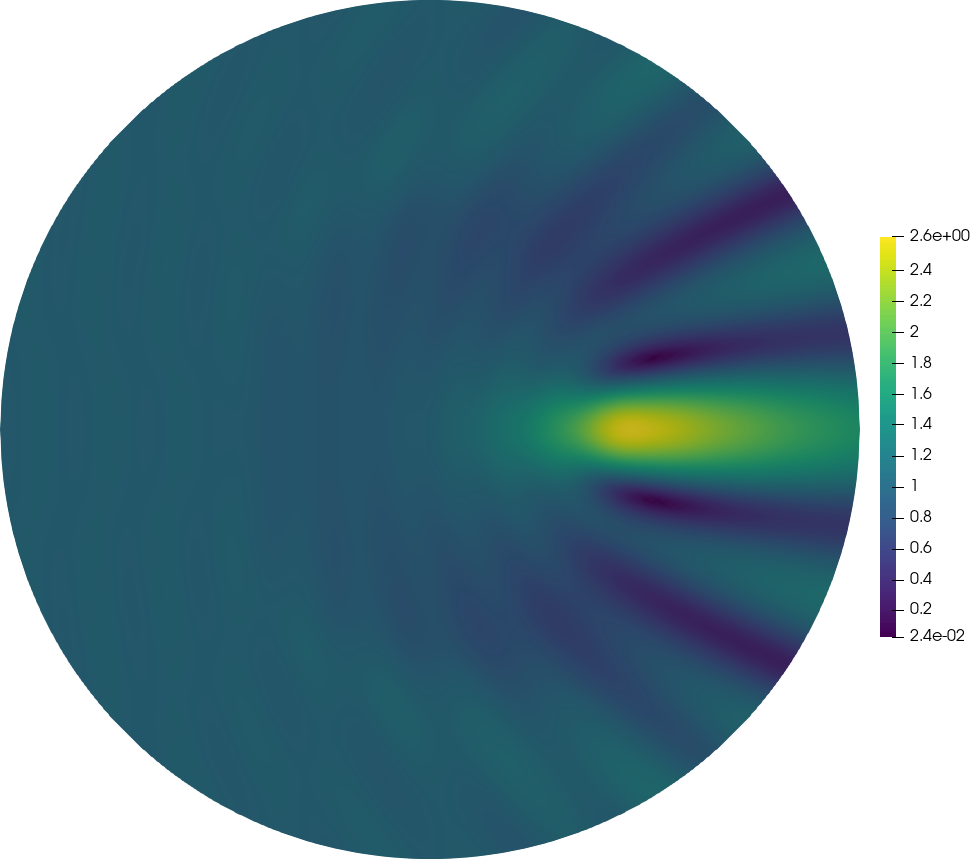}
  \end{subfigure}
  \caption{FEM subdomain \(\Omega\) with a heterogeneous pocket (left). Amplitude of the total field, for \SchurYukawa{} configuration, Costabel coupling and \(k = 10\), \(N_{\lambda} = 20\) (right)}\label{fig:heterogeneous_lens}
\end{figure}

Letting \(N_{\lambda}\) vary while taking \(k = 10\), the results are presented in \figpart{fig:fem-bem_heterogeneous}{left}. The observations are similar to the previous experiment of Section~\ref{subsubsec:homogenous_fem_bem}, and coherent with Theorem~\ref{thm:richardson_upper_bound}. Indeed, the number of iterations for \DespresDespres{} configuration evolves asymptotically as \(\mathcal{O}(h^{1.78})\), which is quasi-quadratic, while for the three non-local configurations \YukawaYukawa, \SchurYukawa{} and \SchurSchur, \(N_{it}\) is quasi-constant.

We now turn to \figpart{fig:fem-bem_heterogeneous}{right}, where it is shown how \(N_{it}\) evolves when \(k\) grows (numerical pollution is avoided in the same way as for the homogeneous problem).
We observe asymptotically a sublinear growth for \DespresDespres{} and \SchurSchur{}, respectively \(\mathcal{O}(k^{0.86})\) and \(\mathcal{O}(k^{0.4})\), while for \YukawaYukawa{} configuration, \(N_{it}\) is quasi-constant. 
For the \SchurYukawa{} configuration, the evolution of the number of iterations looks like having two states, as for the previous geometry: for \(\kappa \leq 15\) it is quasi-constant, and then it grows sublinearly. We interpret this behavior as a mix of those of \YukawaYukawa{} and \SchurSchur{} configurations. Additionally, we observe that for \(k=15\) and \(22\) the number of iterations of \SchurSchur{} has `bumps', which are also found for the \SchurYukawa{} configuration.



\begin{figure}[!ht]
    \includegraphics[scale=1]{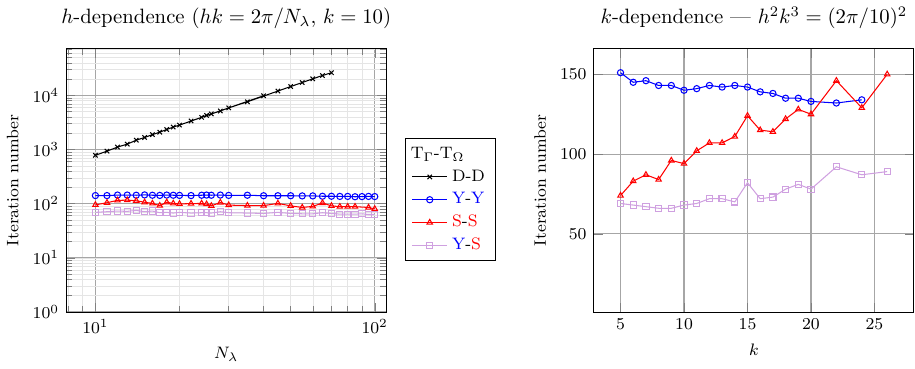}
  \caption{Heterogeneous problem with the Costabel coupling. Left, \(N_{it}\) with respect to \(N_{\lambda}\) for \(k=10\) and \(hk = 2\pi/N_{\lambda}\). Right, \(N_{it}\) with respect to \(k\) with \(h^2k^3 = (2\pi/10)^2\). The number of degrees of freedom of each simulation can be found in Table~\ref{table:dof_heterogeneous}}\label{fig:fem-bem_heterogeneous}
\end{figure}

\subsubsection{Conclusions}
\label{ssec:conclusionNum1}

Overall, for the two material settings the worst number of iterations is obtained with \DespresDespres{} configuration, whether when refining the mesh or for increasing \(\kappa\), and the \SchurYukawa{} configuration is the one for which \(N_{it}\) is the smallest. We remind that, in the latter configuration, each transmission operator is of the same type as its respective subproblem, see the discussion in the introduction of Section~\ref{ssec:fem-bem-strong}. 

Besides, note that configurations such as \SchurSchur{} and \YukawaYukawa{}, which involve the same transmission operator on a boundary shared by two subdomains \(\Omega_j\) and \(\Omega_k\), can only be considered when either \(\Gamma_j \subset \Gamma_k\) or \(\Gamma_k \subset \Gamma_j\). This property is not satisfied for partitions with cross-points, which are typically obtained with automatic graph partitioners. Hence, it is more realistic to use configurations such as the \SchurYukawa{} one, for which the transmission operator \(\mT_j\) associated with a domain \(\Omega_j\) is defined independently of its neighbouring domains.

\subsection{FEM-BEM coupling and weakly imposed boundary conditions}\label{ssec:fem-bem-weak}
Here, the space \(\RR^2\) is decomposed into three domains: \(\Omega_{\text{B}}\) homogeneous and unbounded, \(\Omega\) bounded and possibly heterogeneous, and \(\Omega_{\text{O}}\) impenetrable, see Figure~\ref{fig:fem-bem_weak_config}, and
we solve problems of the form~\eqref{eq:generalBVP}. Their reformulation~\eqref{eq:varfgeneralBVP} involves the operators \(\mathcal{A}_{\Gamma_{\text{B}}}\), \(\mathcal{A}_{\Omega}\) and \(\mathcal{A}_{\Gamma_{\text{O}}}\), where \(\mathcal{A}_{\Gamma_{\text{B}}}\) is one of the FEM-BEM couplings introduced in Section~\ref{FEMBEMCoupling}, while \(\mathcal{A}_{\Gamma_{\text{O}}}\) corresponds to one of the boundary condition operators of Section~\ref{sec:pbs}.

The goal of this section is to study the impact of the transmission operator chosen for \(\Gamma_2\) on the number of iterations. Since the skeleton \(\Sigma\) is now composed of the union of several boundaries, we add a subscript to each transmission operator in order to indicate the transmission boundary on which it is defined. Following the conclusions drawn in Section~\ref{ssec:conclusionNum1}, we choose \(\mT_{\text{B}} = \mT_{Y,\Gamma_{\text{B}}}\) and \(\mT = \mT_{S, \Omega}\). 
The transmission operator \(\mT_{\text{O}}\) is chosen among \(\mT_{Y,\Gamma_{\text{O}}}\), \(\mT_{S, \Omega_{\text{O}}}\) and \(\mT_{D, \Gamma_{\text{O}}}\). 
Using the same conventions as Definition~\ref{dfn:transmission_choices}, we denote the three possible configurations respectively \YukawaSchurYukawa, \YukawaSchurSchur{} and \YukawaSchurDespres.
We also consider a fourth choice named \DespresDespresDespres{} for which \(\mT_{\cdot} = \kappa\, \Id\) on each boundary, mainly to compare it with \YukawaSchurDespres.
Note that \(\mT_{Y,\Gamma_{\text{B}}}\) and \(\mT_{Y,\Gamma_{\text{O}}}\) are respectively defined by considering \(\Omega_{\text{B}}\) and \(\Omega_{\text{O}}\) as \emph{interior} domain (see Section~\ref{FEMBEMCoupling}), while the thin layer needed to define \(\mT_{S, \Omega}\) and \(\mT_{S, \Omega_{\text{O}}}\) are respectively taken inside \(\Omega\) and \(\Omega_{\text{O}}\).

\subsubsection{Homogeneous problem with weakly imposed boundary conditions}\label{sssec:fem-bem-weak-toy}

We consider the same problems~\eqref{pbm:fem-bem_dirichlet} and~\eqref{pbm:fem-bem_neumann} as in Section~\ref{subsubsec:homogenous_fem_bem}, but now the Dirichlet or Neumann boundary conditions are weakly imposed, see Remark~\ref{rem:weakstrongBC}. Thus, \(\Gamma_{\text{O}}\) has to be understood as a subdomain too, and is part of the skeleton \(\Sigma\), see Figure~\ref{fig:fem-bem_weak_config}. We emphasize that transmission conditions now also arise on \(\Gamma_{\text{O}}\), meaning that the boundary of \(\Omega\) involved in the GOSM is larger than the one of Section~\ref{ssec:fem-bem-strong}.



\begin{figure}[!ht]
  \centering
      \includegraphics[scale=1]{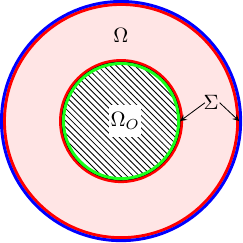}
  \caption{Skeleton \(\Sigma\) for FEM-BEM simulations with weakly imposed boundary conditions on \(\Gamma_2\). \(\Gamma_{\text{B}}\) is the blue circle, \(\Gamma_{\text{O}}\) is the green circle, and \(\partial \Omega = \Sigma = \Gamma_{\text{B}} \cup \Gamma_{\text{O}}\) is the union of the two red circles}\label{fig:fem-bem_weak_config}
\end{figure}

We start by studying the Dirichlet problem for a fixed wavenumber while \(N_{\lambda}\) varies;~results are in \figpart{fig:fem-bem_weak-Npwl_dependence}{left}. As expected, when \(\mT_{D,\cdot}\) is used for at least one domain (\YukawaSchurDespres{} and \DespresDespresDespres), we observe a quasi-quadratic growth of the number of iterations. Moreover, \(N_{it}\) is smaller with \YukawaSchurDespres{} configuration, in which only \(\mT_{D,\Gamma_{\text{O}}}\) is considered, than with \DespresDespresDespres{} configuration.
Nonetheless, the number of iterations is greater than the configurations involving only non-local operators: for \YukawaSchurYukawa{} and \YukawaSchurSchur{}, \(N_{it}\) remains quasi-constant, which is coherent with Theorem~\ref{thm:richardson_upper_bound}.
The same behaviors are observed when considering Equation~\eqref{pbm:fem-bem_neumann} instead, except for the \YukawaSchurDespres{} configuration where \(N_{it}\) is also quasi-constant.
One possible explanation is the following remark: the unknown \(p\) associated with \(\Gamma_{\text{O}}\) is theoretically equal to \(0\) (see~\cite[Example~3.2]{MR4665035}), and numerically has converged to \(0\) after one iteration. This might explain why the operator chosen on \(\Gamma_{\text{O}}\) has an impact on the number of iterations, but not on its behavior when the mesh is refined.

Turning to the \(\kappa\)-dependence results shown in \figpart{fig:fem-bem_weak-Npwl_dependence}{right}, we only study the configurations for which we have not observed any growth for \(N_{it}\) in the previous paragraph. For the Dirichlet problem, \(N_{it}\) seems quasi-constant, while for the Neumann problem, it grows sublinearly, as it was the case when considering strongly imposed boundary conditions, see \figpart{fig:fem_bem_strong-Neumann}{right}. The growth is nevertheless faster. Recovering for the Neumann problem the same behavior as when the boundary condition is strongly imposed might also be due to the artificial unknown \(p\) we introduced.


\subsubsection{Conclusions}\label{sssec:fem-bem-weak-toy-conclusions}

We conclude this study by pointing that, without considering the configurations involving the Després transmission operator, \(N_{it}\) is smaller for \YukawaSchurSchur{} than for \YukawaSchurYukawa{} with a Dirichlet boundary condition, while the opposite is true with a Neumann boundary condition. This indicates that the characterization of what is a good choice of transmission operator for weakly imposed boundary condition might be a matter of interest.


\begin{figure}[!ht]
  \includegraphics[scale=1]{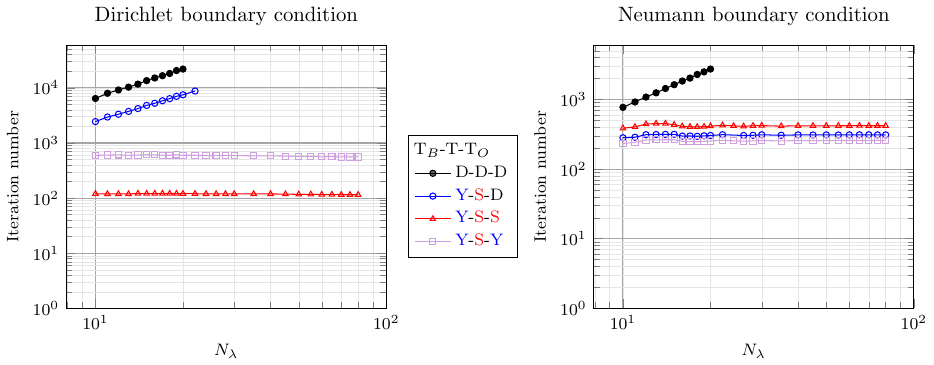} 
  \caption{\(N_{it}\) with respect to \(N_{\lambda}\). Homogeneous problem with weakly imposed boundary conditions (Dirichlet on the left and Neumann on the right), for \(\kappa = 20\), \(h\kappa = 2\pi/N_{\lambda}\), and the Costabel coupling
  }\label{fig:fem-bem_weak-Npwl_dependence}
\end{figure}


\begin{figure}[!ht]
  \includegraphics[scale=1]{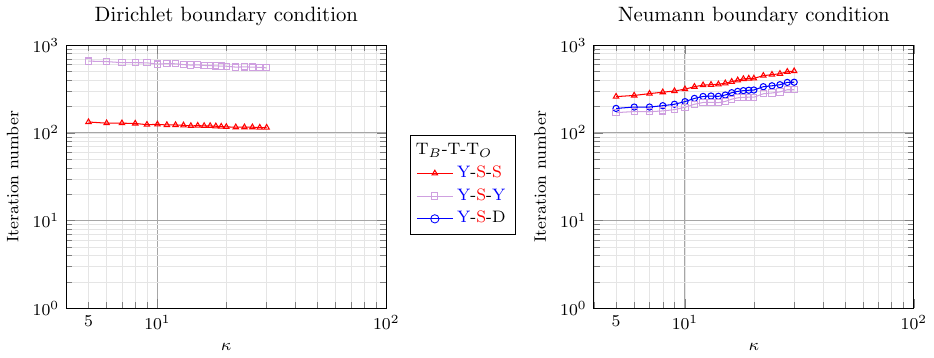}
  \caption{\(N_{it}\) with respect to the wavenumber \(\kappa\). Homogeneous problem with weakly imposed boundary conditions (Dirichlet on the left and Neumann on the right), for \(h^2\kappa^3 = (2\pi/10)^2\) and the Costabel coupling}\label{fig:fem-bem_weak-k_dependence}
\end{figure}

\subsubsection{Rectangular cavity problem: a configuration with cross-points}

We end our numerical studies by considering in \(\mathbb{R}^2\) the
scattering of a plane wave \(u_{i}(x,y) = \exp(i \kappa (x\, \cos(\theta) + y
  \, \sin(\theta)))\) by an open sound-soft rectangular cavity, with an
incident angle \(\theta=4\pi/10\). This is modelled by
Problem~\eqref{eq:generalBVP}, where \(\Omega_{\text{O}}\) is the
rectangular cavity, the FEM subdomain \(\Omega\) is the space inside the
cavity, \(\Omega_{\text{B}} = \mathbb{R}^d \setminus (\overline{\Omega}
\cup \overline{\Omega_{\text{O}}})\). The wavenumber is assumed constant in
\(\mathbb{R}^d \setminus \overline{\Omega_{\text{O}}}\), a Dirichlet
boundary condition \(u = -u_i\) is applied on \(\Gamma_{\text{O}}\), and
the volume source term \(f\) is null.  Note that the domain decomposition
involves cross-points as shown in
Figure~\ref{fig:cross_points-config}. This last numerical experiment aims
to show the robustness to cross-points of the GOSM, even when FEM-BEM
coupling is considered. More precisely, the cavity \(\Omega_{\text{O}}\) is
an \(L_{\text{O}}\times l_{\text{O}}=1.5\times0.6\) open rectangle, while
\(\Omega\) is an \(L\times l=1.3 \times 0.4\) rectangle.  Note that similar
geometries have already been considered
in~\cite{ChandlerWildeGrahamEtAl2012NAB,DoleanMarchandEtAl2025CAG}. We also
emphasize that the idea of ``closing'' a cavity in order to uncouple it
from the rest of the space has been studied in~\cite{Bourguignon2011PMD}
for electromagnetic wave scattering problems.

\begin{figure}[!ht]
  \centering
      \includegraphics[scale=2]{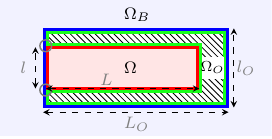}
  \caption{Domain decomposition for the rectangular cavity problem. Cross-points are circled in gray}\label{fig:cross_points-config}
\end{figure}

We emphasize that Problem~\eqref{eq:generalBVP} admits a unique solution for any wavenumber. Nonetheless, due to the open cavity, the problem admits \emph{quasimodes}, i.e., functions that approximately satisfy the homogeneous problem and are spatially localized, see~\cite[Section~5]{DoleanMarchandEtAl2025CAG}.
For a Dirichlet boundary condition, it has been shown in~\cite[Section~5.6.2]{ChandlerWildeGrahamEtAl2012NAB} that a family of quasimodes exists for \(\kappa=k_n = \pi n / l\), \(n \in \mathbb{N}^*\), which are eigenvalues of the Laplacian problem with homogeneous Dirichlet boundary conditions set in \(\Omega\). We will see that the presence of quasimodes induces an increase of the number of iterations.

The subdomains \(\Omega\) and \(\Omega_{\text{O}}\) being of small size, the transmission operators \(\mT_{S, \Omega}\) and \(\mT_{S, \Omega_{\text{O}}}\) are derived from the discretization of a positive \(\DtN\) problem set in the whole domains \(\Omega\) and \(\Omega_{\text{O}}\) respectively, and not in a thin layer along their boundary, contrary to Section~\ref{sec:transmission_operator}.

\begin{figure}[!ht]
  \includegraphics[scale=1]{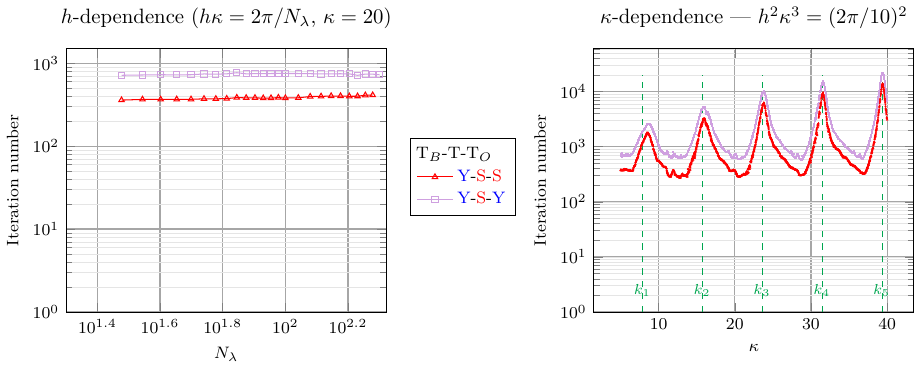}
  \caption{Open rectangular cavity problem with the Costabel coupling. Left, \(N_{it}\) with respect to \(N_{\lambda}\) for \(\kappa=20\) and \(h \kappa = 2\pi/N_{\lambda}\). Right, \(N_{it}\) with respect to \(\kappa\) with \(h^2k^3 = (2\pi/10)^2\). The dashed green lines corresponds to \((k_n)_{n \in \mathbb{N}}\). The number of degrees of freedom of each simulation can be found in Table~\ref{table:dof_cavity}}\label{fig:fem-bem-rectangular_cavity-Dirichlet}
\end{figure}
As in the previous sections, we start by studying the problem for a fixed wavenumber while \(N_{\lambda}\) varies;~results are in \figpart{fig:fem-bem-rectangular_cavity-Dirichlet}{left} for \(\kappa = 20\). In view of the observations drawn in Section~\ref{sssec:fem-bem-weak-toy}, we only study \YukawaSchurSchur{} and \YukawaSchurYukawa{} configurations. We recover the same result, namely \(N_{it}\) is nearly \(h\)-independent for both configurations. Thus, the method appears to be robust to cross-points.

The behavior of the \(\kappa\)-dependence, which can be seen in
\figpart{fig:fem-bem-rectangular_cavity-Dirichlet}{right}, is different
from the one analyzed for the homogeneous problem and its simpler geometry
in Section~\ref{sssec:fem-bem-weak-toy}.  Indeed, several peaks arise
around the wavenumbers \(k_n\), represented by the vertical green dashed
lines. We underline that the peaks are \emph{not} due to some spurious
resonances of BIOs, which are the \(k_{\text{B},n \, m} = \pi
\sqrt{m^2/L_{\text{B}}^2 + n^2/l_{\text{B}}^2}\) for \(m,n\in
\mathbb{N}^*\). In such geometry, this phenomenon is not specific to
our method, and has already been observed for FEM
in~\cite{DoleanMarchandEtAl2025CAG}.  Once again, when Dirichlet boundary
conditions are imposed, we observe that \(N_{it}\) is smaller for
\YukawaSchurSchur{} than for \YukawaSchurYukawa, as highlighted in
\cref{sssec:fem-bem-weak-toy-conclusions} for a geometry without
cross-points.

\begin{appendix}
  \section{Problem sizes for the numerical experiments of Section~\ref{ssec:fem-bem-strong}}
\begin{table}[ht]
  \resizebox{\textwidth}{!}{
  \begin{tabular}{l|cc}
    \multicolumn{1}{c|}{\(N_{\lambda}\)} & \(\Omega\) & \(\Gamma\) \\ \hline
    10                                   & 11,458      & 400        \\
    11                                   & 13,799      & 440        \\
    12                                   & 16,395      & 480        \\
    13                                   & 19,184      & 520        \\
    14                                   & 22,208      & 560        \\
    15                                   & 25,436      & 600        \\
    16                                   & 28,913      & 640        \\
    17                                   & 32,576      & 680        \\
    18                                   & 36,479      & 720        \\
    19                                   & 40,620      & 760        \\
    20                                   & 44,957      & 800        \\
    22                                   & 54,308      & 880        
  \end{tabular}
  \hspace{0.2cm}
  \begin{tabular}{l|cc}
    \multicolumn{1}{c|}{\(N_{\lambda}\)} & \(\Omega\) & \(\Gamma\) \\ \hline
    24                                   & 64,527      & 960         \\
    26                                   & 75,638      & 1,040       \\
    28                                   & 87,645      & 1,120       \\
    30                                   & 100,493     & 1,200       \\
    35                                   & 136,548     & 1,400       \\
    40                                   & 178,104     & 1,600       \\
    45                                   & 225,197     & 1,800       \\
    50                                   & 277,787     & 2,000       \\
    55                                   & 335,858     & 2,200       \\
    60                                   & 399,529     & 2,400       \\
    65                                   & 468,653     & 2,600       \\
    70                                   & 543,254     & 2,800       
  \end{tabular}
  \hspace{1.0cm}
  \begin{tabular}{l|cc}
    \multicolumn{1}{c|}{\(\kappa\)} & \(\Omega\) & \(\Gamma\) \\ \hline
    5                               & 3,688      & 224        \\
    6                               & 6,272      & 294        \\
    7                               & 9,891      & 371        \\
    8                               & 14,636     & 453        \\
    9                               & 20,664     & 540        \\
    10                              & 28,316     & 633        \\
    11                              & 37,479     & 730        \\
    12                              & 48,579     & 832        \\
    13                              & 61,604     & 938        \\
    14                              & 76,798     & 1,048      \\
    15                              & 94,286     & 1,162      \\
    16                              & 114,258    & 1,280      
  \end{tabular}
  \hspace{0.2cm}
  \begin{tabular}{lcc}
    \multicolumn{1}{c|}{\(\kappa\)} & \(\Omega\) & \(\Gamma\) \\ \hline
    \multicolumn{1}{l|}{17}         & 136,924    & 1,402      \\
    \multicolumn{1}{l|}{18}         & 162,515    & 1,528      \\
    \multicolumn{1}{l|}{19}         & 190,949    & 1,657      \\
    \multicolumn{1}{l|}{20}         & 222,450    & 1,789      \\
    \multicolumn{1}{l|}{22}         & 295,758    & 2,064      \\
    \multicolumn{1}{l|}{24}         & 383,709    & 2,352      \\
    &&\\
    &&\\
    &&\\
    &&\\
    &&\\
    &&
  \end{tabular}
  }
  \caption{Number of degrees of freedom in each computational domain of the homogeneous geometry, for the \(h\)-dependence (left) and \(\kappa\)-dependence (right) experiments related to Figures~\ref{fig:fem_bem_strong-Dirichlet} and~\ref{fig:fem_bem_strong-Neumann}}
  \label{table:dof_homogeneous}
\end{table}

\begin{table}[ht]
  \resizebox{\textwidth}{!}{
  \begin{tabular}{l|cc}
    \multicolumn{1}{c|}{\(N_{\lambda}\)} & \(\Omega\) & \(\Gamma\) \\ \hline
    10                                   & 3,814      & 200        \\
    11                                   & 4,608      & 220        \\
    12                                   & 5,458      & 240        \\
    13                                   & 6,387      & 260        \\
    14                                   & 7,410      & 280        \\
    15                                   & 8,479      & 300        \\
    16                                   & 9,637      & 320        \\
    17                                   & 10,875     & 340        \\
    18                                   & 12,168     & 360        \\
    19                                   & 13,545     & 380        \\
    20                                   & 14,986     & 400        \\
    22                                   & 18,115     & 440        \\
    24                                   & 21,508     & 480        \\
    26                                   & 25,215     & 520        \\
    28                                   & 29,207     & 560        
  \end{tabular}
  \hspace{0.2cm}
  \begin{tabular}{l|cc}
    \multicolumn{1}{c|}{\(N_{\lambda}\)} & \(\Omega\) & \(\Gamma\) \\ \hline
    30                                   & 33,499     & 600        \\
    35                                   & 45,517     & 700        \\
    40                                   & 59,379     & 800        \\
    45                                   & 75,065     & 900        \\
    50                                   & 92,592     & 1,000      \\
    55                                   & 111,974    & 1,100      \\
    60                                   & 133,162    & 1,200      \\
    65                                   & 156,221    & 1,300      \\
    70                                   & 181,095    & 1,400       \\
    75                                   & 207,804    & 1,500      \\
    80                                   & 236,372    & 1,600      \\
    85                                   & 266,755    & 1,700      \\
    90                                   & 298,998    & 1,800      \\
    95                                   & 333,058    & 1,900      \\
    100                                  & 368,944    & 2,000     
  \end{tabular}
  \hspace{1.0cm}
  \begin{tabular}{l|cc}
    \multicolumn{1}{c|}{\(k\)} & \(\Omega\) & \(\Gamma\) \\ \hline
    5                               & 4,774      & 224        \\
    6                               & 8,145      & 294        \\
    7                               & 12,904     & 371        \\
    8                               & 19,177     & 453        \\
    9                               & 27,175     & 540        \\
    10                              & 37,270     & 633        \\
    11                              & 49,496     & 730        \\
    12                              & 64,200     & 832        \\
    13                              & 81,516     & 938        \\
    14                              & 101,663    & 1,048      \\
    15                              & 124,892    & 1,162      \\
    16                              & 151,473    & 1,280      \\
    17                              & 181,617    & 1,402      \\
    18                              & 215,631    & 1,528      \\
    19                              & 253,461    & 1,657      \\
  \end{tabular}
  \hspace{0.2cm}
  \begin{tabular}{lcc}
    \multicolumn{1}{c|}{\(k\)} & \(\Omega\) & \(\Gamma\) \\ \hline
    \multicolumn{1}{l|}{20}         & 295,364    & 1,789      \\
    \multicolumn{1}{l|}{22}         & 392,903    & 2,064      \\
    \multicolumn{1}{l|}{24}         & 509,959    & 2,352      \\
    && \\
    && \\
    && \\
    && \\
    && \\
    && \\
    && \\
    && \\
    && \\
    && \\
    && \\
    &&
  \end{tabular}
  }
  \caption{Number of degrees of freedom in each computational domain of the heterogeneous geometry, for the \(h\)-dependence (left) and \(k\)-dependence (right) experiments related to Figure~\ref{fig:fem-bem_heterogeneous}}
  \label{table:dof_heterogeneous}
\end{table}

\begin{table}[ht]
  \resizebox{\textwidth}{!}{
  \begin{tabular}{l|ccc}
    \multicolumn{1}{c|}{\(N_{\lambda}\)} & \(\Gamma_{\text{B}}\) & \(\Omega\) & \(\Gamma_{\text{O}}\) \\ \hline
    30                                   & 405          & 5,818      & 655          \\
    35                                   & 472          & 7,762      & 762         \\
    40                                   & 536          & 10,083     & 868          \\
    45                                   & 604          & 12,818     & 978          \\
    50                                   & 670          & 15,623     & 1,084         \\
    55                                   & 739          & 18,894     & 1,195         \\
    60                                   & 806          & 22,545     & 1,304         \\
    65                                   & 872          & 26,216     & 1,410         \\
    70                                   & 940          & 30,361     & 1,520         \\
    75                                   & 1,006        & 34,993     & 1,628         \\
    80                                   & 1,071        & 39,694     & 1,735         \\
    85                                   & 1,140        & 44,657     & 1,844       
  \end{tabular}
  \hspace{0.2cm}
  \begin{tabular}{l|ccc}
    \multicolumn{1}{c|}{\(N_{\lambda}\)} & \(\Gamma_{\text{B}}\) & \(\Omega\) & \(\Gamma_{\text{O}}\) \\ \hline
    90                                   & 1,205        & 50,170     & 1,951         \\
    95                                   & 1,273        & 55,805     & 2,061         \\
    100                                  & 1,339        & 61,602     & 2,167         \\
    110                                  & 1,476        & 74,847     & 2,388         \\
    120                                  & 1,607        & 88,687     & 2,601         \\
    130                                  & 1,741        & 103,819    & 2,817         \\
    140                                  & 1,875        & 120,519    & 3,035         \\
    150                                  & 2,008        & 137,954    & 3,250         \\
    160                                  & 2,140        & 157,375    & 3,466         \\
    170                                  & 2,276        & 177,208    & 3,684         \\
    180                                  & 2,410        & 199,139    & 3,900         \\
    190                                  & 2,543        & 221,410    & 4,117       
  \end{tabular}
  \hspace{1.0cm}
  \begin{tabular}{l|ccc}
    \multicolumn{1}{c|}{\(\kappa\)} & \(\Gamma_{\text{B}}\) & \(\Omega\) & \(\Gamma_{\text{O}}\) \\ \hline
    5.00                            & 77           & 256        & 125          \\
    6.50                            & 113          & 497        & 183          \\
    \textbf{8.47}                   & 166          & 997        &  268         \\
    9.50                            & 197          & 1,434      & 319          \\
    11.00                           & 247          & 2,181      & 399          \\
    12.50                           & 300          & 3,219      & 484          \\
    14.00                           & 355          & 4,427      & 573          \\
    15.50                           & 410          & 5,960      & 664          \\
    \textbf{16.05}                  & 433          & 6,608      & 701          \\
    18.50                           & 533          & 9,978      & 863          \\
    20.00                           & 601          & 12,604     & 973          \\
    21.50                           & 668          & 15,623     & 1,082     
  \end{tabular}
  \hspace{0.2cm}
  \begin{tabular}{l|ccc}
    \multicolumn{1}{c|}{\(\kappa\)} & \(\Gamma_{\text{B}}\) & \(\Omega\) & \(\Gamma_{\text{O}}\) \\ \hline
    \textbf{23.8}                   & 779          & 20,999     & 1,261        \\
    24.50                           & 814          & 22,949     & 1,316        \\
    26.00                           & 890          & 27,359     & 1,440        \\
    28.00                           & 993          & 33,992     & 1,607        \\
    30.00                           & 1,102        & 41,660     & 1,782        \\
    \textbf{31.59}                  & 1,190        & 48,873     & 1,926        \\
    33.50                           & 1,298        & 58,089     & 2,102        \\
    35.00                           & 1,386        & 66,085     & 2,244        \\
    36.50                           & 1,478        & 75,175     & 2,392        \\
    38.00                           & 1,570        & 84,770     & 2,540        \\
    \textbf{39.39}                  & 1,657        & 94,004     & 2,681        \\
    40.00                           & 1,694        & 98,795     & 2,742         
  \end{tabular}
  }
  \caption{Number of degrees of freedom in each computational domain of the rectangular cavity geometry, for the \(h\)-dependence (left) and \(k\)-dependence (right) experiments related to Figure~\ref{fig:fem-bem-rectangular_cavity-Dirichlet}. In bold, the wavenumbers associated with peaks tops in Figure~\ref{fig:fem-bem-rectangular_cavity-Dirichlet}}
  \label{table:dof_cavity}
\end{table}

\end{appendix}

\bibliography{manuscrit}
\bibliographystyle{plain}

\end{document}